\DeclareMathOperator{\Vol}{Vol}
\DeclareMathOperator{\sign}{sign}
\numberwithin{equation}{section}
\numberwithin{theorem}{section}
\numberwithin{lemma}{section}
\numberwithin{remark}{section}
\numberwithin{corollary}{section}
\numberwithin{example}{section}
\def\subclassname{{\bfseries Mathematics Subject Classification
(2010)}\enspace}
\def\subclass#1{\par\addvspace\medskipamount{\rightskip=0pt plus1cm
\def\and{\ifhmode\unskip\nobreak\fi\ $\cdot$
}\noindent\subclassname\ignorespaces#1\par}}
\journalname{Acta Mathematica Vietnamica}
\begin{document}

\title{Some inequalities for partial derivatives on time scales}


\author{Tran Dinh Phung
}


\institute{ Tran Dinh Phung \at
            PhD student at Department of Mathematics,
Quy Nhon University, Binh Dinh, Vietnam \\
              \email{trandinhphung@qnu.edu.vn; trandinhphung89@gmail.com}  \\
               This work is funded by Vietnam National Foundation for Science and Technology Development (NAFOSTED) under grant number 101.02-2014.32. 
}         

\date{Received: date / Accepted: date}

\maketitle

\begin{abstract}
We first prove some weighted inequalities for compositions of functions on time scales which are in turn applied to establish some new dynamic Opial-type inequalities in several  variables. Some generalizations and applications to partial differential dynamic equations are also considered. 
\keywords{Opial's inequality \and  time scale \and partial differential dynamic equation}
 \subclass{26D15\and 26D10\and 26E70}
\end{abstract}

\section{Introduction}
In  1960, Opial \cite{O} established the following integral inequality
\begin{equation}
\int_0^b|f(x)f'(x)|dx \le \frac{b}{4}\int_0^b |f'(x)|^2dx,
\label{1.1}
\end{equation}
 where $f$ is absolutely continuous on $[0,b]$ such that $f(0)=f(b)=0.$  In 1962, Beesack \cite{B} showed the following result which implies \eqref{1.1}  and is very useful in applications: If $f$ is  absolutely continuous on $[0,b]$ and $f(0)=0$, then 
\begin{equation}\label{eqO2}
\int_0^b|f(x)f'(x)|dx \le \frac{b}{2}\int_0^b |f'(x)|^2dx.
\end{equation}

Since then, many generalizations of  Opial's inequality \eqref{eqO2} in various directions have been given, one of which was given in 1967 by Godunova and Levin  \cite{Go}: Let $F$ be a convex and increasing function on $[0, \infty)$ with $F(0) = 0$, and  $f$ be a real-valued absolutely continuous function defined on $[a,b]$ with $f(a) = 0$; then
\begin{equation}\label{eq1.2}
\int_a^bF'(|f(x)|)|f'(x)|dx \le F\bigg( \int_a^b|f'(x)|dx\bigg).
\end{equation}
 Later, some  multidimentional generalizations of \eqref{eq1.2} were given, such as  Pe\v{c}ari\'{c}  \cite{pe},  Pachpatte  \cite{pach}, and  Andri\'{c} et al. \cite{AND}. 

In 2015,  Duc, Nhan and Xuan \cite{NDX} extended and generalized  \eqref{eqO2} to  several independent variables and  demonstrated the usefulness  in the field of partial differential equations:
\begin{equation}\label{eq:}
\begin{split}
&\bigg[ \int_{\Omega}\bigg| \partial^{\boldsymbol{\alpha}}  \bigg( \prod^m_{j=1}{  G_j(u_j(\boldsymbol{x})) } \bigg) \bigg|^s K_{\boldsymbol{\alpha}}(\boldsymbol{b}, \boldsymbol{x})\sigma(\boldsymbol{x})     d \boldsymbol{x} \bigg]^{1/s}\\
 &\leq C \prod^m_{j =1}{ \bigg[ G_j \bigg(  \int_{\Omega }  | \partial^{\boldsymbol{\alpha}}u_j(\boldsymbol{x}) |^p  K_{\boldsymbol{\alpha}}(\boldsymbol{b}, \boldsymbol{x})\rho_j(\boldsymbol{x})   d \boldsymbol{x}  \bigg) \bigg]^{1/p}  },  \\
\end{split}
\end{equation}
where $C$ is a constant.

In  recent years, the theory of time scales which was introduced by  Hilger \cite{Hilger} in order to unify the study of differential and difference equations, has  received a lot of attention. The readers may find much of time scales calculus in books by Bohner and Peterson \cite{BP}, \cite{BP1}. One of main subjects of the qualitative analysis on time scales is to prove some new dynamic inequalities. Opial-type inequalities on time scales was first proved by Bohner and Kaymak\c{c}alan \cite{BK} in 2001  (see also \cite{ABP}), in which they showed that if $f: [0, b]_{\mathbb{T}}\to  \mathbb{R}$  is delta differentiable with $ f(0) = 0$, then
\begin{equation}
\int_0^b|[f(x)+f^{\sigma}(x)]f^{\Delta}(x) | \Delta x \le b\int_0^b |f^{\Delta}(x)|^2\Delta x. 
\label{eq1.1}
\end{equation}
Afterwards, numerous authors have studied variants of  \eqref{eq1.1} (see, for example, \cite{Ka}, \cite{Lihan}, \cite{SAK}, \cite{SAK1}, \cite{SORA}, \cite{SIVA}, and \cite{YZ}). The best  reference here is the book by Agarwal, O'Regan, and Saker  \cite[Chapter 3]{AOS}, where  the most popular articles on this subject are collected.  

However, to the best of the author knowledge nothing is known regarding Opial-type inequalities involving functions of several variables and their partial derivatives on time scales. Thus, the aim of this paper is to study some weighted integral inequalities for delta derivatives acting on compositions of functions on time scales which are in turn applied to establish multidimentional dynamic Opial-type inequalities. As applications, we establish Lyapunov-type inequalities for half-linear dynamic equations and obtain upper bounds of solutions of certain integro-partial dynamic equations.

\section{Preliminaries}
In the most part we assume the readers were so familiar with basic time scales calculus. More information about time scales calculus can be found in \cite{BP} and \cite{BP1}. In this section, we only present some basic definitions and notations about calculus in several variables on time scales.

Let $\mathbb{T}$ be a time scale, and let $\sigma, \rho,$ and $\Delta$ denote, respectively,  the forward jump, backward jump,  and delta operator on $\mathbb{T}$. 
Fix $n \in \mathbb{N}$ and let  $\mathbb{T}_j,$ where $ j =1, ..., n,$ be time scales, and 
 \[\Lambda^n = \mathbb{T}_1\times \cdot \cdot \cdot\times\mathbb{T}_n = \{ \boldsymbol{x}= (x_1, ..., x_n): x_j \in \mathbb{T}_j \,\,\, \text{for all}\,\,\,  j\in [1, n]_{\mathbb{N}}\}\]
  be the $n$-dimensional time scale. 
For $i \in [1, n]_{\mathbb{N}},$ let  $\sigma_j, \rho_j,$ and $\Delta_j$ denote, respectively,  the forward jump, backward jump,  and delta operator on $\mathbb{T}_j$.  We define ${\mathbb T}_j^{\kappa} = {\mathbb T}_j$ if ${\mathbb T}_j$ does not have a left-scattered maximum ${x_j}_{\max}$; otherwise ${\mathbb T}_j^{\kappa} = {\mathbb T}_j\setminus\{{x_j}_{\max}\}$. 
The graininess functions $\mu_j:{\mathbb T}_j\to [0,\infty)$ are defined by $\mu_j(x_j)=\sigma_j(x_j)-x_j$ for $j \in [1, n]_{\mathbb{N}}$.
For $x_j \in \mathbb{T}_j, j \in [1, n]_{\mathbb{N}},$ we denote $\rho^2_j(x_j) = \rho_j(\rho_j(x_j))$ and $\rho^k_j(x_j) = \rho_j(\rho^{k-1}_j(x_j))$ for $k \in \mathbb{N}.$ For convenience we put $\rho_j^0(x_j) = x_j$, $ j\in [1, n]_{\mathbb{N}}.$
For $\boldsymbol{x} = (x_1, ..., x_n), \boldsymbol{y}= (y_1, ...,  y_n) \in \Lambda^n$, we shall write $\boldsymbol{x}\leq \boldsymbol{y}$ instead of $x_j \leq y_j$ for all $j\in [1,n]_{\mathbb{N}} $. Analogously one has to understand $\boldsymbol{x} = \boldsymbol{y}, \boldsymbol{x} > \boldsymbol{y}$ and $\boldsymbol{x} < \boldsymbol{y}$, respectively. We put $\boldsymbol{x} +\boldsymbol{y} = (x_1+y_1, ..., x_n+y_n)$.
We denote by  $\boldsymbol{\lambda}= (\lambda_1, ..., \lambda_n)$ the multi-index, i.e. $\lambda_j \in \mathbb{N}_0 = \mathbb{N}\cup \{0\},  j \in [1,n]_{\mathbb{N}}.$  In particular, let $\boldsymbol{1}=(1, ..., 1)$. 
 Let $\boldsymbol{a}= (a_1, ..., a_n)$,  $ \boldsymbol{b} =(b_1, ..., b_n)$, and ${\boldsymbol\rho}^{\boldsymbol{\lambda}-\boldsymbol{1}}(\boldsymbol{b}) = (\rho^{\lambda_1-1}_1(b_1), ..., \rho^{\lambda_n-1}_n(b_n))$ for $\boldsymbol{\lambda}\geq \boldsymbol{1},$ be in $\Lambda^n$ such that $\boldsymbol{a}< {\boldsymbol \rho}^{\boldsymbol{\lambda}-\boldsymbol{1}}(\boldsymbol{b})$. Then we set
\[ \Omega  = \{\boldsymbol{x} \in \Lambda^n: \boldsymbol{a}\leq \boldsymbol{x}\leq \boldsymbol{b} \}, \]
\[ \Omega^{\kappa^{\boldsymbol{\lambda-1}}}  = \{\boldsymbol{x} \in \Lambda^n: \boldsymbol{a}\leq \boldsymbol{x}\leq  {\boldsymbol\rho}^{\boldsymbol{\lambda}-\boldsymbol{1}}(\boldsymbol{b})  \}, \]
\[\Omega_{\boldsymbol{x}} =\{\boldsymbol{t} \in \Lambda^n: \boldsymbol{a}\leq \boldsymbol{t}\leq \boldsymbol{x} \}, \quad \boldsymbol{x} \in \Omega, \]
\[\bar{\Omega}_{\boldsymbol{x}} =\{\boldsymbol{t} \in \Lambda^n: \boldsymbol{x}\leq \boldsymbol{t}\leq {\boldsymbol \rho}^{\boldsymbol{\lambda}-\boldsymbol{1}}(\boldsymbol{b}) \}, \quad \boldsymbol{x} \in \Omega^{\kappa^{\boldsymbol{\lambda-1}}}, \]
\[ \Omega' =[a_2, b_2]_{\mathbb{T}_2} \times \cdot \cdot \cdot \times [a_n, b_n]_{\mathbb{T}_n}. \]
For any  real-valued rd-continuous  function $f$ defined on $\Omega$ we denote  by $\int_{\Omega}f(\boldsymbol{x})  \Delta \boldsymbol{x}$, $\int_{\Omega^{\kappa^{\boldsymbol{\lambda-1}}}}f(\boldsymbol{x})  \Delta \boldsymbol{x}$, 
$\int_{\Omega_{\boldsymbol{x}}}f(\boldsymbol{t})  \Delta \boldsymbol{t}$ for any $\boldsymbol{x} \in \Omega,$
$\int_{\bar{\Omega}_{\boldsymbol{x}}}f(\boldsymbol{t})  \Delta \boldsymbol{t}$ for any $\boldsymbol{x} \in \Omega^{\kappa^{\boldsymbol{\lambda-1}}},$ and
$\int_{\Omega'}f(\boldsymbol{x'})  \Delta \boldsymbol{x'}$  for  $\boldsymbol{x'} \in \Omega'$,  are
 $n-$fold integrals $\int^{b_1}_{a_1}\cdot \cdot \cdot \int^{b_n}_{a_n} f(x_1, ..., x_n)\Delta x_1 \cdot \cdot \cdot \Delta x_n,$ 
   $\int^{\rho^{\lambda_1-1}_1(b_1)  }_{a_1}\cdot \cdot \cdot \int^{\rho^{\lambda_n-1}_n(b_n)   }_{a_n} f(x_1, ..., x_n)\Delta x_1 \cdot \cdot \cdot \Delta x_n,$ 
   $ \int^{x_1}_{a_1}\cdot \cdot \cdot\int^{x_n}_{a_n} f(t_1, ..., t_n)\Delta t_1 \cdot \cdot \cdot \Delta t_n,$ 
$ \int^{ \rho^{\lambda_1-1}_1(b_1)   }_{x_1}\cdot \cdot \cdot\int^{\rho^{\lambda_n-1}_n(b_n)    }_{x_n} f(t_1, ..., t_n)\Delta t_1 \cdot \cdot \cdot \Delta t_n$, and
 $(n-1)-$fold integral $ \int^{b_2}_{a_2}\cdot \cdot \cdot\int^{b_n}_{a_n} f(x_2, ..., x_n)\Delta x_2\cdot \cdot \cdot \Delta x_n$,
respectively.

 Let $f: \Lambda^n \to \mathbb{R}$. The $partial$  $delta$ $derivative$  of $f$ with respect to $x_j \in \mathbb{T}^\kappa_j$ is defined as the limit
\[\lim_{\underset{ t_j \neq \sigma_j(x_j)}{t_j \to x_j}}\frac{f(x_1, ..., \sigma_j(x_j), ..., x_n) -f(x_1, ..., t_j, ..., x_n) }{\sigma_j(x_j)-t_j}  \]
provided that this limit exists as a finite number, and is denoted by $ \frac{\partial f(\boldsymbol{x})}{\Delta_j x_j}. $
If $f$ has partial derivatives  $\frac{\partial f(\boldsymbol{x})}{\Delta_jx_j}, j \in [1,n]_{\mathbb{N}},$  then we can also consider their partial delta  derivatives. These are called $second$ $order$ partial delta derivatives. We write
\[ 
\frac{\partial^2 f(\boldsymbol{x})}{\Delta_j x^2_j}=\frac{\partial }{\Delta_j x_j}\bigg(\frac{\partial f(\boldsymbol{x})}{\Delta_j x_j}\bigg), \quad \frac{\partial^2 f(\boldsymbol{x})}{\Delta_j x_j \Delta_i x_i} =\frac{\partial }{\Delta_j x_j}\bigg(\frac{\partial f(\boldsymbol{x})}{\Delta_i x_i}\bigg). 
 \]
Higher order partial delta derivatives are similarly defined.

 Let $\boldsymbol{\lambda}\geq \boldsymbol{1}$ be a multi-index,  we denote by  $|\boldsymbol{\lambda}| = \lambda_1+ \cdot \cdot \cdot+ \lambda_n$;  then we set
\[  \frac{\partial^{\boldsymbol{\lambda}}f(\boldsymbol{x}) }{\Delta {\boldsymbol{x}}^{\boldsymbol{\lambda}}}=\frac{\partial^{|\boldsymbol{\lambda}|}f(\boldsymbol{x}) }{\Delta_1x^{\lambda_1}_1 \cdot \cdot \cdot\Delta_nx^{\lambda_n}_n }.\]
 By $C^{n\boldsymbol{\lambda}}_{\text{rd}}(\Omega),$ we denote the set of all functions $f: \Omega \to \mathbb{R}$  which have rd-continuous derivatives    $\frac{\partial ^{k_1+ \cdot \cdot \cdot+ k_j}f(\boldsymbol{x})}{\Delta_1 x^{k_1}_1 \cdot \cdot \cdot \Delta_j x^{k_j}_j }$   for $k_j \in [1, \lambda_j]_{\mathbb{N}}$,  $j \in [1, n]_{\mathbb{N}}$.   
A function $\tau : \Omega \to \mathbb{R}$ is said to be a weight on $\Omega$ if $\tau$ is positive-valued and rd-continuous on $\Omega.$ Let us  denote by $\mathcal{W}(\Omega)$ the set of all weights on $\Omega$. 
Let $p\geq1$ and $\tau \in \mathcal{W}(\Omega)$. We represent by $\mathcal{L}^p_{\boldsymbol{a}}(\Omega, \tau, \boldsymbol{\lambda})$
the set of all functions $f: \Omega \to \mathbb{R}$  of class $C^{n\boldsymbol{\lambda}}_{\text{rd}}(\Omega)$ for which
 $\frac{\partial^{k_j}  f(\boldsymbol{x})}{\Delta_jx^{k_j}_j}|_{x_j =a_j} = 0$ for $k_j \in [0, \lambda_j-1]_{\mathbb{N}},  j \in[1, n]_{\mathbb{N}},$ and that $\int_{\Omega}|\frac{\partial^{\boldsymbol{\lambda}} f(\boldsymbol{x}) }{\Delta \boldsymbol{x}^{\boldsymbol{\lambda}}}|^{p}\tau(\boldsymbol{x})\Delta \boldsymbol{x} <\infty.$

We set \[H_{\boldsymbol{\lambda}}(\boldsymbol{x},\boldsymbol{t}) = \prod^n_{j=1}{h^{(j)}_{\lambda_j-1}(x_j,\sigma_j(t_j))}, \quad \boldsymbol{x}, \boldsymbol{t} \in \Omega,\]
where $h^{(j)}_{k}: \mathbb{T}^2_j \to \mathbb{R},$ $k \in \mathbb{N}_0,$ is such that
$h^{(j)}_{0}(t,s)\equiv 1$ for all $t, s\in \mathbb T_j,$ and $h^{(j)}_{k+1}(t,s)=\int_s^t h^{(j)}_{k}(u,s)\Delta u $ for all $ t, s\in \mathbb T_j, k \in \mathbb{N}_0. $

Let $m$ be a positive integer and $0 <R\leq \infty$. We represent by $\mathcal{H}^m_R$ the set of all functions $F : (-R, R)^m \to \mathbb{R}$ such that 
\begin{enumerate}
\item $F\in C^1((-R, R)^m)$, 
\item   $F(0, ..., 0) = 0$, and
\item   $D_iF$ for $i \in [1, m]_{\mathbb{N}}$, are non-negative and increasing in each variable on $(0, R)$, 
where $D_i = \partial/\partial{t_i},    i \in [1, m]_{\mathbb{N}}$ for all $(t_1, ..., t_m) \in (-R, R)^m.$
\end{enumerate}
We give a preliminary lemma that we shall use in Section $3$.
\begin{lemma}\label{chainp}
Let $F \in \mathcal{H}^m_R$ and  $g_i: \mathbb{T} \to [0, R)$ for $i \in [1, m]_{\mathbb{N}},$ are delta differentiable on $\mathbb{T}^{\kappa}$ such that $g^\Delta_i$ are non-negative on $\mathbb{T}^{\kappa}$;  then  the composite function $ F(g_1(x), ..., g_m(x))$ is delta differentiable on $ \mathbb{T}^\kappa$ such that 
\begin{equation}\label{chainnew}
[F(g_1(x), ..., g_m(x))]^\Delta \geq \sum^m_{i=1}{D_i F(g_1(x), ..., g_m(x))g_i^{\Delta}(x)}, \quad x \in \mathbb{T}^{\kappa}.
\end{equation}
 \end{lemma}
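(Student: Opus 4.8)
The plan is to establish \eqref{chainnew} pointwise, distinguishing the two possible local behaviours of the time scale at a fixed point $x \in \mathbb{T}^{\kappa}$: the right-dense case $\mu(x) = 0$ and the right-scattered case $\mu(x) > 0$. Throughout I write $\mathbf{g}(x) = (g_1(x), \ldots, g_m(x))$ and use that each $g_i$, being delta differentiable, is continuous with values in $[0,R)$, so $\mathbf{g}(x)$ stays in the region where $F$ is $C^1$ and where the monotonicity hypotheses on the $D_iF$ apply.

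First I would treat the right-dense case. Since $F \in C^1$, for $t$ near $x$ the one-dimensional fundamental theorem applied along the segment joining $\mathbf{g}(t)$ and $\mathbf{g}(x)$ gives
\begin{equation*}
\frac{F(\mathbf{g}(x)) - F(\mathbf{g}(t))}{x-t} = \sum_{i=1}^m \left[\int_0^1 D_iF\big(\mathbf{g}(t) + h(\mathbf{g}(x) - \mathbf{g}(t))\big)\, dh\right]\frac{g_i(x) - g_i(t)}{x-t}.
\end{equation*}
Letting $t \to x$ and using continuity of $\mathbf{g}$ and of each $D_iF$ together with $\frac{g_i(x)-g_i(t)}{x-t} \to g_i^{\Delta}(x)$, the composition is delta differentiable at $x$ and \eqref{chainnew} in fact holds with equality.

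The substantive case is the right-scattered one, where $[F(\mathbf{g}(x))]^{\Delta} = \mu(x)^{-1}\big[F(\mathbf{g}(\sigma(x))) - F(\mathbf{g}(x))\big]$. Put $a_i = g_i(x)$ and $b_i = g_i(\sigma(x)) = a_i + \mu(x)g_i^{\Delta}(x)$; since $g_i^{\Delta}(x) \geq 0$, we have $b_i \geq a_i \geq 0$ for every $i$. I would telescope the difference $F(\mathbf{b}) - F(\mathbf{a})$ through the intermediate points $\mathbf{c}^{(k)} = (b_1, \ldots, b_k, a_{k+1}, \ldots, a_m)$, obtaining
\begin{equation*}
F(\mathbf{b}) - F(\mathbf{a}) = \sum_{k=1}^m \big[F(\mathbf{c}^{(k)}) - F(\mathbf{c}^{(k-1)})\big] = \sum_{k=1}^m \int_{a_k}^{b_k} D_kF(b_1, \ldots, b_{k-1}, s, a_{k+1}, \ldots, a_m)\, ds.
\end{equation*}
Each evaluation point appearing in the $k$-th integrand dominates $\mathbf{a}$ componentwise (its first $k-1$ coordinates are the $b_j \geq a_j$, its $k$-th coordinate is $s \geq a_k$, and the rest coincide with $\mathbf{a}$). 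Because $D_kF$ is non-negative and increasing in each variable, I would bound the integrand below by $D_kF(\mathbf{a})$, so that $F(\mathbf{c}^{(k)}) - F(\mathbf{c}^{(k-1)}) \geq D_kF(\mathbf{a})(b_k - a_k) = \mu(x)D_kF(\mathbf{a})g_k^{\Delta}(x)$. Summing over $k$ and dividing by $\mu(x) > 0$ yields \eqref{chainnew}.

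I expect the main obstacle to be this right-scattered step: the telescoping must be arranged so that every intermediate evaluation point dominates $\mathbf{a}$, as that is precisely what allows the monotonicity of $D_kF$ to convert the exact integral identity into the desired one-sided bound. This is where the hypotheses $g_i^{\Delta} \geq 0$ (hence $b_i \geq a_i$) and the monotonicity of the $D_iF$ are both indispensable. A minor technical point to dispatch is the behaviour at boundary arguments where some $g_i(x) = 0$, which is handled by extending the monotonicity of $D_kF$ from $(0,R)$ to $[0,R)$ in each variable by continuity of $D_kF$.
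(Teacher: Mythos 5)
Your proposal is correct and follows essentially the same route as the paper: split into the right-dense and right-scattered cases, obtain equality in the dense case via the chain rule, and in the scattered case telescope the difference $F(\mathbf{g}(\sigma(x)))-F(\mathbf{g}(x))$ coordinate by coordinate so that every intermediate evaluation point dominates $\mathbf{g}(x)$ componentwise, then invoke the monotonicity of the $D_iF$. The only (immaterial) differences are that you use the integral form of the mean value theorem where the paper uses the pointwise one, you telescope the coordinates in the opposite order, and you explicitly handle the boundary case $g_i(x)=0$ by continuity, which the paper passes over.
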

\begin{proof}
Fix $x \in \mathbb{T}^\kappa$ and put $ t_i = g_i(x)$,  $t^\sigma_i = g_i(\sigma(x))$, we see that $t_i \leq t^\sigma_i$ for $i \in [1, m]_{\mathbb{N}}.$  We have two cases.

 \textit{Case 1.} Suppose that  $x <\sigma(x).$ Then
\[ 
[F(t_1, ..., t_m)]^\Delta = \frac{F(t^\sigma_1, ..., t^\sigma_m)-F(t_1, ..., t_m)}{\sigma(x)-x}.\]
For all $i \in [1, m]_{\mathbb{N}}$, we set
\[ A_i= 
 \begin{cases}
  \dfrac{F(t_1, ..., t^\sigma_i, ...,  t^\sigma_m)-F(t_1, ..., t_i, t_{i+1}^\sigma, ...,  t_m^\sigma)}{t^\sigma_i-t_i}  & \text{if}\quad t_i < t^\sigma_i,    \\
  0& \text{if}\quad   t_i = t^\sigma_i.
\end{cases}
\]
Then,
\[ [F(t_1, ..., t_m)]^\Delta=\sum^m_{i=1} A_i \frac{g_i(\sigma(x))-g_i(x)}{\sigma(x) -x} = \sum^m_{i=1} A_i g^\Delta_i(x).
\]
If $t_i = t^\sigma_i$, then $g^\Delta_i(x) = 0$;   if $t_j < t^\sigma_j$ with $j \neq i$, then
\[ A_j=\frac{F(t_1, ..., t^\sigma_j, ...,  t^\sigma_m)-F(t_1, ..., t_j, t_{j+1}^\sigma, ...,  t_m^\sigma)}{t^\sigma_j-t_j}  =D_jF(t_1, ..., c_j, t^\sigma_{j+1}, ..., t^\sigma_m),  \]
by the mean value theorem, where $c_j \in (t_j, t^\sigma_j)$. Since $D_jF$, $i \in [1, m]_{\mathbb{N}}$, are increasing in each variable on $(0,R)$, we have
\[D_jF(t_1, ..., c_j, t^\sigma_{j+1}, ..., t^\sigma_{m}) \geq D_jF(t_1, ..., t_m) = D_j F(g_1(x), ..., g_m(x)),\]
which yields \eqref{chainnew}.

\textit{ Case 2.} Suppose that $x = \sigma(x)$. We set
\[ F_i(g(s)) :=F(g_1(x), ..., g_{i-1}(x), g_i(s), g_{i+1}(s), ..., g_m(s)),   \]
\[ F_i(g(x)) :=F(g_1(x), ..., g_{i-1}(x), g_i(x), g_{i+1}(s), ..., g_m(s)), \quad i \in [1, m]_{\mathbb{N}}.   \]
We have
\[ [F(g_1(x), ..., g_m(x))]^\Delta = \lim_{s \to x}\frac{F(g_1(s), ..., g_m(s))-F(g_1(x), ..., g_m(x))}{s-x},  \]
where 
\[\frac{F(g_1(s), ..., g_m(s))-F(g_1(x), ..., g_m(x))}{s-x} = \sum^m_{i =1} {\frac{g_i(s)-g_i(x)}{s-x} \frac{F_i(g(s)) -F_i(g(x))}{g_i(s)-g_i(x)} }.  \]
By the mean value theorem, there exist $\xi_i(s), i \in [1, m]_{\mathbb{N}}$, which are  between  $g_i(x)$ and  $g_i(s)$, such that
\[\frac{F_i(g(s)) -F_i(g(x))}{g_i(s)-g_i(x)}   = D_iF(g_1(x), ..., g_{i-1}(x), \xi_i(s), g_{i+1}(s), ..., g_m(s)).\]
Since $g_i$ for $i \in [1, m]_{\mathbb{N}},$ are delta differentiable on $\mathbb{T}^\kappa$, then $g_i$ for $i \in [1, m]_{\mathbb{N}},$ are continuous at $x$. Therefore,  $\lim_{s \to x}{\xi_i(s)} = g_i(x)$ and $\lim_{s \to x}{g_i(s)} = g_i(x)$ for $i \in [1, m]_{\mathbb{N}},$ which gives us the desired result. \qed
\end{proof}

\section{Integral inequalities on time scales}
\begin{theorem}\label{THEO1}
 Let $ F \in \mathcal{H}^m_R$ for $0 <R\leq \infty$, and  $f_i : \Omega \to (-R, R)$  which satisfies $\int_{\Omega}|\frac{\partial^{  \boldsymbol{1}    }f_i(\textbf{x})}{\Delta \textbf{x}^{   \boldsymbol{1}  }}|\Delta \textbf{x}<R$ for  $i \in [1, m]_{\mathbb{N}}$. If  $f_i \in \mathcal{L}^1_{\textbf{a}}(\Omega, 1, \boldsymbol{1})$ for all $i \in [1,m]_{\mathbb{N}},$ then
\begin{equation}\label{ptTHEO1}
\begin{split}
&\int_{\Omega}\bigg(\sum^m_{i=1}{D_iF(|f_1(\textbf{x})|, ..., |f_m(\textbf{x})|)\bigg|\frac{\partial^{\boldsymbol{1}}f_i(\textbf{x})}{\Delta \textbf{x}^{  \boldsymbol{1}   } }\bigg|}\bigg) \Delta \textbf{x} \\
&\leq  F\bigg(\int_{\Omega}\bigg|\frac{\partial^{  \boldsymbol{1}  }f_1(\textbf{x})}{\Delta \textbf{x}^{\boldsymbol{1}   } }\bigg|\Delta \textbf{x}, ..., \int_{\Omega}\bigg|\frac{\partial^{  \boldsymbol{1}    }f_m(\textbf{x})}{\Delta \textbf{x}^{   \boldsymbol{1}  }}\bigg|\Delta \textbf{x}  \bigg). \\
\end{split}
\end{equation}
\end{theorem}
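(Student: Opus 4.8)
The plan is to dominate the integrand by replacing each $f_i$ with the anti-derivative of its top-order mixed derivative and then to collapse the multiple integral one variable at a time. First I would set $\phi_i(\boldsymbol x)=\bigl|\frac{\partial^{\boldsymbol 1}f_i(\boldsymbol x)}{\Delta\boldsymbol x^{\boldsymbol 1}}\bigr|$ and define $g_i(\boldsymbol x)=\int_{\Omega_{\boldsymbol x}}\phi_i(\boldsymbol t)\,\Delta\boldsymbol t$ for $i\in[1,m]_{\mathbb N}$. Iterating the one–dimensional fundamental theorem of time scales calculus together with the conditions built into $f_i\in\mathcal L^1_{\boldsymbol a}(\Omega,1,\boldsymbol 1)$ (which, since $\boldsymbol\lambda=\boldsymbol 1$, force $f_i$ to vanish on every lower face $x_j=a_j$) yields the representation $f_i(\boldsymbol x)=\int_{\Omega_{\boldsymbol x}}\frac{\partial^{\boldsymbol 1}f_i(\boldsymbol t)}{\Delta\boldsymbol t^{\boldsymbol 1}}\,\Delta\boldsymbol t$, whence $|f_i(\boldsymbol x)|\le g_i(\boldsymbol x)$. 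Moreover $g_i$ maps $\Omega$ into $[0,R)$ because $g_i(\boldsymbol b)=\int_\Omega\phi_i\,\Delta\boldsymbol x<R$, each $g_i$ vanishes whenever some coordinate equals $a_j$, the full mixed derivative satisfies $\frac{\partial^{\boldsymbol 1}g_i}{\Delta\boldsymbol x^{\boldsymbol 1}}=\phi_i$, and every iterated partial delta-integral of $\phi_i\ge 0$ is again non-negative, so each $g_i$ is non-decreasing in every variable separately. Since each $D_iF$ is increasing in each argument and $|f_j|\le g_j$, I get $D_iF(|f_1|,\dots,|f_m|)\le D_iF(g_1,\dots,g_m)$ pointwise, so the left-hand side of \eqref{ptTHEO1} is at most $\int_\Omega\sum_i D_iF(\boldsymbol g)\frac{\partial^{\boldsymbol 1}g_i}{\Delta\boldsymbol x^{\boldsymbol 1}}\,\Delta\boldsymbol x$, where $\boldsymbol g=(g_1,\dots,g_m)$.

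The heart of the matter is then the inequality $\int_\Omega\sum_i D_iF(\boldsymbol g)\frac{\partial^{\boldsymbol 1}g_i}{\Delta\boldsymbol x^{\boldsymbol 1}}\,\Delta\boldsymbol x\le F\bigl(g_1(\boldsymbol b),\dots,g_m(\boldsymbol b)\bigr)$, which I would establish by induction on the dimension $n$, peeling off the innermost variable. Fix $\boldsymbol x''=(x_1,\dots,x_{n-1})$ and write $\frac{\partial^{\boldsymbol 1}g_i}{\Delta\boldsymbol x^{\boldsymbol 1}}=\frac{\partial}{\Delta_n x_n}r_i$ with $r_i=\frac{\partial^{n-1}g_i}{\Delta_1 x_1\cdots\Delta_{n-1}x_{n-1}}$. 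Because each $g_j$ is non-decreasing in $x_n$, one has $g_j(\boldsymbol x'',x_n)\le g_j(\boldsymbol x'',b_n)=:\hat g_j(\boldsymbol x'')$, so $D_iF(\boldsymbol g(\boldsymbol x'',x_n))\le D_iF(\hat{\boldsymbol g}(\boldsymbol x''))$; as this majorant is independent of $x_n$ and $\frac{\partial r_i}{\Delta_n x_n}=\phi_i\ge 0$, the inner $x_n$-integral is bounded by $\sum_i D_iF(\hat{\boldsymbol g}(\boldsymbol x''))\int_{a_n}^{b_n}\frac{\partial r_i}{\Delta_n x_n}\,\Delta x_n$. The fundamental theorem collapses this to $r_i(\boldsymbol x'',b_n)-r_i(\boldsymbol x'',a_n)$, and the lower term vanishes because $g_i\equiv 0$ on the face $x_n=a_n$, hence so do all its $\boldsymbol x''$-partials there. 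Since $r_i(\boldsymbol x'',b_n)=\frac{\partial^{n-1}\hat g_i}{\Delta_1 x_1\cdots\Delta_{n-1}x_{n-1}}(\boldsymbol x'')$, the remaining $(n-1)$-fold integral has exactly the form of the original one with $\boldsymbol g$ replaced by $\hat{\boldsymbol g}$ on $\Omega''=[a_1,b_1]_{\mathbb T_1}\times\cdots\times[a_{n-1},b_{n-1}]_{\mathbb T_{n-1}}$.

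To close the induction I must verify that $\hat{\boldsymbol g}$ inherits the hypotheses: each $\hat g_i(\boldsymbol x'')=\int_{a_1}^{x_1}\cdots\int_{a_{n-1}}^{x_{n-1}}\bigl(\int_{a_n}^{b_n}\phi_i\,\Delta s_n\bigr)\Delta\boldsymbol t''$ is again the iterated integral of a non-negative rd-continuous function, it vanishes on the faces $x_j=a_j$ for $j\le n-1$, and $\hat g_i(\boldsymbol b'')=g_i(\boldsymbol b)<R$, where $\boldsymbol b''=(b_1,\dots,b_{n-1})$. The induction hypothesis (same $F$, dimension $n-1$) then gives the bound $F\bigl(\hat g_1(\boldsymbol b''),\dots,\hat g_m(\boldsymbol b'')\bigr)=F\bigl(g_1(\boldsymbol b),\dots,g_m(\boldsymbol b)\bigr)$, which is precisely the right-hand side of \eqref{ptTHEO1}. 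The base case $n=1$ is exactly Lemma~\ref{chainp}: since $g_i^\Delta=\phi_i\ge 0$ and $g_i(a_1)=0$, the lemma yields $[F(\boldsymbol g)]^\Delta\ge\sum_i D_iF(\boldsymbol g)g_i^\Delta$, and integrating over $[a_1,b_1]_{\mathbb T_1}$ while using $F(0,\dots,0)=0$ completes it.

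The step I expect to be the main obstacle is the reduction inside the induction, namely justifying that freezing the innermost variable at $b_n$ (via monotonicity of the $g_j$) followed by the one–dimensional fundamental theorem does not lose boundary information: I must check carefully that $g_i$ together with all of its lower-order partial delta derivatives in $x_1,\dots,x_{n-1}$ genuinely vanish on the face $x_n=a_n$, and that evaluation at $x_n=b_n$ commutes with the delta derivatives in the remaining variables, so that the collapsed function $\hat g_i$ really does carry every structural property needed for the induction hypothesis. Carrying this out through Lemma~\ref{chainp} alone—so that only the single, $C^1$ differentiability of $F$ is ever used and no second derivatives of $F$ are invoked—is exactly what makes the monotonicity-and-freezing device, rather than a direct computation of the mixed delta derivative of $F(\boldsymbol g)$, indispensable.
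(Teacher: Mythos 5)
Your argument is correct, and it rests on the same two pillars as the paper's proof: majorize $|f_i(\boldsymbol x)|$ by an iterated integral of $|\partial^{\boldsymbol 1}f_i/\Delta\boldsymbol x^{\boldsymbol 1}|$ that is monotone in the relevant variables, exploit the monotonicity of $D_iF$, and finish with Lemma~\ref{chainp} in one dimension. Where you differ is in the reduction to one dimension. You take the full multivariate majorant $g_i(\boldsymbol x)=\int_{\Omega_{\boldsymbol x}}|\partial^{\boldsymbol 1}f_i/\Delta\boldsymbol t^{\boldsymbol 1}|\,\Delta\boldsymbol t$ and run an induction on $n$, peeling off one variable per step by freezing it at its upper endpoint and invoking the fundamental theorem; this forces you to track that $\hat g_i$ inherits the iterated-integral structure, the vanishing on lower faces, and the commutation of evaluation at $x_n=b_n$ with the remaining delta derivatives — all of which you verify correctly. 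The paper instead short-circuits the whole induction by defining the univariate majorant $g_i(x_1)=\int_{a_1}^{x_1}\int_{\Omega'}|\partial^{\boldsymbol 1}f_i/\Delta\boldsymbol t^{\boldsymbol 1}|\,\Delta\boldsymbol t$ from the outset (i.e., freezing $x_2,\dots,x_n$ at $b_2,\dots,b_n$ all at once inside the bound $|f_i(\boldsymbol x)|\le g_i(x_1)$), applying Fubini a single time, and then using Lemma~\ref{chainp} directly on $[a_1,b_1]_{\mathbb T_1}$. The paper's route is shorter and avoids the bookkeeping your induction requires; your route is more symmetric in the variables and makes explicit that only one-dimensional information about $F$ (its $C^1$ regularity and the chain-rule inequality) is ever used, but your closing claim that the step-by-step freezing device is ``indispensable'' overstates matters, since the one-shot univariate majorant achieves the same end more economically.
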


\begin{proof}
Since $f_i \in \mathcal{L}^1_{\boldsymbol{a}}(\Omega, 1, \boldsymbol{1})$, it follows that
\begin{equation}\label{eqTHEO1}
f_i(\boldsymbol{x})  = \int_{\Omega_{\boldsymbol{x}}}\frac{\partial^{\boldsymbol{1}}f_i(\boldsymbol{t})}{\Delta \boldsymbol{t}^{\boldsymbol{1}} } \Delta \boldsymbol{t}
\end{equation}
for all $\boldsymbol{x} \in \Omega$ and all $i \in [1, m]_{\mathbb{N}}$.
Let 
\[ g_i(x_1): =\int^{x_1}_{a_1}\int_{\Omega'}\bigg|\frac{\partial^{\boldsymbol{1}}f_i(\boldsymbol{t})}{\Delta \boldsymbol{t}^{\boldsymbol{1}}}\bigg| \Delta \boldsymbol{t} \quad \text{for} \quad x_1 \in [a_1, b_1]_{\mathbb{T}}, \quad  i \in [1, m]_{\mathbb{N}}.  \]
We see that  $|f_i(\boldsymbol{x})| \leq g_i(x_1)$ for $\boldsymbol{x} \in \Omega$ and  functions $g_i, i \in [1, m]_{\mathbb{N}},$ are increasing on $[a_1, b_1]_{\mathbb{T}}$. By  $F \in \mathcal{H}^m_R$, we obtain
\[
\begin{split}
&\int_{\Omega}\bigg(\sum^m_{i=1}{D_iF(|f_1(\boldsymbol{x})|, ..., |f_m(\boldsymbol{x})|)\bigg|\frac{\partial^{\boldsymbol{1}}f_i(\boldsymbol{x})}{\Delta \boldsymbol{x}^{\boldsymbol{1}}}\bigg|}\bigg) \Delta \boldsymbol{x}\\
& \leq \int_{\Omega}\bigg(\sum^m_{i=1}{D_iF(g_1(x_1)), ..., g_m(x_1))\bigg|\frac{\partial^{\boldsymbol{1}}f_i(\boldsymbol{x})}{\Delta \boldsymbol{x}^{\boldsymbol{1}}}\bigg|}\bigg) \Delta \boldsymbol{x}\\
& \leq \int^{b_1}_{a_1}\bigg(\sum^m_{i=1}{D_iF(g_1(x_1), ..., g_m(x_1))\int_{\Omega'}{\bigg|\frac{\partial^{\boldsymbol{1}}f_i(\boldsymbol{x})}{\Delta \boldsymbol{x}^{\boldsymbol{1}}}\bigg|}\Delta \boldsymbol{x'}}\bigg) \Delta x_1\\
& \leq \int^{b_1}_{a_1}\bigg(\sum^m_{i=1}{D_iF(g_1(x_1), ..., g_m(x_1))\frac{\partial g_i(x_1)}{\Delta_1x_1}\bigg)} \Delta x_1,\\
\end{split}
\]
which, in view of Lemma \ref{chainp}, yields
\[
\begin{split}
&\int_{\Omega}\bigg(\sum^m_{i=1}{D_iF(|f_1(\boldsymbol{x})|, ..., |f_m(\boldsymbol{x})|)\bigg|\frac{\partial^{\boldsymbol{1}}f_i(\boldsymbol{x})}{\Delta \boldsymbol{x}^{\boldsymbol{1}}}\bigg|}\bigg) \Delta \boldsymbol{x}  \\
& \leq \int^{b_1}_{a_1}F^{\Delta_1}(g_1(x_1), ..., g_m(x_1))\Delta x_1\\
& =F\bigg(\int_{\Omega}\bigg|\frac{\partial^{\boldsymbol{1}}f_1(\boldsymbol{x})}{\Delta \boldsymbol{x}^{\boldsymbol{1}}}\bigg|\Delta \boldsymbol{x}, ..., \int_{\Omega}\bigg|\frac{\partial^{\boldsymbol{1}}f_m(\boldsymbol{x})}{\Delta \boldsymbol{x}^{\boldsymbol{1}}}\bigg|\Delta \boldsymbol{x}  \bigg), \\
\end{split}
\]
which completes the proof. \qed
\end{proof}

\begin{remark} From Theorem \ref{THEO1} we can obtain many known results.
\begin{enumerate}
\item If $\mathbb{T} = \mathbb{R}$, then Theorem \ref{THEO1} becomes \cite[Theorem 1]{BPe}  which was established by Brneti\'{c} and Pe\u{c}ari\'{c}.
\item If $\mathbb{T} = \mathbb{R}$, $n = m = 1,$ and   $F$ is convex on $[0, \infty)$, then inequality \eqref{ptTHEO1} reduces to \eqref{eq1.2}.
\item Let $\mathbb{T} = \mathbb{R}$,  $n=1$, and $F(x_1, ..., x_m) = |x_1 \cdot \cdot \cdot x_m|$; then inequality \eqref{ptTHEO1}  becomes \cite[Theorem 1]{pach}.
\end{enumerate}
\end{remark}

The following theorem is a generalization of Theorem \ref{THEO1}.
\begin{theorem}\label{THEO3}
 Let $ F \in \mathcal{H}^m_R$ for $0 <R\leq \infty$, and  $f_i : \Omega \to (-R, R)$ which satisfies  $  H_{\boldsymbol{\lambda}}(\textbf{b},\textbf{a})\int_{\Omega}|\frac{\partial^{  \boldsymbol{\lambda}    }f_i(\textbf{x})}{\Delta \textbf{x}^{   \boldsymbol{1}  }}|\Delta \textbf{x}<R$  for $i \in [1, m]_{\mathbb{N}}$. If  $f_i \in \mathcal{L}^1_{\textbf{a}}(\Omega, 1, \boldsymbol{\lambda})$ for all $i \in [1,m]_{\mathbb{N}},$ then
\begin{equation}\label{ptTHEO3}
\begin{split}
&\int_{\Omega}\bigg(\sum^m_{i=1}{D_iF(|f_1(\textbf{x})|, ..., |f_m(\textbf{x})|)\bigg|\frac{\partial^{\boldsymbol{\lambda}}f_i(\boldsymbol{x})}{\Delta \textbf{x}^{\boldsymbol{\lambda}}}\bigg|}\bigg) \Delta \textbf{x} \\
&\leq \frac{1}{H_{\boldsymbol{\lambda}}(\textbf{b},\textbf{a})}F\bigg(H_{\boldsymbol{\lambda}}(\textbf{b},\textbf{a})\int_{\Omega} \bigg|\frac{\partial^{\boldsymbol{\lambda}}f_1(\textbf{x}) }{\Delta \textbf{x}^{\boldsymbol{\lambda}}}\bigg|\Delta \textbf{x}, ...,H_{\boldsymbol{\lambda}}(\textbf{b}, \textbf{a})\int_{\Omega}\bigg|\frac{\partial^{\boldsymbol{\lambda}} f_m(\textbf{x})}{\Delta \textbf{x}^{\boldsymbol{\lambda}}}\bigg|\Delta \textbf{x}  \bigg).  \\
\end{split}
\end{equation}
\end{theorem}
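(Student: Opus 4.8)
The plan is to mimic the proof of Theorem \ref{THEO1} almost verbatim, the only genuinely new ingredient being the passage from the first-order representation \eqref{eqTHEO1} to a higher-order one. Since each $f_i \in \mathcal{L}^1_{\boldsymbol{a}}(\Omega, 1, \boldsymbol{\lambda})$ has all its lower-order partial derivatives vanishing on the faces $x_j = a_j$ (that is, $\frac{\partial^{k_j}f_i(\boldsymbol{x})}{\Delta_j x_j^{k_j}}\big|_{x_j = a_j} = 0$ for $k_j \in [0, \lambda_j - 1]_{\mathbb{N}}$), applying the one-variable time-scale Taylor formula successively in each coordinate $x_1, \dots, x_n$ collapses all boundary terms and yields the integral representation $f_i(\boldsymbol{x}) = \int_{\Omega_{\boldsymbol{x}}} H_{\boldsymbol{\lambda}}(\boldsymbol{x}, \boldsymbol{t}) \frac{\partial^{\boldsymbol{\lambda}}f_i(\boldsymbol{t})}{\Delta \boldsymbol{t}^{\boldsymbol{\lambda}}}\, \Delta \boldsymbol{t}$ for $\boldsymbol{x} \in \Omega$. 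This is the exact analogue of \eqref{eqTHEO1}, now with the Taylor kernel $H_{\boldsymbol{\lambda}}$ inserted.

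First I would record the monotonicity of the kernel. Each Taylor monomial $h^{(j)}_{\lambda_j - 1}(\cdot, \cdot)$ is non-negative for arguments in the right order, non-decreasing in its first slot and non-increasing in its second slot; hence for $\boldsymbol{t} \in \Omega_{\boldsymbol{x}}$ and $\boldsymbol{x} \le \boldsymbol{b}$ one obtains the uniform bound $0 \le H_{\boldsymbol{\lambda}}(\boldsymbol{x}, \boldsymbol{t}) \le H_{\boldsymbol{\lambda}}(\boldsymbol{b}, \boldsymbol{a})$. Feeding this into the representation gives $|f_i(\boldsymbol{x})| \le H_{\boldsymbol{\lambda}}(\boldsymbol{b}, \boldsymbol{a}) \int_{\Omega_{\boldsymbol{x}}} |\frac{\partial^{\boldsymbol{\lambda}}f_i(\boldsymbol{t})}{\Delta \boldsymbol{t}^{\boldsymbol{\lambda}}}|\,\Delta \boldsymbol{t}$, so it is natural to define $g_i(x_1) := H_{\boldsymbol{\lambda}}(\boldsymbol{b}, \boldsymbol{a}) \int_{a_1}^{x_1}\!\!\int_{\Omega'} |\frac{\partial^{\boldsymbol{\lambda}}f_i(\boldsymbol{t})}{\Delta \boldsymbol{t}^{\boldsymbol{\lambda}}}|\,\Delta\boldsymbol{t}$ for $i \in [1,m]_{\mathbb{N}}$. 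Enlarging $\Omega_{\boldsymbol{x}}$ to the slab $[a_1, x_1]_{\mathbb{T}_1}\times\Omega'$ shows $|f_i(\boldsymbol{x})| \le g_i(x_1)$ on $\Omega$, while $g_i$ is non-negative and non-decreasing on $[a_1,b_1]_{\mathbb{T}_1}$ with $g_i(a_1) = 0$ and $g_i(b_1) = H_{\boldsymbol{\lambda}}(\boldsymbol{b}, \boldsymbol{a})\int_\Omega |\frac{\partial^{\boldsymbol{\lambda}}f_i}{\Delta\boldsymbol{x}^{\boldsymbol{\lambda}}}|\,\Delta\boldsymbol{x}$. The hypothesis $H_{\boldsymbol{\lambda}}(\boldsymbol{b},\boldsymbol{a})\int_\Omega|\cdots|<R$ guarantees that each $g_i$ stays in $[0,R)$, so $F$ and its partials may legitimately be evaluated at $(g_1(x_1),\dots,g_m(x_1))$.

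With these ingredients the computation from Theorem \ref{THEO1} transfers directly. Using that each $D_iF$ is non-negative and increasing in every variable, I replace the arguments $|f_j(\boldsymbol{x})|$ by $g_j(x_1)$, perform the inner integration over $\Omega'$, and recognize $\int_{\Omega'}|\frac{\partial^{\boldsymbol{\lambda}}f_i}{\Delta\boldsymbol{x}^{\boldsymbol{\lambda}}}|\,\Delta\boldsymbol{x'} = \frac{1}{H_{\boldsymbol{\lambda}}(\boldsymbol{b},\boldsymbol{a})}\,g_i^{\Delta_1}(x_1)$. Lemma \ref{chainp}, applied to the one-variable map $x_1 \mapsto F(g_1(x_1),\dots,g_m(x_1))$, bounds the resulting sum $\sum_i D_iF(\dots)g_i^{\Delta_1}(x_1)$ from above by $[F(g_1,\dots,g_m)]^{\Delta_1}(x_1)$; integrating this delta-derivative over $[a_1,b_1]_{\mathbb{T}_1}$ telescopes to $F(g_1(b_1),\dots,g_m(b_1)) - F(g_1(a_1),\dots,g_m(a_1))$. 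Since $g_i(a_1) = 0$ and $F(0,\dots,0) = 0$, only the term at $b_1$ survives, and substituting the value of $g_i(b_1)$ produces exactly the right-hand side of \eqref{ptTHEO3}, prefactor $1/H_{\boldsymbol{\lambda}}(\boldsymbol{b},\boldsymbol{a})$ included.

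The main obstacle is the first step: establishing the multivariate Taylor representation with kernel $H_{\boldsymbol{\lambda}}$ and, crucially, the monotonicity estimate $H_{\boldsymbol{\lambda}}(\boldsymbol{x},\boldsymbol{t}) \le H_{\boldsymbol{\lambda}}(\boldsymbol{b},\boldsymbol{a})$. Everything else is a bookkeeping repetition of Theorem \ref{THEO1}, but it is precisely this estimate that forces the two compensating factors $H_{\boldsymbol{\lambda}}(\boldsymbol{b},\boldsymbol{a})$ inside $F$ and $1/H_{\boldsymbol{\lambda}}(\boldsymbol{b},\boldsymbol{a})$ outside. It rests on the sign and monotonicity properties of the time-scale Taylor monomials $h^{(j)}_k$, of which the delicate point is the monotonicity in the second, $\sigma_j$-shifted argument; this must be invoked with care, as it is the one place where the time-scale setting differs nontrivially from the classical computation.
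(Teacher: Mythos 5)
Your proposal is correct and follows essentially the same route as the paper: Taylor's formula on time scales gives the representation with kernel $H_{\boldsymbol{\lambda}}$, the bound $H_{\boldsymbol{\lambda}}(\boldsymbol{x},\boldsymbol{t})\le H_{\boldsymbol{\lambda}}(\boldsymbol{b},\boldsymbol{a})$ produces an increasing majorant whose first-order derivative recovers $H_{\boldsymbol{\lambda}}(\boldsymbol{b},\boldsymbol{a})\,|\partial^{\boldsymbol{\lambda}}f_i/\Delta\boldsymbol{x}^{\boldsymbol{\lambda}}|$, and the monotonicity of the $D_iF$ reduces everything to the first-order case. The only cosmetic difference is that the paper defines the majorant $u_i(\boldsymbol{x})$ on all of $\Omega$ and then cites Theorem \ref{THEO1} as a black box, whereas you collapse immediately to a function of $x_1$ and re-run that theorem's argument inline; the two are the same computation.
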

\begin{proof}
By $f_i \in  \mathcal{L}^1_{\boldsymbol{a}}(\Omega, 1, \boldsymbol{\lambda}), i \in [1,m]_{\mathbb{N}},$ and Taylor's formula \cite{Hig}, we have
\[
f_i(\boldsymbol{x}) = \int_{\Omega_{\boldsymbol{x}}} H_{\boldsymbol{\lambda}}(\boldsymbol{x},\boldsymbol{t})\frac{\partial^{\boldsymbol{\lambda}}  f_i(\boldsymbol{t})}{\Delta \boldsymbol{t}^{\boldsymbol{\lambda}}} \Delta \boldsymbol{t}, \quad \boldsymbol{x} \in \Omega.
\]
Therefore, 
\[
|f_i(\boldsymbol{x})| \leq \int_{\Omega_{\boldsymbol{x}}} H_{\boldsymbol{\lambda}}(\boldsymbol{x},\boldsymbol{t})\bigg|\frac{\partial^{\boldsymbol{\lambda}}f_i(\boldsymbol{t}) }{\Delta \boldsymbol{t}^{\boldsymbol{\lambda}}}\bigg| \Delta \boldsymbol{t} \leq  H_{\boldsymbol{\lambda}}(\boldsymbol{b},\boldsymbol{a}) \int_{\Omega_{\boldsymbol{x}}}\bigg|\frac{\partial^{\boldsymbol{\lambda}}f_i(\boldsymbol{t}) }{\Delta \boldsymbol{t}^{\boldsymbol{\lambda}}}\bigg| \Delta \boldsymbol{t}, \quad \boldsymbol{x} \in \Omega.
\]
Now, we define
 \[  u_i(\boldsymbol{x}): =H_{\boldsymbol{\lambda}}(\boldsymbol{b},\boldsymbol{a}) \int_{\Omega_{\boldsymbol{x}}}\bigg|\frac{\partial^{\boldsymbol{\lambda}}f_i(\boldsymbol{t}) }{\Delta \boldsymbol{t}^{\boldsymbol{\lambda}}}\bigg| \Delta \boldsymbol{t}, \quad \boldsymbol{x}\in \Omega, \quad i \in [1, m]_{\mathbb{N}},  \]
we see that $ |f_i(\boldsymbol{x})| \leq u_i(\boldsymbol{x})$ for $\boldsymbol{x} \in \Omega$ and $u_i(\boldsymbol{x})$ are increasing in each variable and
\[
\bigg|\frac{\partial^{\boldsymbol{\lambda}}f_i(\boldsymbol{x}) }{\Delta \boldsymbol{x}^{\boldsymbol{\lambda}}}\bigg| = \frac{1}{H_{\boldsymbol{\lambda}}(\boldsymbol{b},\boldsymbol{a})} \frac{\partial^{\boldsymbol{1}}u_i(\boldsymbol{x}) }{\Delta \boldsymbol{x}^{\boldsymbol{1}}}, \quad \boldsymbol{x} \in \Omega,  \quad i \in [1, m]_{\mathbb{N}}.
\]

Since $D_iF$ for  $  i \in [1, m]_{\mathbb{N}}$, are non-negative, continuous and increasing in each variable on $(0, R)$, we have
\[
\begin{split}
&\int_{\Omega}\bigg(\sum^m_{i=1}{D_iF(|f_1(\boldsymbol{x})|, ..., |f_m(\boldsymbol{x})|)\bigg|\frac{\partial^{\boldsymbol{\lambda}}f_i(\boldsymbol{x}) }{\Delta \boldsymbol{x}^{\boldsymbol{\lambda}}}\bigg|}\bigg) \Delta \boldsymbol{x}\\
&\leq   \frac{1}{H_{\boldsymbol{\lambda}}(\boldsymbol{b},\boldsymbol{a})} \int_{\Omega}\bigg(\sum^m_{i=1}{D_iF(u_1(\boldsymbol{x}), ..., u_m(\boldsymbol{x}))\frac{\partial^{\boldsymbol{1}} u_i(\boldsymbol{x})}{\Delta \boldsymbol{x}^{\boldsymbol{1}}}}\bigg) \Delta \boldsymbol{x}, \\
\end{split}
\]
which, in view of \eqref{ptTHEO1}, gives \eqref{ptTHEO3}. \qed
\end{proof}
\begin{remark} 
Let $\mathbb{T} = \mathbb{R}$; then Theorem \ref{THEO3} becomes  \cite[Theorem  2.1]{AND} which was established by  Andri\'{c}.
 \end{remark}

Next, for $0<R\le \infty$, we denote by $\mathcal{G}^{1, m}_R$ the class of all functions $G: (-R, R)^m \to  \mathbb{R}$  satisfying the following conditions:
\begin{enumerate}
\item[(i)] $G \in C^1((-R, R)^m)$,
\item[(ii)] $G(0, ..., 0)=0$, and
\item[(iii)] if $x_i\leq y_i^{1/p}z_i^{1/q},  0<x_i, y_i, z_i<R$ for $ i \in [1, m]_{\mathbb{N}},$ then
 $0\leq D_iG(x_1, ..., x_m)\leq [D_iG(y_1, ..., y_m)]^{1/p}[D_iG(z_1, ..., z_m)]^{1/q}$,  where  $p$, $q$ are conjugate exponents
$1/p + 1/q = 1$.
\end{enumerate}

\begin{remark}
If $G \in \mathcal{G}^{1, m}_R$, then $G \in \mathcal{H}^m_R.$
\end{remark}
\begin{proof}
Let $G \in \mathcal{G}^{1, m}_R$. For each $i \in [1, m]_{\mathbb{N}}$ and  $0 <x_i \leq y_i <R,$ from $(iii)$ we have
\[
\begin{split} 
 0\leq D_iG(x_1, ..., x_i, ..., x_m)&\leq [D_iG(y_1, ..., y_i, ...,  y_m)]^{1/p}[D_iG(y_1, ..., y_i, ...,  y_m)]^{1/q}\\
&=D_iG(y_1, ..., y_i, ...,  y_m).  \\
\end{split}
 \] 
Therefore, $G \in \mathcal{H}^m_R.$
\end{proof}

\begin{example}
The functions $G(x_1, ..., x_m) = |x_1|^{\gamma_1}\sign(x_1)\cdot \cdot \cdot|x_m|^{\gamma_m}\sign(x_m)$,  $H(x_1, ..., x_m) = |x_1|^{\gamma_1}+ \cdot \cdot \cdot +|x_m|^{\gamma_m}$ for $\gamma_i \geq 1$ for all $i \in [1, m]_{\mathbb{N}}$ are in $\mathcal{G}^{1, m}_{\infty}$.
\end{example}
From on now, we always assume that $\alpha, \beta>0$ and $\alpha + \beta >1$ and $G \in \mathcal{G}^{1, m}_R$. We have the following result.
\begin{theorem}\label{THEO4}
Let $\omega_i, \tau_i, \in \mathcal{W}(\Omega)$, and $f_i : \Omega \to (-R, R)$ be such that 
\[\int_{\Omega}\bigg|\frac{\partial^{\boldsymbol{\lambda}} f_i(\textbf{x}) }{\Delta \textbf{x}^{\boldsymbol{\lambda}}}\bigg|^{\alpha+\beta}\tau_i(\textbf{x})\Delta \textbf{x}<R \quad \text{for} \quad  i \in [1,m]_{\mathbb{N}}. \]

If $f_i \in \mathcal{L}^{\alpha+\beta}_{\textbf{a}}(\Omega, \tau_i, \boldsymbol{\lambda})$ for all  $ i \in [1, m]_{\mathbb{N}}$ and
\begin{equation}\label{k2}
K_{\Omega}:=\bigg[\int_{\Omega}\bigg(\sum^m_{i=1}{[D_iG(V_1(\textbf{x}), ..., V_m(\textbf{x}))]^{{\frac{\alpha(\alpha+\beta-1)}{\beta}}}\omega^{\frac{\alpha+\beta}{\beta}}_i(\textbf{x})\tau^{-\frac{\alpha}{\beta}}_i(\textbf{x})  }\bigg)\Delta \textbf{x}\bigg]^{\frac{\beta}{\alpha+\beta}}<\infty,
\end{equation}
where
\[V_i(\textbf{x}): =\int_{\Omega_\textbf{x}} \bigg(H_{\boldsymbol{\lambda}}(\textbf{x},\textbf{t})\bigg)^{\frac{\alpha+\beta}{\alpha+\beta-1}}(\tau_i(\textbf{t}))^{\frac{1}{1-\alpha-\beta}}\Delta \textbf{t} \]
for  $\textbf{ x }\in \Omega,  i \in [1, m]_{\mathbb{N}}$, then
\begin{equation}\label{ptTHEO4}
\begin{split}
&\int_{\Omega}\bigg(\sum^m_{i=1}{[D_iG(|f_1(\textbf{x})|, ..., |f_m(\textbf{x})|)]^\alpha\bigg|\frac{\partial^{\boldsymbol{\lambda}}f_i(\textbf{x}) }{\Delta \textbf{x}^{\boldsymbol\lambda}}\bigg|^\alpha\omega_i(\textbf{x})}\bigg) \Delta \textbf{x}   \\
&\leq K_{\Omega} \bigg[G\bigg(\int_{\Omega}\bigg|\frac{\partial^{\boldsymbol{\lambda}} f_1(\textbf{x}) }{\Delta \textbf{x}^{\boldsymbol{\lambda}}}\bigg|^{\alpha+\beta}\tau_1(\textbf{x})\Delta \textbf{x}, ..., \int_{\Omega}\bigg|\frac{\partial^{\boldsymbol{\lambda}}f_m(\textbf{x})  }{\Delta \textbf{x}^{\boldsymbol{\lambda}}}\bigg|^{\alpha+\beta}\tau_m(\textbf{x})\Delta \textbf{x}  \bigg)\bigg]^{\frac{\alpha}{\alpha+\beta}}. \\
\end{split}
\end{equation}
\end{theorem}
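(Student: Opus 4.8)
The plan is to run a two-step Hölder argument and then collapse the result onto Theorem~\ref{THEO1} applied to suitable primitives. First I would represent each $f_i$ by Taylor's formula exactly as in the proof of Theorem~\ref{THEO3}, obtaining
\[
|f_i(\boldsymbol{x})| \le \int_{\Omega_{\boldsymbol{x}}} H_{\boldsymbol{\lambda}}(\boldsymbol{x},\boldsymbol{t}) \left|\frac{\partial^{\boldsymbol{\lambda}}f_i(\boldsymbol{t})}{\Delta \boldsymbol{t}^{\boldsymbol{\lambda}}}\right| \Delta \boldsymbol{t}, \quad \boldsymbol{x}\in\Omega.
\]
Writing the integrand as $\bigl(H_{\boldsymbol{\lambda}}(\boldsymbol{x},\boldsymbol{t})\tau_i(\boldsymbol{t})^{-1/(\alpha+\beta)}\bigr)\bigl(|\partial^{\boldsymbol{\lambda}}f_i(\boldsymbol{t})/\Delta\boldsymbol{t}^{\boldsymbol{\lambda}}|\,\tau_i(\boldsymbol{t})^{1/(\alpha+\beta)}\bigr)$ and applying Hölder's inequality on $\Omega_{\boldsymbol{x}}$ with the conjugate exponents $\tfrac{\alpha+\beta}{\alpha+\beta-1}$ and $\alpha+\beta$ produces
\[
|f_i(\boldsymbol{x})| \le V_i(\boldsymbol{x})^{\frac{\alpha+\beta-1}{\alpha+\beta}} \, W_i(\boldsymbol{x})^{\frac{1}{\alpha+\beta}}, \quad W_i(\boldsymbol{x}) := \int_{\Omega_{\boldsymbol{x}}} \left|\frac{\partial^{\boldsymbol{\lambda}}f_i(\boldsymbol{t})}{\Delta\boldsymbol{t}^{\boldsymbol{\lambda}}}\right|^{\alpha+\beta}\tau_i(\boldsymbol{t})\,\Delta\boldsymbol{t},
\]
where $V_i$ is precisely the function in the statement (the exponent $-1/(\alpha+\beta-1)$ on $\tau_i$ equals $1/(1-\alpha-\beta)$).

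Next I would exploit the defining property (iii) of the class $\mathcal{G}^{1,m}_R$. Since $\tfrac{\alpha+\beta-1}{\alpha+\beta}$ and $\tfrac{1}{\alpha+\beta}$ are conjugate, taking $x_i=|f_i(\boldsymbol{x})|$, $y_i=V_i(\boldsymbol{x})$, $z_i=W_i(\boldsymbol{x})$ in (iii) and raising to the power $\alpha$ gives
\[
\bigl[D_iG(|f_1(\boldsymbol{x})|,\dots,|f_m(\boldsymbol{x})|)\bigr]^{\alpha} \le \bigl[D_iG(V_1(\boldsymbol{x}),\dots,V_m(\boldsymbol{x}))\bigr]^{\frac{\alpha(\alpha+\beta-1)}{\alpha+\beta}} \bigl[D_iG(W_1(\boldsymbol{x}),\dots,W_m(\boldsymbol{x}))\bigr]^{\frac{\alpha}{\alpha+\beta}}.
\]
Inserting this into the left-hand side of \eqref{ptTHEO4}, I would apply Hölder's inequality once more, now over the product of the counting measure on $[1,m]_{\mathbb{N}}$ with $\Delta\boldsymbol{x}$, using the conjugate exponents $\tfrac{\alpha+\beta}{\beta}$ and $\tfrac{\alpha+\beta}{\alpha}$. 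The crucial device is to split the $i$-th integrand as $A_iB_i$ with the compensating factor $\tau_i^{-\alpha/(\alpha+\beta)}\cdot\tau_i^{\alpha/(\alpha+\beta)}=1$, putting $[D_iG(V_1,\dots,V_m)]^{\alpha(\alpha+\beta-1)/(\alpha+\beta)}\omega_i\tau_i^{-\alpha/(\alpha+\beta)}$ into $A_i$ and $[D_iG(W_1,\dots,W_m)]^{\alpha/(\alpha+\beta)}|\partial^{\boldsymbol{\lambda}}f_i/\Delta\boldsymbol{x}^{\boldsymbol{\lambda}}|^{\alpha}\tau_i^{\alpha/(\alpha+\beta)}$ into $B_i$. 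Raising $A_i$ to the power $\tfrac{\alpha+\beta}{\beta}$ reproduces the integrand of $K_{\Omega}$ verbatim, so the first Hölder factor is exactly $K_{\Omega}$; raising $B_i$ to the power $\tfrac{\alpha+\beta}{\alpha}$ yields $D_iG(W_1,\dots,W_m)\,|\partial^{\boldsymbol{\lambda}}f_i/\Delta\boldsymbol{x}^{\boldsymbol{\lambda}}|^{\alpha+\beta}\tau_i$.

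Finally I would recognize the second Hölder factor as an instance of Theorem~\ref{THEO1}. Setting $\phi_i(\boldsymbol{x})=|\partial^{\boldsymbol{\lambda}}f_i(\boldsymbol{x})/\Delta\boldsymbol{x}^{\boldsymbol{\lambda}}|^{\alpha+\beta}\tau_i(\boldsymbol{x})\ge 0$, the defining iterated integral gives $\partial^{\boldsymbol{1}}W_i/\Delta\boldsymbol{x}^{\boldsymbol{1}}=\phi_i$, while the hypothesis $\int_{\Omega}\phi_i\,\Delta\boldsymbol{x}<R$ guarantees $W_i:\Omega\to[0,R)$ and $W_i\in\mathcal{L}^1_{\boldsymbol{a}}(\Omega,1,\boldsymbol{1})$. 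Because every $G\in\mathcal{G}^{1,m}_R$ lies in $\mathcal{H}^m_R$, Theorem~\ref{THEO1} applies to $G$ and the functions $W_i$ and yields $\sum_{i=1}^m\int_{\Omega}D_iG(W_1,\dots,W_m)\phi_i\,\Delta\boldsymbol{x}\le G\bigl(\int_{\Omega}\phi_1\,\Delta\boldsymbol{x},\dots,\int_{\Omega}\phi_m\,\Delta\boldsymbol{x}\bigr)$, whose right-hand side is exactly the $G$-term of \eqref{ptTHEO4} before the outer power $\tfrac{\alpha}{\alpha+\beta}$. Multiplying the two Hölder factors then gives \eqref{ptTHEO4}. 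The main obstacle is the bookkeeping: one must pick the two pairs of conjugate exponents and distribute the weights $\tau_i$ so that the first factor collapses precisely to $K_{\Omega}$ and the second becomes the exact expression handled by Theorem~\ref{THEO1}; the genuine content sits only in property (iii) of $\mathcal{G}^{1,m}_R$ and in reusing Theorem~\ref{THEO1} for the primitives $W_i$.
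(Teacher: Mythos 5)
Your proposal is correct and follows essentially the same route as the paper's own proof: Taylor's formula, Hölder with exponents $\tfrac{\alpha+\beta}{\alpha+\beta-1}$ and $\alpha+\beta$ to produce $V_i$ and the primitives $W_i$ (the paper's $U_i$), property (iii) of $\mathcal{G}^{1,m}_R$, a second Hölder step with exponents $\tfrac{\alpha+\beta}{\beta}$ and $\tfrac{\alpha+\beta}{\alpha}$, and a final appeal to Theorem~\ref{THEO1}. The only cosmetic difference is that you perform the second Hölder step once over the product of the counting measure with $\Delta\boldsymbol{x}$, whereas the paper splits it into a sum-Hölder followed by an integral-Hölder; the resulting bound is identical.
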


\begin{proof}
As in the proof of Theorem \ref{THEO3}, for any $\boldsymbol{x} \in \Omega$ and all $i \in [1, m]_{\mathbb{N}}$, we have
\begin{equation}\label{eq36}
|f_i(\boldsymbol{x})| \leq \int_{\Omega_{\boldsymbol{x}}} H_{\boldsymbol{\lambda}}(\boldsymbol{x},\boldsymbol{t})\bigg|\frac{\partial^{\boldsymbol{\lambda}} f_i(\boldsymbol{t})}{\Delta \boldsymbol{t}^{\boldsymbol{\lambda}}}\bigg| \Delta \boldsymbol{t}.
\end{equation}
Applying H\"{o}lder's inequality with  indices $(\alpha+\beta)/(\alpha+\beta-1)$ and $(\alpha+\beta)$ to \eqref{eq36}, we get
\begin{align}
\notag|f_i(\boldsymbol{x})| &\leq  \bigg(\int_{\Omega_{\boldsymbol{x}}} \bigg(H_{\boldsymbol{\lambda}}(\boldsymbol{x},\boldsymbol{t})\bigg)^{\frac{\alpha+\beta}{\alpha+\beta-1}}(\tau_i(\boldsymbol{t}))^{\frac{1}{1-\alpha-\beta}} \Delta \boldsymbol{t}\bigg)^{\frac{\alpha+\beta-1}{\alpha+\beta}} \\
\notag&\quad \times  \bigg(\int_{\Omega_{\boldsymbol{x}}}\bigg|\frac{\partial^{\boldsymbol{\lambda}}f_i(\boldsymbol{t})  }{\Delta \boldsymbol{t}^{\boldsymbol{\lambda}}}\bigg|^{\alpha+\beta}  \tau_i(\boldsymbol{t}) \Delta t\bigg)^{\frac{1}{\alpha+\beta}}\\
 &=: \bigg(V_i(\boldsymbol{x})\bigg)^{\frac{\alpha+\beta-1}{\alpha+\beta}} \bigg(U_i(\boldsymbol{x})\bigg)^{\frac{1}{\alpha+\beta}},
\label{eqfa}
\end{align}
where
\[U_i(\boldsymbol{x}) =\int_{\Omega_{\boldsymbol{x}}}\bigg|\frac{\partial^{\boldsymbol{\lambda}}f_i(\boldsymbol{t}) }{\Delta \boldsymbol{t}^{\boldsymbol{\lambda}}}\bigg|^{\alpha+\beta}\tau_i(\boldsymbol{t}) \Delta \boldsymbol{t},  \quad \boldsymbol{x} \in \Omega, \quad  i \in [1, m]_{\mathbb{N}}.\]
Thus, since $G\in \mathcal{G}^{1, m}_R$ from \eqref{eqfa}  we obtain
\[ 
\begin{split}
D_iG(|f_1(\boldsymbol{x})|, ..., |f_m(\boldsymbol{x})|) \leq  [&D_iG(V_1(\boldsymbol{x}), ..., V_m(\boldsymbol{x}) )]^{\frac{\alpha+\beta-1}{\alpha+\beta}}\\
& \times [D_iG(U_1(\boldsymbol{x}), ..., U_m(\boldsymbol{x}))]^{\frac{1}{\alpha+\beta}};\\
\end{split}\]
hence
\begin{equation}\label{eq37}
\begin{split}
&\sum^m_{i=1}{[D_iG(|f_1(\boldsymbol{x})|, ..., |f_m(\boldsymbol{x})|)]^\alpha\bigg|\frac{\partial^{\boldsymbol{\lambda}} f_i(\boldsymbol{x}) }{\Delta \boldsymbol{x}^{\boldsymbol{\lambda}}}\bigg|^\alpha\omega_i(\boldsymbol{x})}\\
& \leq \sum^m_{i=1}[D_iG(V_1(\boldsymbol{x}), ..., V_m(\boldsymbol{x}))]^{\frac{\alpha(\alpha+\beta-1)}{\alpha+\beta}}\\
& \quad  \times [D_iG(U_1(\boldsymbol{x}), ..., U_m(\boldsymbol{x}))]^{\frac{\alpha}{\alpha+\beta}}\bigg|\frac{\partial^{\boldsymbol{\lambda}} f_i(\boldsymbol{x}) }{\Delta \boldsymbol{x}^{\boldsymbol{\lambda}}}\bigg|^\alpha\omega_i(\boldsymbol{x}).\\
\end{split}
\end{equation}
By applying H\"{o}lder's inequality for sum with  indices $(\alpha+\beta)/\beta$ and $(\alpha+\beta)/\alpha$ to \eqref{eq37}, we get
\begin{equation}\label{eqsum}
\begin{split}
&\sum^m_{i=1}{[D_iG(|f_1(\boldsymbol{x})|, ..., |f_m(\boldsymbol{x})|)]^\alpha\bigg|\frac{\partial^{\boldsymbol{\lambda}} f_i(\boldsymbol{x})}{\Delta \boldsymbol{x}^{\boldsymbol{\lambda}}}\bigg|^\alpha}\omega_i(\boldsymbol{x})\\
 &\leq \bigg(\sum^m_{i=1}{[D_iG(V_1(\boldsymbol{x}), ..., V_m(\boldsymbol{x})) ]^{ \frac{\alpha(\alpha+\beta-1)}{\beta}    }\omega^{\frac{\alpha+\beta}{\beta}}_i(\boldsymbol{x})\tau^{-\frac{\alpha}{\beta}}_i(\boldsymbol{x})}\bigg)^{\frac{\beta}{\alpha+\beta}}\\
&\quad \times\bigg(\sum^m_{i=1}{D_iG(U_1(\boldsymbol{x}), ..., U_m(\boldsymbol{x}))\bigg|\frac{\partial^{\boldsymbol{\lambda}}f_i(\boldsymbol{x}) }{\Delta \boldsymbol{x}^{\boldsymbol{\lambda}}}\bigg|^{\alpha+\beta}}\tau_i(\boldsymbol{x})  \bigg)^{{\frac{\alpha}{\alpha+\beta}}}.\\
\end{split}
\end{equation}
Integrating both sides of \eqref{eqsum} with respect to $\boldsymbol{x}$ over $\Omega$ and using H\"{o}lder's inequality  with  indices $(\alpha+\beta)/\beta$ and $(\alpha+\beta)/\alpha$ give
\begin{equation}
\begin{split}
&\int_{\Omega}\bigg(\sum^m_{i=1}{[D_iG(|f_1(\boldsymbol{x})|, ..., |f_m(\boldsymbol{x})|)]^\alpha\bigg|\frac{\partial^{\boldsymbol{\lambda}} f_i(\boldsymbol{x})}{\Delta \boldsymbol{x}^{\boldsymbol{\lambda}}}\bigg|^\alpha}\omega_i(\boldsymbol{x})\bigg)\Delta \boldsymbol{x}\\
&\leq  \bigg[\int_{\Omega}\bigg(\sum^m_{i=1}{[D_iG(V_1(\boldsymbol{x}), ..., V_m(\boldsymbol{x}))]^{ \frac{\alpha(\alpha+\beta-1)}{\beta}  } \omega^{\frac{\alpha+\beta}{\beta}}_i(\boldsymbol{x})\tau^{-\frac{\alpha}{\beta}}_i(\boldsymbol{x})  }\bigg)\Delta \boldsymbol{x}\bigg]^{\frac{\beta}{\alpha+\beta}}\\
&\quad \times \bigg[ \int_{\Omega}\bigg(\sum^m_{i=1}{D_iG(U_1(\boldsymbol{x}), ..., U_m(\boldsymbol{x}))\frac{\partial^{\boldsymbol{1}} U_i(\boldsymbol{x}) }{\Delta \boldsymbol{x}^{\boldsymbol{1}}}}\bigg) \Delta \boldsymbol{x}   \bigg]^{\frac{\alpha}{\alpha+\beta}}, \\
\end{split}
\end{equation}
which, in view of \eqref{ptTHEO1}, yields \eqref{ptTHEO4}. This concludes the proof. \qed
\end{proof}
\begin{remark}\label{Re1}
\begin{enumerate}
\item In the special case when  $n=m=1, G(x) =|x|^{(\alpha+\beta)/\alpha}$, $ \omega_1= \tau_1\equiv 1$, $\boldsymbol{\lambda} =\lambda_1,$ then inequality \eqref{ptTHEO4} becomes  \cite[Theorem 3.2]{Lihan}.
\item  If $n= \boldsymbol{\lambda}=\alpha = 1, m=2, G(x_1, x_2) = |x_1x_2|, \beta = p-1,$ for $1<p\leq 2,$
$\omega_1=\psi^{\sigma}$, $\tau_1=\phi [\psi^{\sigma}]^{p/2}$, where $\psi^{\sigma} = \psi \circ \sigma$, $\phi, \psi \in \mathcal{W}([a,b]_{\mathbb T})$ and $\psi$ is decreasing  on $[a,b]_{\mathbb T}$, then inequality \eqref{ptTHEO4} reduces to
\[
\begin{split}
&\int_a^b \psi^{\sigma}(x)(|f_1^{\Delta}(x)f_2(x)|+|f_1(x)f_2^{\Delta}(x)|)\Delta x\\
& \le \frac{K(a, b)}{2^{2/p}}\left[\int_a^b\left(|f_1^{\Delta}(x)|^{p}+|f_2^{\Delta}(x)|^{p}\right)\phi(x) [\psi^{\sigma}(x)]^{p/2}\Delta x \right]^{2/p},\\
\end{split}
\]
where 
\begin{align} 
\notag K(a, b)&=\left[2\int_a^b\frac{[\psi^{\sigma}(x)]^{q/2}}{\phi^{q/p}(x)}\left(\int_a^x\frac{\Delta t}{\phi^{q/p}(t)[\psi^{\sigma}(t)]^{q/2}}\right)\Delta x \right]^{1/q}\\
 \notag& \le \left[2\int_a^b\frac{1}{\phi^{q/p}(x)}\left(\int_a^x\frac{\Delta t}{\phi^{q/p}(t)}\right)\Delta x \right]^{1/q}\\
\label{eqRe1}&\le \left[\int_a^b\frac{\Delta x}{\phi^{q/p}(x)} \right]^{2/q},
\end{align} 
where $p, q$ are conjugate exponents. Therefore, Theorem \ref{THEO4}  improves and generalizes \cite[Theorem 3.1]{YZ}. 
\item Note that when $\mathbb{T} = \mathbb{R}$,   $n =m =2, \alpha =s \geq 1, \beta = 2r+s $ for $r \geq 0$, $  G(x_1,x_2) = |x_1x_2|^{(r+s)/s}, \omega_1 = \omega_2 = \tau_1 = \tau_2=\omega,$ where $\omega$ is decreasing in each variables,  we see that inequality \eqref{ptTHEO4} improves   \cite[Theorem 1]{Che}.
\end{enumerate}
\end{remark}

From Theorem \ref{THEO4} we have the following result.
\begin{corollary}
Assume that conditions in Theorem \ref{THEO4} hold, then
\begin{equation}\label{coro4}
\begin{split}
&\int_{\Omega}\bigg(\sum^m_{i=1}{\bigg[D_iG\bigg(\bigg|\frac{\partial^{\boldsymbol{\xi}_1}f_1(\textbf{x})  }{\Delta \textbf{x}^{\boldsymbol{\xi}_1}}\bigg|, ..., \bigg|\frac{\partial^{\boldsymbol{\xi}_m}f_m(\textbf{x})  }{\Delta \textbf{x}^{\boldsymbol{\xi}_m}}\bigg|     \bigg)\bigg]^\alpha\bigg|\frac{\partial^{\boldsymbol{\lambda}}f_i(\textbf{x}) }{\Delta \textbf{x}^{\boldsymbol\lambda}}\bigg|^\alpha\omega_i(\textbf{x})}\bigg) \Delta \textbf{x}   \\
&\leq \hat{K}_{\Omega} \bigg[G\bigg(\int_{\Omega}\bigg|\frac{\partial^{\boldsymbol{\lambda}} f_1(\textbf{x}) }{\Delta \textbf{x}^{\boldsymbol{\lambda}}}\bigg|^{\alpha+\beta}\tau_1(\textbf{x})\Delta \textbf{x}, ..., \int_{\Omega}\bigg|\frac{\partial^{\boldsymbol{\lambda}}f_m(\textbf{x})  }{\Delta \textbf{x}^{\boldsymbol{\lambda}}}\bigg|^{\alpha+\beta}\tau_m(\textbf{x})\Delta \textbf{x}  \bigg)\bigg]^{\frac{\alpha}{\alpha+\beta}} \\
\end{split}
\end{equation}
for  $\boldsymbol{\xi}_i \leq \boldsymbol{\lambda}, i \in [1, n]_{\mathbb{N}}$, 
where  $\hat{K}_{\Omega} $ is obtained by substituting 
\[ V_i(\textbf{x}): =\int_{\Omega_\textbf{x}}(H_{\boldsymbol{\lambda} - \boldsymbol{\xi}_i}(\textbf{x},\textbf{t}))^{\frac{\alpha+\beta}{\alpha+\beta-1}}(\tau_i(\textbf{t}))^{\frac{1}{1-\alpha-\beta}}\Delta \textbf{t}\] into \eqref{k2}.
\end{corollary}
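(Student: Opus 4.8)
The plan is to reduce the corollary to Theorem \ref{THEO4} by substituting for $f_i$ its partial derivative $\frac{\partial^{\boldsymbol{\xi}_i}f_i}{\Delta \boldsymbol{x}^{\boldsymbol{\xi}_i}}$. The entire proof of Theorem \ref{THEO4} hinges on the pointwise bound \eqref{eq36}, which in turn comes from Taylor's formula. Accordingly, the first step is to establish the analogous estimate
\[
\bigg|\frac{\partial^{\boldsymbol{\xi}_i}f_i(\boldsymbol{x})}{\Delta \boldsymbol{x}^{\boldsymbol{\xi}_i}}\bigg| \le \int_{\Omega_{\boldsymbol{x}}} H_{\boldsymbol{\lambda}-\boldsymbol{\xi}_i}(\boldsymbol{x},\boldsymbol{t}) \bigg|\frac{\partial^{\boldsymbol{\lambda}}f_i(\boldsymbol{t})}{\Delta \boldsymbol{t}^{\boldsymbol{\lambda}}}\bigg| \Delta \boldsymbol{t}, \quad \boldsymbol{x}\in\Omega,\quad i\in[1,m]_{\mathbb{N}},
\]
by applying Taylor's formula \cite{Hig} to the function $g_i:=\frac{\partial^{\boldsymbol{\xi}_i}f_i}{\Delta \boldsymbol{x}^{\boldsymbol{\xi}_i}}$ with the multi-index $\boldsymbol{\lambda}-\boldsymbol{\xi}_i$, and noting that $\frac{\partial^{\boldsymbol{\lambda}-\boldsymbol{\xi}_i}g_i}{\Delta \boldsymbol{t}^{\boldsymbol{\lambda}-\boldsymbol{\xi}_i}}=\frac{\partial^{\boldsymbol{\lambda}}f_i}{\Delta \boldsymbol{t}^{\boldsymbol{\lambda}}}$.

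The step that genuinely requires justification, and which I expect to be the main obstacle, is verifying that $g_i$ satisfies the boundary conditions demanded by Taylor's formula, namely $\frac{\partial^{k_j}g_i}{\Delta_j x_j^{k_j}}\big|_{x_j=a_j}=0$ for $k_j\in[0,\lambda_j-\xi_{ij}-1]_{\mathbb{N}}$ and each $j\in[1,n]_{\mathbb{N}}$, where $\xi_{ij}$ denotes the $j$-th component of $\boldsymbol{\xi}_i$. I would argue this by commuting the mixed partial delta derivatives, which is legitimate since $f_i\in C^{n\boldsymbol{\lambda}}_{\mathrm{rd}}(\Omega)$, so as to rewrite $\frac{\partial^{k_j}g_i}{\Delta_j x_j^{k_j}}$ as the operator $\frac{\partial^{\xi_{ij}+k_j}}{\Delta_j x_j^{\xi_{ij}+k_j}}$ applied after the remaining pure derivatives $\prod_{l\neq j}\frac{\partial^{\xi_{il}}}{\Delta_l x_l^{\xi_{il}}}$. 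Because $\xi_{ij}+k_j\le \lambda_j-1$, the membership $f_i\in\mathcal{L}^{\alpha+\beta}_{\boldsymbol{a}}(\Omega,\tau_i,\boldsymbol{\lambda})$ forces $\frac{\partial^{\xi_{ij}+k_j}f_i}{\Delta_j x_j^{\xi_{ij}+k_j}}\big|_{x_j=a_j}=0$; differentiating this identity in the remaining variables $x_l$ commutes with fixing $x_j=a_j$ and so preserves the vanishing. This is precisely where the hypothesis $\boldsymbol{\xi}_i\le\boldsymbol{\lambda}$ and the definition of $\mathcal{L}^{\alpha+\beta}_{\boldsymbol{a}}$ interact, and it is the only genuinely new ingredient.

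With this bound in hand, the remainder runs verbatim along the proof of Theorem \ref{THEO4}. I would apply H\"older's inequality with indices $(\alpha+\beta)/(\alpha+\beta-1)$ and $\alpha+\beta$ to factor the right-hand side as $\bigl(V_i(\boldsymbol{x})\bigr)^{(\alpha+\beta-1)/(\alpha+\beta)}\bigl(U_i(\boldsymbol{x})\bigr)^{1/(\alpha+\beta)}$, where $V_i$ now carries the kernel $H_{\boldsymbol{\lambda}-\boldsymbol{\xi}_i}$ in place of $H_{\boldsymbol{\lambda}}$ and $U_i$ is unchanged. Feeding this into $D_iG$ through condition $(iii)$ of $\mathcal{G}^{1,m}_R$, applying H\"older for sums with indices $(\alpha+\beta)/\beta$ and $(\alpha+\beta)/\alpha$, integrating over $\Omega$, and invoking Theorem \ref{THEO1} exactly as before yields \eqref{coro4} with the constant $\hat{K}_{\Omega}$ obtained by the indicated substitution into \eqref{k2}. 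No further ideas are needed beyond the modified kernel, so the bulk of the argument is a direct transcription of the Theorem \ref{THEO4} computation.
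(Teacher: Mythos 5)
Your proposal is correct and is precisely the intended derivation: the paper states this corollary without proof as an immediate consequence of Theorem \ref{THEO4}, and the implicit argument is exactly yours, namely applying Taylor's formula to $\frac{\partial^{\boldsymbol{\xi}_i}f_i}{\Delta \boldsymbol{x}^{\boldsymbol{\xi}_i}}$ to replace the kernel $H_{\boldsymbol{\lambda}}$ by $H_{\boldsymbol{\lambda}-\boldsymbol{\xi}_i}$ in the pointwise bound and then repeating the H\"older computation verbatim. Your verification of the boundary conditions for the differentiated function (via commuting mixed partials and the vanishing of the pure derivatives on $\{x_j=a_j\}$) is the one detail the paper glosses over, and you handle it correctly.
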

\begin{remark}
 Inequality \eqref{coro4} is the same as  \cite[Theorem 2.10]{SAK1}, if we take  $n =m=1, G(x) = |x|^\gamma, \gamma \geq 1$.
\end{remark}

The following theorem is similar to Theorem \ref{THEO4}.
\begin{theorem}\label{THEO4'}
Let $\omega_i, \tau_i \in \mathcal{W}(\Omega^{\kappa^{\boldsymbol{\lambda-1}}})$, and  $f_i : \Omega \to (-R, R)$ which satisfies
\[\int_{\Omega^{\kappa^{\boldsymbol{\lambda-1}}}}\bigg|\frac{\partial^{\boldsymbol{\lambda}} f_i(\textbf{x}) }{\Delta \textbf{x}^{\boldsymbol{\lambda}}}\bigg|^{\alpha+\beta}\tau_i(\textbf{x})\Delta \textbf{x}<R \quad \text{for} \quad  i \in [1,m]_{\mathbb{N}}. \]

 If  $f_i \in \mathcal{L}^{\alpha+\beta}_{{\boldsymbol \rho}^{\boldsymbol{\lambda}- \textbf{1}}(\textbf{b})}(\Omega^{\kappa^{\boldsymbol{\lambda-1}}}, \tau_i, \boldsymbol{\lambda})$ for all  $ i \in [1,m]_{\mathbb{N}}$ and
\begin{equation}\label{k2'}
K^*_{\Omega^{\kappa^{\boldsymbol{\lambda-1}}}}:=\bigg[\int_{\Omega^{\kappa^{\boldsymbol{\lambda-1}}}}\bigg(\sum^m_{i=1}{[D_iG(V^*_1(\textbf{x}), ..., V^*_m(\textbf{x}))]^{ {\frac{\alpha(\alpha+\beta-1)}{\beta}}  }\omega^{\frac{\alpha+\beta}{\beta}}_i(\textbf{x})\tau^{-\frac{\alpha}{\beta}}_i(\textbf{x})  }\bigg)\Delta \textbf{x}\bigg]^{\frac{\beta}{\alpha+\beta}}
\end{equation}
is finite, where
\[V^*_i(\textbf{x}) :=\int_{\bar{\Omega}_\textbf{x}} |H_{\boldsymbol{\lambda}}(\textbf{x},\textbf{t})|^{\frac{\alpha+\beta}{\alpha+\beta-1}}(\tau_i(\textbf{t}))^{\frac{1}{1-\alpha-\beta}}\Delta \textbf{t}   \]
for $\textbf{x} \in \Omega^{\kappa^{\boldsymbol{\lambda-1}}},  i \in [1, m]_{\mathbb{N}},$   then
\begin{equation}\label{eq4'}
\begin{split}
&\int_{\Omega^{\kappa^{\boldsymbol{\lambda-1}}}}\bigg(\sum^m_{i=1}{[D_iG(|f_1(\textbf{x})|, ..., |f_m(\textbf{x})|)]^\alpha\bigg|\frac{\partial^{\boldsymbol{\lambda}}f_i(\textbf{x}) }{\Delta \textbf{x}^{\boldsymbol\lambda}}\bigg|^\alpha\omega_i(\textbf{x})}\bigg) \Delta \textbf{x} \leq K^*_{\Omega^{\kappa^{\boldsymbol{\lambda-1}}}}  \\
&\quad \times \bigg[G\bigg(\int_{\Omega^{\kappa^{\boldsymbol{\lambda-1}}}}\bigg|\frac{\partial^{\boldsymbol{\lambda}} f_1(\textbf{x}) }{\Delta \textbf{x}^{\boldsymbol{\lambda}}}\bigg|^{\alpha+\beta}\tau_1(\textbf{x})\Delta \textbf{x}, ..., \int_{\Omega^{\kappa^{\boldsymbol{\lambda-1}}}}\bigg|\frac{\partial^{\boldsymbol{\lambda}}f_m(\textbf{x})  }{\Delta \textbf{x}^{\boldsymbol{\lambda}}}\bigg|^{\alpha+\beta}\tau_m(\textbf{x})\Delta \textbf{x}  \bigg)\bigg]^{\frac{\alpha}{\alpha+\beta}}. \\
\end{split}
\end{equation}
\end{theorem}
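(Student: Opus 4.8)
The plan is to follow the proof of Theorem \ref{THEO4} almost verbatim, systematically replacing each ``forward'' object by its ``backward'' counterpart: the lower corner $\boldsymbol{a}$ by the upper corner ${\boldsymbol\rho}^{\boldsymbol{\lambda}-\boldsymbol{1}}(\boldsymbol{b})$, the region $\Omega$ by $\Omega^{\kappa^{\boldsymbol{\lambda-1}}}$, the cells $\Omega_{\boldsymbol{x}}$ by $\bar{\Omega}_{\boldsymbol{x}}$, and $V_i$ by $V^*_i$. First I would record the backward Taylor expansion: since $f_i \in \mathcal{L}^{\alpha+\beta}_{{\boldsymbol\rho}^{\boldsymbol{\lambda}-\boldsymbol{1}}(\boldsymbol{b})}$, all partial derivatives of order $0$ through $\lambda_j-1$ in the $j$-th variable vanish at ${\boldsymbol\rho}^{\boldsymbol{\lambda}-\boldsymbol{1}}(\boldsymbol{b})$, and Taylor's formula \cite{Hig} expanded from the upper corner yields the analogue of \eqref{eq36},
\[ |f_i(\boldsymbol{x})| \le \int_{\bar{\Omega}_{\boldsymbol{x}}}|H_{\boldsymbol{\lambda}}(\boldsymbol{x},\boldsymbol{t})|\,\bigg|\frac{\partial^{\boldsymbol{\lambda}}f_i(\boldsymbol{t})}{\Delta \boldsymbol{t}^{\boldsymbol{\lambda}}}\bigg|\,\Delta \boldsymbol{t}, \quad \boldsymbol{x}\in \Omega^{\kappa^{\boldsymbol{\lambda-1}}}. \]
The absolute value around $H_{\boldsymbol{\lambda}}$ is essential here, because on the upper cell $\bar{\Omega}_{\boldsymbol{x}}$ the factors $h^{(j)}_{\lambda_j-1}(x_j,\sigma_j(t_j))$ need no longer be of one sign; this is exactly why $V^*_i$ is defined with $|H_{\boldsymbol{\lambda}}|$.

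From this point the argument is parallel to Theorem \ref{THEO4}. I would apply H\"older's inequality with indices $(\alpha+\beta)/(\alpha+\beta-1)$ and $\alpha+\beta$ to obtain $|f_i(\boldsymbol{x})| \le (V^*_i(\boldsymbol{x}))^{(\alpha+\beta-1)/(\alpha+\beta)}(U^*_i(\boldsymbol{x}))^{1/(\alpha+\beta)}$, where $U^*_i(\boldsymbol{x}) = \int_{\bar{\Omega}_{\boldsymbol{x}}}|\partial^{\boldsymbol{\lambda}}f_i/\Delta\boldsymbol{t}^{\boldsymbol{\lambda}}|^{\alpha+\beta}\tau_i\,\Delta \boldsymbol{t}$; then use $G\in \mathcal{G}^{1,m}_R$ to split $D_iG(|f_1|,\ldots,|f_m|)$ into a product of powers of $D_iG(V^*_1,\ldots)$ and $D_iG(U^*_1,\ldots)$, apply H\"older for sums with indices $(\alpha+\beta)/\beta$ and $(\alpha+\beta)/\alpha$, integrate over $\Omega^{\kappa^{\boldsymbol{\lambda-1}}}$, and apply H\"older once more. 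Exactly as in Theorem \ref{THEO4}, the factor carrying $V^*_i$ assembles into $K^*_{\Omega^{\kappa^{\boldsymbol{\lambda-1}}}}$, and the whole estimate reduces to the backward analogue of \eqref{ptTHEO1} for the accumulated functions $U^*_i$. Noting that $|\partial^{\boldsymbol{1}}U^*_i/\Delta\boldsymbol{x}^{\boldsymbol{1}}| = |\partial^{\boldsymbol{\lambda}}f_i/\Delta\boldsymbol{x}^{\boldsymbol{\lambda}}|^{\alpha+\beta}\tau_i$ (the sign $(-1)^n$ produced by differentiating an upper-cell integral being absorbed by the modulus) and that $U^*_i(\boldsymbol{a}) = \int_{\Omega^{\kappa^{\boldsymbol{\lambda-1}}}}|\partial^{\boldsymbol{\lambda}}f_i/\Delta\boldsymbol{x}^{\boldsymbol{\lambda}}|^{\alpha+\beta}\tau_i\,\Delta\boldsymbol{x}$, the target becomes
\[ \int_{\Omega^{\kappa^{\boldsymbol{\lambda-1}}}}\bigg(\sum_{i=1}^m D_iG(U^*_1(\boldsymbol{x}),\ldots,U^*_m(\boldsymbol{x}))\,\bigg|\frac{\partial^{\boldsymbol{1}}U^*_i(\boldsymbol{x})}{\Delta\boldsymbol{x}^{\boldsymbol{1}}}\bigg|\bigg)\Delta\boldsymbol{x} \le G\big(U^*_1(\boldsymbol{a}),\ldots,U^*_m(\boldsymbol{a})\big). \]

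The main obstacle is precisely this last inequality, which is the backward form of Theorem \ref{THEO1} and cannot be quoted from it directly: the functions $U^*_i$ are decreasing in each variable and vanish at ${\boldsymbol\rho}^{\boldsymbol{\lambda}-\boldsymbol{1}}(\boldsymbol{b})$ rather than at $\boldsymbol{a}$, so I must first establish a backward counterpart of Lemma \ref{chainp}. The delicate issue is the direction of the chain-rule estimate: repeating the two-case analysis of Lemma \ref{chainp} for a composition $F(g_1,\ldots,g_m)$ whose inner functions are decreasing (so that $g^\sigma_i\le g_i$ when $x<\sigma(x)$), the mean value in Case~1 now lies in $(g^\sigma_i,g_i)$, and monotonicity of $D_iF$ forces an evaluation at the forward-jump values $g^\sigma_i$ --- the same forward-jump phenomenon visible in the classical time-scale Opial inequality \eqref{eq1.1}, where $f$ is replaced by $f+f^\sigma$. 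Reconciling this forward-jump evaluation with the unshifted argument $D_iG(U^*_i(\boldsymbol{x}))$ on the left, while keeping under control the left endpoint $\boldsymbol{a}$ that the delta-integral ``sees'', is the crux of the proof and the only step where the time-scale case genuinely departs from the continuous one; a reflection to a reversed time scale does not help, since it interchanges the delta and nabla operators and thus does not return the problem to the delta-framework of Theorem \ref{THEO1}. Once the backward chain-rule lemma is in hand, reducing the single outer variable $x_1$ by the fundamental theorem of calculus --- integrating from $a_1$ up to $\rho_1^{\lambda_1-1}(b_1)$, where the accumulated functions vanish --- closes the argument.
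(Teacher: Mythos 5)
Your route is the paper's own: the published proof of Theorem \ref{THEO4'} consists of the backward Taylor representation $f_i(\boldsymbol{x})=(-1)^n\int_{\bar{\Omega}_{\boldsymbol{x}}}H_{\boldsymbol{\lambda}}(\boldsymbol{x},\boldsymbol{t})\,\frac{\partial^{\boldsymbol{\lambda}}f_i(\boldsymbol{t})}{\Delta\boldsymbol{t}^{\boldsymbol{\lambda}}}\Delta\boldsymbol{t}$ followed by the instruction to repeat the proof of Theorem \ref{THEO4} with $\Omega_{\boldsymbol{x}}$ replaced by $\bar{\Omega}_{\boldsymbol{x}}$. You have correctly isolated the one step at which this repetition is not routine, namely the final appeal to \eqref{ptTHEO1}, which now has to be made for accumulation functions $U^*_i$ that are decreasing and vanish at the upper corner; the paper passes over this point in silence. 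That diagnosis is the valuable part of your proposal.

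The gap is that the ``backward chain-rule lemma'' on which you make everything rest is never proved; you describe the obstacle and then assert that once the lemma is in hand the argument closes. In fact the inequality you need, $\int\big(\sum_i D_iG(U^*_1(\boldsymbol{x}),\ldots,U^*_m(\boldsymbol{x}))\,|\partial^{\boldsymbol{1}}U^*_i(\boldsymbol{x})/\Delta\boldsymbol{x}^{\boldsymbol{1}}|\big)\Delta\boldsymbol{x}\le G(U^*_1(\boldsymbol{a}),\ldots,U^*_m(\boldsymbol{a}))$, cannot be obtained by the method of Lemma \ref{chainp} and is false in general. For decreasing $g_i$ the mean value $c_i$ lies in $(g_i^{\sigma},g_i)$ and the shifted arguments satisfy $g_j^{\sigma}\le g_j$, so monotonicity of $D_iF$ gives $A_i\le D_iF(g(x))$ and hence
\[ -[F(g(x))]^{\Delta}=\sum_{i=1}^m A_i\,|g_i^{\Delta}(x)|\le\sum_{i=1}^m D_iF(g(x))\,|g_i^{\Delta}(x)|, \]
so integration yields $F(g(\boldsymbol{a}))$ as a \emph{lower} bound for the quantity you must bound from above. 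A two-point time scale makes the failure concrete and shows that it infects the statement itself: take $\mathbb{T}=\{0,1\}$, $n=m=1$, $\boldsymbol{\lambda}=1$, $\alpha=\beta=1$, $\omega=\tau\equiv1$, $G(x)=|x|^{2}\sign(x)$, and $f(0)=c>0$, $f(1)=0$. Then $f^{\Delta}(0)=-c$ and the left side of \eqref{eq4'} equals $G'(c)\cdot c=2c^{2}$, while $V^*(0)=1$, $K^*_{\Omega^{\kappa^{\boldsymbol{\lambda-1}}}}=[G'(1)]^{1/2}=\sqrt{2}$, and the right side equals $\sqrt{2}\,[G(c^{2})]^{1/2}=\sqrt{2}\,c^{2}<2c^{2}$. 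So the step you flag cannot be completed in the form you (and the paper) propose: on time scales with right-scattered points some modification of the conclusion --- a shifted argument $|f_i^{\sigma}|$ as in \eqref{eq1.1}, or an enlarged constant --- is unavoidable.
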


\begin{proof}
For all $ i \in[1, m]_{\mathbb{N}},$ and $\boldsymbol{x} \in \Omega^{\kappa^{\boldsymbol{\lambda-1}}}$, we have
\[ 
f_i(\boldsymbol{x})=(-1)^n \int_{\bar{\Omega}_{\boldsymbol{x}}} H_{\boldsymbol{\lambda}}(\boldsymbol{x},\boldsymbol{t})\frac{\partial^{\boldsymbol{\lambda}} f_i(\boldsymbol{t})}{\Delta \boldsymbol{t}^{\boldsymbol{\lambda}}}\Delta \boldsymbol{t}.  \\
 \]
In the proof of Theorem \ref{THEO4}, replacing $\Omega_{\boldsymbol{x}}$ by $\bar{\Omega}_{\boldsymbol{x}}$ we get \eqref{eq4'}. \qed
\end{proof}
\begin{remark}
Since  \eqref{eqRe1}, note that when $n= \boldsymbol{\lambda}=\alpha = 1, \beta = p-1,$ for $1<p\leq 2$,  $ m=2, G(x_1, x_2) =|x_1x_2|,$
$\omega_1=\psi^{\sigma}$, $\tau_1=\phi [\psi^{\sigma}]^{p/2}$, where $\phi, \psi \in \mathcal{W}([a,b]_{\mathbb T})$ and $\psi$ is decreasing  on $[a,b]_{\mathbb T}$, we see that  inequality \eqref{eq4'} improves \cite[Theorem 3.2]{YZ}.
\end{remark}

By applying Theorems \ref{THEO4} and \ref{THEO4'} on $\Omega_{\textbf{c}}$ and $\bar{\Omega}_{\textbf{c}}$, respectively, with $\textbf{c} \in \Omega^{\kappa^{\boldsymbol{\lambda-1}}}$ is such that $\Omega^{\kappa^{\boldsymbol{\lambda-1}}} =\Omega_{\textbf{c}}\cup \bar{\Omega}_{\textbf{c}}$, and summing the resulting inequalities, we have the following result.
\begin{theorem}\label{THEO4''}
Let $\omega_i, \tau_i \in \mathcal{W}(\Omega^{\kappa^{\boldsymbol{\lambda-1}}})$, and    $f_i : \Omega \to (-R, R)$ be such that
 \[\int_{\Omega^{\kappa^{\boldsymbol{\lambda-1}}}}\bigg|\frac{\partial^{\boldsymbol{\lambda}} f_i(\textbf{x}) }{\Delta \textbf{x}^{\boldsymbol{\lambda}}}\bigg|^{\alpha+\beta}\tau_i(\textbf{x})\Delta \textbf{x}<R \quad \text{for} \quad  i \in [1,m]_{\mathbb{N}}. \]

  If  $f_i \in  \mathcal{L}^{\alpha+\beta}_{\textbf{a}}(\Omega_{\textbf{c}}, \tau_i, \boldsymbol{\lambda}) \cap \mathcal{L}^{\alpha+\beta}_{{\boldsymbol \rho}^{\boldsymbol{\lambda-1}}(\textbf{b})}({\bar{\Omega}_{\textbf{c}}}, \tau_i, \boldsymbol{\lambda})  $ for all  $ i \in [1, m]_{\mathbb{N}}$,
then we have
\begin{equation}\label{eq4''}
\begin{split}
&\int_{\Omega^{\kappa^{\boldsymbol{\lambda-1}}}}\bigg(\sum^m_{i=1}{[D_iG(|f_1(\textbf{x})|, ..., |f_m(\textbf{x})|)]^\alpha\bigg|\frac{\partial^{\boldsymbol{\lambda}}f_i(\textbf{x}) }{\Delta \textbf{x}^{\boldsymbol\lambda}}\bigg|^\alpha\omega_i(\textbf{x})}\bigg) \Delta \textbf{x}\\
& \leq  K_{\Omega_{\textbf{c}}}\bigg[G\bigg(\int_{{\Omega}_{\textbf{c}}}\bigg|\frac{\partial^{\boldsymbol{\lambda}} f_1(\textbf{x}) }{\Delta \textbf{x}^{\boldsymbol{\lambda}}}\bigg|^{\alpha+\beta}\tau_1(\textbf{x})\Delta \textbf{x}, ..., \int_{{\Omega}_{\textbf{c}}}\bigg|\frac{\partial^{\boldsymbol{\lambda}}f_m(\textbf{x})  }{\Delta \textbf{x}^{\boldsymbol{\lambda}}}\bigg|^{\alpha+\beta}\tau_m(\textbf{x})\Delta \textbf{x}  \bigg)\bigg]^{\frac{\alpha}{\alpha+\beta}} \\
&\quad  +  K^*_{{\bar{\Omega}_{\textbf{c}}}} \bigg[G\bigg(\int_{\bar{\Omega}_{\textbf{c}}}\bigg|\frac{\partial^{\boldsymbol{\lambda}} f_1(\textbf{x}) }{\Delta \textbf{x}^{\boldsymbol{\lambda}}}\bigg|^{\alpha+\beta}\tau_1(\textbf{x})\Delta \textbf{x}, ..., \int_{\bar{\Omega}_{\textbf{c}}}\bigg|\frac{\partial^{\boldsymbol{\lambda}}f_m(\textbf{x})  }{\Delta \textbf{x}^{\boldsymbol{\lambda}}}\bigg|^{\alpha+\beta}\tau_m(\textbf{x})\Delta \textbf{x}  \bigg)\bigg]^{\frac{\alpha}{\alpha+\beta}}\\
\end{split}
\end{equation}
for all $\textbf{c} \in \Omega^{\kappa^{\boldsymbol{\lambda-1}}}$ is such that $\Omega^{\kappa^{\boldsymbol{\lambda-1}}} =\Omega_{\textbf{c}}\cup \bar{\Omega}_{\textbf{c}}$, where $K_{\Omega_{\textbf{c}}},  K^*_{{\bar{\Omega}_{\textbf{c}}}}$ are  defined as in \eqref{k2}  and  \eqref{k2'}, respectively.
\end{theorem}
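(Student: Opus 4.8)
The plan is to reduce Theorem \ref{THEO4''} to the two inequalities already established in Theorems \ref{THEO4} and \ref{THEO4'} by splitting the domain of integration at the point $\boldsymbol{c}$. First I would record that the integrand on the left-hand side of \eqref{eq4''}, namely $\sum_{i=1}^m [D_iG(|f_1(\boldsymbol{x})|, \ldots, |f_m(\boldsymbol{x})|)]^\alpha |\partial^{\boldsymbol{\lambda}} f_i(\boldsymbol{x})/\Delta \boldsymbol{x}^{\boldsymbol{\lambda}}|^\alpha \omega_i(\boldsymbol{x})$, is non-negative throughout $\Omega^{\kappa^{\boldsymbol{\lambda-1}}}$: indeed $D_iG \ge 0$ since $G \in \mathcal{G}^{1,m}_R \subseteq \mathcal{H}^m_R$ by the remark preceding Theorem \ref{THEO4}, while $\omega_i \in \mathcal{W}(\Omega^{\kappa^{\boldsymbol{\lambda-1}}})$ is positive-valued. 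Using the hypothesis $\Omega^{\kappa^{\boldsymbol{\lambda-1}}} = \Omega_{\boldsymbol{c}} \cup \bar{\Omega}_{\boldsymbol{c}}$, where the two sub-boxes meet only along their common boundary face (of delta-measure zero), additivity of the delta integral then gives that the integral over $\Omega^{\kappa^{\boldsymbol{\lambda-1}}}$ equals the sum of the integrals of the same non-negative integrand over $\Omega_{\boldsymbol{c}}$ and over $\bar{\Omega}_{\boldsymbol{c}}$.

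Next I would apply Theorem \ref{THEO4} verbatim, with $\Omega$ replaced by the lower box $\Omega_{\boldsymbol{c}}$. This is legitimate: the assumption $f_i \in \mathcal{L}^{\alpha+\beta}_{\boldsymbol{a}}(\Omega_{\boldsymbol{c}}, \tau_i, \boldsymbol{\lambda})$ supplies exactly the vanishing boundary data at $\boldsymbol{a}$ required in that theorem, and the integrability bound $\int_{\Omega_{\boldsymbol{c}}} |\partial^{\boldsymbol{\lambda}} f_i/\Delta \boldsymbol{x}^{\boldsymbol{\lambda}}|^{\alpha+\beta}\tau_i \, \Delta \boldsymbol{x} < R$ follows from the global bound on $\Omega^{\kappa^{\boldsymbol{\lambda-1}}}$ by monotonicity of the integral over the smaller set $\Omega_{\boldsymbol{c}}$. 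This yields precisely the first summand on the right-hand side of \eqref{eq4''}, with $K_{\Omega_{\boldsymbol{c}}}$ defined as in \eqref{k2}. In parallel I would apply Theorem \ref{THEO4'} with $\Omega^{\kappa^{\boldsymbol{\lambda-1}}}$ replaced by the upper box $\bar{\Omega}_{\boldsymbol{c}}$, whose representation of $f_i$ rests on the backward Taylor expansion from ${\boldsymbol\rho}^{\boldsymbol{\lambda-1}}(\boldsymbol{b})$; here the assumption $f_i \in \mathcal{L}^{\alpha+\beta}_{{\boldsymbol\rho}^{\boldsymbol{\lambda-1}}(\boldsymbol{b})}(\bar{\Omega}_{\boldsymbol{c}}, \tau_i, \boldsymbol{\lambda})$ furnishes the vanishing data at that endpoint, and the same monotonicity argument transfers the integrability bound. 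This produces the second summand, with $K^*_{\bar{\Omega}_{\boldsymbol{c}}}$ from \eqref{k2'}.

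Finally, adding the two resulting inequalities and invoking the domain splitting of the first step gives \eqref{eq4''}. The argument is essentially bookkeeping, so I do not expect a genuine analytic difficulty; the only points needing care are that the auxiliary quantities $V_i, V_i^*$ and the constants $K_{\Omega_{\boldsymbol{c}}}, K^*_{\bar{\Omega}_{\boldsymbol{c}}}$ formed on the sub-boxes are finite (inherited from the finiteness assumptions, since shrinking the domain only shrinks these non-negative integrals), and that the covering $\Omega^{\kappa^{\boldsymbol{\lambda-1}}} = \Omega_{\boldsymbol{c}} \cup \bar{\Omega}_{\boldsymbol{c}}$ is compatible with delta integration in that the shared boundary contributes no extra mass. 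I expect this compatibility of the splitting with the $n$-fold delta integral, rather than any new estimate, to be the main thing to verify.
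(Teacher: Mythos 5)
Your proposal is correct and coincides with the paper's own argument: the paper likewise obtains Theorem \ref{THEO4''} by applying Theorem \ref{THEO4} on $\Omega_{\boldsymbol{c}}$ and Theorem \ref{THEO4'} on $\bar{\Omega}_{\boldsymbol{c}}$ and summing, using exactly the decomposition $\Omega^{\kappa^{\boldsymbol{\lambda-1}}}=\Omega_{\boldsymbol{c}}\cup\bar{\Omega}_{\boldsymbol{c}}$. The only cosmetic remark is that additivity of the delta integral across the splitting point is exact by the interval-additivity of $\int_a^c+\int_c^b$ on each time scale, so no ``measure-zero overlap'' argument is needed.
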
 

\begin{remark}
 If $n= \boldsymbol{\lambda}=\alpha = 1, \beta = p-1,$ for $1<p\leq 2$,  $ m=2, G(x_1, x_2) =|x_1x_2|,$
$\omega_1=\psi^{\sigma}$, $\tau_1=\phi [\psi^{\sigma}]^{p/2}$, where $\phi, \psi \in \mathcal{W}([a,b]_{\mathbb T})$ and $\psi$ is decreasing  on $[a,b]_{\mathbb T}$, we see that  inequality \eqref{eq4''} improves \cite[Theorem 3.2]{YZ}.
\end{remark}

Let us mention some important consequences of Theorems \ref{THEO4}, \ref{THEO4'}, and \ref{THEO4''}. First, let $m =1$; then we obtain the following corollary.
\begin{corollary}\label{coG}
Let $G\in \mathcal{G}^{1, 1}_R$, $\omega, \tau \in \mathcal{W}(\Omega)$, and 
  $f : \Omega \to (-R, R)$ be such that 
\[\int_{\Omega}\bigg|\frac{\partial^{\boldsymbol{\lambda}} f(\textbf{x}) }{\Delta \textbf{x}^{\boldsymbol{\lambda}}}\bigg|^{\alpha+\beta}\tau(\textbf{x})\Delta \textbf{x}<R.\]

If  $f \in  \mathcal{L}^{\alpha+\beta}_{\textbf{a}}(\Omega, \tau, \boldsymbol{\lambda})$ and
\[N_{\Omega}:=
 \bigg[\int_{\Omega}[G'(  \vartheta(\textbf{x}))]^{{\frac{\alpha(\alpha+\beta-1)}{\beta}}}\omega^{\frac{\alpha+\beta}{\beta}}(\textbf{x})\tau^{-\frac{\alpha}{\beta}}(\textbf{x})  \Delta \textbf{x}\bigg]^{\frac{\beta}{\alpha+\beta}}<\infty,
\]
where
\[\vartheta(\textbf{x}): =\int_{\Omega_\textbf{x}} \bigg(H_{\boldsymbol{\lambda}}(\textbf{x},\textbf{t})\bigg)^{\frac{\alpha+\beta}{\alpha+\beta-1}}(\tau(\textbf{t}))^{\frac{1}{1-\alpha-\beta}}\Delta \textbf{t}, \quad \textbf{x} \in \Omega,\]
then
 \begin{equation}\label{ptG1}
\int_{\Omega}[G'(|f(\textbf{x})|)]^{\alpha}\bigg|\frac{\partial^{\boldsymbol{\lambda}}f(\textbf{x}) }{\Delta \textbf{x}^{\boldsymbol{\lambda}}}\bigg|^\alpha\omega(\textbf{x}) \Delta \textbf{x}\leq N_{\Omega}\bigg[G\bigg(\int_{\Omega}\bigg|\frac{\partial^{\boldsymbol{\lambda}} f(\textbf{x}) }{\Delta \textbf{x}^{\boldsymbol{\lambda}}}\bigg|^{\alpha+\beta}\tau(\textbf{x})\Delta \textbf{x}  \bigg)\bigg]^{\frac{\alpha}{\alpha+\beta}}. \\
\end{equation}

Similarly, if  $f \in \mathcal{L}^{\alpha+\beta}_{{\boldsymbol\rho}^{\boldsymbol{\lambda -1} }(\textbf{b})}(\Omega^{\kappa^{\boldsymbol{\lambda-1}}}, \tau, \boldsymbol{\lambda})$ and

\[N^*_{\Omega^{\kappa^{\boldsymbol{\lambda-1}}}}:=
 \bigg[\int_{\Omega^{\kappa^{\boldsymbol{\lambda-1}}}}[G'(  \vartheta^*(\textbf{x}))]^{  {\frac{\alpha(\alpha+\beta-1)}{\beta}} }\omega^{\frac{\alpha+\beta}{\beta}}(\textbf{x})\tau^{-\frac{\alpha}{\beta}}(\textbf{x})  \Delta \textbf{x}\bigg]^{\frac{\beta}{\alpha+\beta}}<\infty,
\]
where
\[\vartheta^*(\textbf{x}): =\int_{\bar{\Omega}_\textbf{x}} |H_{\boldsymbol{\lambda}}(\textbf{x},\textbf{t})|^{\frac{\alpha+\beta}{\alpha+\beta-1}}(\tau(\textbf{t}))^{\frac{1}{1-\alpha-\beta}}\Delta \textbf{t}, \quad \textbf{x} \in \Omega,\]
then, 
\begin{equation}\label{ptG2}
\begin{split}
&\int_{\Omega^{\kappa^{\boldsymbol{\lambda-1}}}}[G'(|f(\textbf{x})|)]^\alpha\bigg|\frac{\partial^{\boldsymbol{\lambda}}f(\textbf{x}) }{\Delta \textbf{x}^{\boldsymbol{\lambda}}}\bigg|^\alpha\omega(\textbf{x}) \Delta \textbf{x}\\
&\leq N^*_{\Omega^{\kappa^{\boldsymbol{\lambda-1}}}}\bigg[G\bigg(\int_{\Omega^{\kappa^{\boldsymbol{\lambda-1}}}}\bigg|\frac{\partial^{\boldsymbol{\lambda}} f(\textbf{x}) }{\Delta \textbf{x}^{\boldsymbol{\lambda}}}\bigg|^{\alpha+\beta}\tau(\textbf{x})\Delta \textbf{x}  \bigg)\bigg]^{\frac{\alpha}{\alpha+\beta}}. 
\end{split}
\end{equation}

If  $f \in \mathcal{L}^{\alpha+\beta}_{\textbf{a}}(\Omega_{\textbf{c}}, \tau, \boldsymbol{\lambda})  \cap \mathcal{L}^{\alpha+\beta}_{{{\boldsymbol\rho}^{\boldsymbol{\lambda -1} }(\textbf{b})}}(\bar{\Omega}_{\textbf{c}}, \tau, \boldsymbol{\lambda})$  and  $N_{\Omega_{\textbf{c}}}$,  $N^*_{\bar{\Omega}_{\textbf{c}}}$ are finite for all $\textbf{c} \in \Omega^{\kappa^{\boldsymbol{\lambda-1}}}$ is such that $\Omega^{\kappa^{\boldsymbol{\lambda-1}}} =\Omega_{\textbf{c}}\cup \bar{\Omega}_{\textbf{c}}$, then
\begin{equation}\label{ptG3} 
\begin{split}
&\int_{\Omega^{\kappa^{\boldsymbol{\lambda-1}}}}[G'(|f(\textbf{x})|)]^\alpha\bigg|\frac{\partial^{\boldsymbol{\lambda}}f(\textbf{x}) }{\Delta \textbf{x}^{\boldsymbol{\lambda}}}\bigg|^\alpha\omega(\textbf{x}) \Delta \textbf{x} \\
& \leq   N_{\Omega_{\textbf{c}}}\bigg[G\bigg(\int_{\Omega_{\textbf{c}} }\bigg|\frac{\partial^{\boldsymbol{\lambda}} f(\textbf{x}) }{\Delta \textbf{x}^{\boldsymbol{\lambda}}}\bigg|^{\alpha+\beta}\tau(\textbf{x})\Delta \textbf{x}  \bigg)\bigg]^{\frac{\alpha}{\alpha+\beta}}\\
&\quad + N^*_{\bar{\Omega}_{\textbf{c}}}\bigg[G\bigg(\int_{\bar{\Omega}_{\textbf{c}}}\bigg|\frac{\partial^{\boldsymbol{\lambda}} f(\textbf{x}) }{\Delta \textbf{x}^{\boldsymbol{\lambda}}}\bigg|^{\alpha+\beta}\tau(\textbf{x})\Delta \textbf{x}  \bigg)\bigg]^{\frac{\alpha}{\alpha+\beta}}. \\
\end{split}
\end{equation}
\end{corollary}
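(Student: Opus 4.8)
The plan is to obtain each of the three inequalities \eqref{ptG1}, \eqref{ptG2}, and \eqref{ptG3} as the special case $m = 1$ of Theorems \ref{THEO4}, \ref{THEO4'}, and \ref{THEO4''}, respectively. Since the hypotheses of the corollary are exactly the single-function versions of the hypotheses of those theorems, the bulk of the work is to check that all of the multi-index and multi-function notation collapses correctly when $m = 1$. The standing assumption $G \in \mathcal{G}^{1,m}_R$ needed to invoke the three theorems is met at once, because here $G \in \mathcal{G}^{1,1}_R$ and that hypothesis is stated uniformly in $m$.

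First I would fix $m = 1$ throughout, so that there is a single function $f$ and a single weight pair $(\omega, \tau)$. For $G \in \mathcal{G}^{1,1}_R$ the partial derivative $D_1 G$ is just the ordinary derivative $G'$, so every occurrence of $D_i G(|f_1(\mathbf{x})|, \ldots, |f_m(\mathbf{x})|)$ becomes $G'(|f(\mathbf{x})|)$, and the sums $\sum_{i=1}^m$ appearing in \eqref{ptTHEO4}, \eqref{eq4'}, and \eqref{eq4''} each reduce to their single summand. Matching the auxiliary functions, I would observe that $V_1(\mathbf{x})$ of Theorem \ref{THEO4} coincides with $\vartheta(\mathbf{x})$ and that $V_1^*(\mathbf{x})$ of Theorem \ref{THEO4'} coincides with $\vartheta^*(\mathbf{x})$; consequently the constants $K_{\Omega}$ and $K^*_{\Omega^{\kappa^{\boldsymbol{\lambda-1}}}}$ specialize precisely to $N_{\Omega}$ and $N^*_{\Omega^{\kappa^{\boldsymbol{\lambda-1}}}}$, with the finiteness conditions \eqref{k2} and \eqref{k2'} passing to $N_{\Omega} < \infty$ and $N^*_{\Omega^{\kappa^{\boldsymbol{\lambda-1}}}} < \infty$.

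With these identifications in hand, inequality \eqref{ptTHEO4} becomes \eqref{ptG1}, inequality \eqref{eq4'} becomes \eqref{ptG2}, and inequality \eqref{eq4''} becomes \eqref{ptG3}, the last by applying the reduction on $\Omega_{\mathbf{c}}$ and $\bar{\Omega}_{\mathbf{c}}$ separately and summing, exactly as in Theorem \ref{THEO4''}. There is no substantive mathematical obstacle here, since the corollary is a direct transcription of the three theorems; the only point deserving care is the bookkeeping that confirms the restricted constants $N_{\Omega_{\mathbf{c}}}$ and $N^*_{\bar{\Omega}_{\mathbf{c}}}$ are the $m=1$ forms of $K_{\Omega_{\mathbf{c}}}$ and $K^*_{\bar{\Omega}_{\mathbf{c}}}$ for every admissible $\mathbf{c} \in \Omega^{\kappa^{\boldsymbol{\lambda-1}}}$ with $\Omega^{\kappa^{\boldsymbol{\lambda-1}}} = \Omega_{\mathbf{c}} \cup \bar{\Omega}_{\mathbf{c}}$.
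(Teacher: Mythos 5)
Your proposal is correct and follows exactly the route the paper takes: the paper introduces this corollary with the single remark ``let $m=1$'' in Theorems \ref{THEO4}, \ref{THEO4'}, and \ref{THEO4''}, and your identifications $D_1G = G'$, $V_1 = \vartheta$, $V_1^* = \vartheta^*$, $K_{\Omega} = N_{\Omega}$, $K^*_{\Omega^{\kappa^{\boldsymbol{\lambda-1}}}} = N^*_{\Omega^{\kappa^{\boldsymbol{\lambda-1}}}}$ are precisely the bookkeeping that justifies it.
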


\begin{remark}
Inequality \eqref{ptG1} is the same as inequality given in \cite[Theorem 2.9 ]{SAK1}, if we take
$n=1$  and $G(x) = |x|^{\gamma}$ for $ \gamma>1.$  If $\lambda_1 = 1$, then 
inequalities \eqref{ptG1} and \eqref{ptG2}  reduce to inequalities given  in \cite[Theorems 3.1 and 3.2]{SORA}, respectively.
\end{remark}

We examine Corollary \ref{coG} further in the case when $G(x) = |x|^{\frac{\alpha+\beta}{\alpha}}$. We obtain the following corollary.
\begin{corollary}\label{Co3.16}
Let   $\omega, \tau \in \mathcal{W}(\Omega),$ and
$f : \Omega \to (-R, R)$ be such that 
\[\int_{\Omega}\bigg|\frac{\partial^{\boldsymbol{\lambda}} f(\textbf{x}) }{\Delta \textbf{x}^{\boldsymbol{\lambda}}}\bigg|^{\alpha+\beta}\tau(\textbf{x})\Delta \textbf{x}<R.\]

If $f \in \mathcal{L}^{\alpha+\beta}_{\textbf{a}}(\Omega, \tau, \boldsymbol{\lambda})$  and
\[ 
L_{\Omega}:= \bigg[  \int_{\Omega}  \bigg( \int_{\Omega_{\textbf{x}}} H^{{\frac{\alpha+\beta}{\alpha+\beta-1}}}_{\boldsymbol{\lambda}}(\textbf{x},\textbf{t}) \tau^{\frac{1}{1-\alpha-\beta}}(\textbf{t})  \Delta \textbf{t}  \bigg)^{\alpha+\beta-1}  \omega^{\frac{\alpha+\beta}{\beta}}(\textbf{x})\tau^{-\frac{\alpha}{\beta}}(\textbf{x})    \Delta \textbf{x}  \bigg]^{\frac{\beta}{\alpha+\beta}} < \infty,  
\]
then 
\begin{equation}\label{ptmin1}
 \int_{\Omega}|f(\textbf{x})|^\beta\bigg|\frac{\partial^{\boldsymbol{\lambda}}f(\textbf{x}) }{\Delta \textbf{x}^{\boldsymbol{\lambda}}}\bigg|^\alpha\omega(\textbf{x})   \Delta \textbf{x} \leq   \bigg(\frac{\alpha}{\alpha+\beta}\bigg)^{\frac{\alpha}{\alpha+\beta}}L_{\Omega} \int_{\Omega} \bigg|\frac{\partial^{\boldsymbol{\lambda}}f(\textbf{x}) }{\Delta \textbf{x}^{\boldsymbol{\lambda}}}\bigg|^{\alpha+\beta}\tau(\textbf{x})   \Delta \textbf{x}.
\end{equation}
 
Similarly, if $f \in \mathcal{L}^{\alpha+\beta}_{{\boldsymbol\rho}^{\boldsymbol{\lambda -1} }(\textbf{b})}(\Omega^{\kappa^{\boldsymbol{\lambda-1}}}, \tau, \boldsymbol{\lambda})$ and 
\[ 
L^*_{\Omega^{\kappa^{\boldsymbol{\lambda-1}}}}:= \bigg[  \int_{\Omega^{\kappa^{\boldsymbol{\lambda-1}}} }  \bigg( \int_{\bar{\Omega}_{\textbf{x}}} |H_{\boldsymbol{\lambda}}(\textbf{x},\textbf{t})|^{{\frac{\alpha+\beta}{\alpha+\beta-1}}} \tau^{\frac{1}{1-\alpha-\beta}}(\textbf{t})  \Delta \textbf{t}  \bigg)^{\alpha+\beta-1}  \omega^{\frac{\alpha+\beta}{\beta}}(\textbf{x})\tau^{-\frac{\alpha}{\beta}}(\textbf{x})    \Delta \textbf{x}  \bigg]^{\frac{\beta}{\alpha+\beta}}
\]
is finite, then
\begin{equation}\label{ptmin2}
\begin{split}
 &\int_{\Omega^{\kappa^{\boldsymbol{\lambda-1}}}}|f(\textbf{x})|^\beta\bigg|\frac{\partial^{\boldsymbol{\lambda}}f(\textbf{x}) }{\Delta \textbf{x}^{\boldsymbol{\lambda}}}\bigg|^\alpha\omega(\textbf{x})   \Delta \textbf{x}\\
& \leq   \bigg(\frac{\alpha}{\alpha+\beta}\bigg)^{\frac{\alpha}{\alpha+\beta}}L^*_{\Omega^{\kappa^{\boldsymbol{\lambda-1}}}} \int_{\Omega^{\kappa^{\boldsymbol{\lambda-1}}}} \bigg|\frac{\partial^{\boldsymbol{\lambda}}f(\textbf{x}) }{\Delta \textbf{x}^{\boldsymbol{\lambda}}}\bigg|^{\alpha+\beta}\tau(\textbf{x})   \Delta \textbf{x}.
\end{split}
\end{equation}

If  $f \in \mathcal{L}^{\alpha+\beta}_{\textbf{a}}(\Omega_{\textbf{c}}, \tau, \boldsymbol{\lambda}) \cap \mathcal{L}^{\alpha+\beta}_{{\boldsymbol\rho}^{\boldsymbol{\lambda -1} }(\textbf{b})}({\bar{\Omega}_{\textbf{c}}}, \tau, \boldsymbol{\lambda})$, $L_{\Omega_{\textbf{c}}}$ and $L^*_{{\bar{\Omega}_{\textbf{c}}}} $ are finite for $\textbf{c} \in \Omega^{\kappa^{\boldsymbol{\lambda-1}}}$ is such that $\Omega^{\kappa^{\boldsymbol{\lambda-1}}} = \Omega_{\textbf{c}}\cup {\bar{\Omega}_{\textbf{c}}}$, then 
\begin{equation}\label{ptmin}
\begin{split}
& \int_{\Omega^{\kappa^{\boldsymbol{\lambda-1}}}}|f(\textbf{x})|^\beta\bigg|\frac{\partial^{\boldsymbol{\lambda}}f(\textbf{x}) }{\Delta \textbf{x}^{\boldsymbol{\lambda}}}\bigg|^\alpha\omega(\textbf{x})   \Delta \textbf{x}\\
& \leq   \bigg(\frac{\alpha}{\alpha+\beta}\bigg)^{\frac{\alpha}{\alpha+\beta}} \bigg[L_{\Omega_{\textbf{c}}} \int_{   \Omega_{\textbf{c}}   } \bigg|\frac{\partial^{\boldsymbol{\lambda}}f(\textbf{x}) }{\Delta \textbf{x}^{\boldsymbol{\lambda}}}\bigg|^{\alpha+\beta}\tau(\textbf{x})   \Delta \textbf{x}\\
&\quad +L^*_{{\bar{\Omega}_{\textbf{c}}}}  \int_{{\bar{\Omega}_{\textbf{c}}}     } \bigg|\frac{\partial^{\boldsymbol{\lambda}}f(\textbf{x}) }{\Delta \textbf{x}^{\boldsymbol{\lambda}}}\bigg|^{\alpha+\beta}\tau(\textbf{x})   \Delta \textbf{x}\bigg].\\
\end{split}
\end{equation}
 Furthermore, if we choose   $\textbf{c}\in \Omega^{\kappa^{\boldsymbol{\lambda-1}}}$ is such that $ |L_{\Omega_{\textbf{c}}}-L^*_{{\bar{\Omega}_{\textbf{c}}}}| \to \min$, then  \eqref{ptmin} reduces to 
\begin{equation}\label{ptMIN}
\begin{split}
& \int_{\Omega^{\kappa^{\boldsymbol{\lambda-1}}}}|f(\textbf{x})|^\beta\bigg|\frac{\partial^{\boldsymbol{\lambda}}f(\textbf{x}) }{\Delta \textbf{x}^{\boldsymbol{\lambda}}}\bigg|^\alpha\omega(\textbf{x})   \Delta \textbf{x}\\
& \leq   \bigg(\frac{\alpha}{\alpha+\beta}\bigg)^{\frac{\alpha}{\alpha+\beta}}\max \{L_{\Omega_{\textbf{c}}},  L^*_{{\bar{\Omega}_{\textbf{c}}}}\}\int_{  \Omega^{\kappa^{\boldsymbol{\lambda-1}}} } \bigg|\frac{\partial^{\boldsymbol{\lambda}}f(\textbf{x}) }{\Delta \textbf{x}^{\boldsymbol{\lambda}}}\bigg|^{\alpha+\beta}\tau(\textbf{x})   \Delta \textbf{x}.
\end{split}
\end{equation}

\end{corollary}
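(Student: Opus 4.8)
The plan is to derive Corollary \ref{Co3.16} as the specialization of Corollary \ref{coG} to the single power function $G(x)=|x|^{(\alpha+\beta)/\alpha}$, so that essentially no new analysis is required beyond a clean bookkeeping of constants. First I would verify that this $G$ lies in $\mathcal{G}^{1,1}_R$. Since $\beta>0$ we have $(\alpha+\beta)/\alpha>1$, hence $G\in C^1((-R,R))$ and $G(0)=0$; moreover $G'(x)=\frac{\alpha+\beta}{\alpha}|x|^{\beta/\alpha}\sign(x)$ is a positive power of $|x|$, so for $m=1$ condition $(iii)$ holds (with equality when $x=y^{1/p}z^{1/q}$) by multiplicativity of powers. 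This is exactly the situation recorded in the Example, so Corollary \ref{coG} is applicable.

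Next I would evaluate both sides of \eqref{ptG1} for this $G$. On the left, $[G'(|f(\boldsymbol{x})|)]^{\alpha}=\left(\frac{\alpha+\beta}{\alpha}\right)^{\alpha}|f(\boldsymbol{x})|^{\beta}$, which produces the factor $|f|^{\beta}$ in \eqref{ptmin1}. On the right, writing $I:=\int_{\Omega}\left|\frac{\partial^{\boldsymbol{\lambda}}f(\boldsymbol{x})}{\Delta \boldsymbol{x}^{\boldsymbol{\lambda}}}\right|^{\alpha+\beta}\tau(\boldsymbol{x})\Delta \boldsymbol{x}$, we have $G(I)=I^{(\alpha+\beta)/\alpha}$, so $[G(I)]^{\alpha/(\alpha+\beta)}=I$. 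The only genuine computation is to relate $N_{\Omega}$ to $L_{\Omega}$: since $G'(\vartheta)=\frac{\alpha+\beta}{\alpha}\vartheta^{\beta/\alpha}$,
\[
[G'(\vartheta(\boldsymbol{x}))]^{\frac{\alpha(\alpha+\beta-1)}{\beta}}=\left(\frac{\alpha+\beta}{\alpha}\right)^{\frac{\alpha(\alpha+\beta-1)}{\beta}}\big(\vartheta(\boldsymbol{x})\big)^{\alpha+\beta-1},
\]
and $\big(\vartheta(\boldsymbol{x})\big)^{\alpha+\beta-1}$ is precisely the inner factor $\left(\int_{\Omega_{\boldsymbol{x}}}H^{\frac{\alpha+\beta}{\alpha+\beta-1}}_{\boldsymbol{\lambda}}\tau^{\frac{1}{1-\alpha-\beta}}\Delta\boldsymbol{t}\right)^{\alpha+\beta-1}$ appearing in $L_{\Omega}$. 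Pulling the constant through the outer $\beta/(\alpha+\beta)$ power yields $N_{\Omega}=\left(\frac{\alpha+\beta}{\alpha}\right)^{\frac{\alpha(\alpha+\beta-1)}{\alpha+\beta}}L_{\Omega}$.

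Substituting these into \eqref{ptG1} and cancelling the common factor $\left(\frac{\alpha+\beta}{\alpha}\right)^{\alpha}$ leaves the net constant $\left(\frac{\alpha+\beta}{\alpha}\right)^{\frac{\alpha(\alpha+\beta-1)}{\alpha+\beta}-\alpha}$. Since $\frac{\alpha(\alpha+\beta-1)}{\alpha+\beta}-\alpha=-\frac{\alpha}{\alpha+\beta}$, this equals $\left(\frac{\alpha}{\alpha+\beta}\right)^{\alpha/(\alpha+\beta)}$, which is exactly \eqref{ptmin1}. Inequalities \eqref{ptmin2} and \eqref{ptmin} follow by the identical computation applied to \eqref{ptG2} and \eqref{ptG3}, because $G$, $G'$, and the integrals $\vartheta^{*}$ (and their restrictions to $\Omega_{\boldsymbol{c}}$, $\bar{\Omega}_{\boldsymbol{c}}$) transform the same way, converting $N^{*}$ into $L^{*}$ up to the same constant. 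Finally, \eqref{ptMIN} is obtained from \eqref{ptmin} by the elementary estimate $aX+bY\le \max\{a,b\}(X+Y)$ for $X,Y\ge 0$, taken with $a=L_{\Omega_{\boldsymbol{c}}}$, $b=L^{*}_{\bar{\Omega}_{\boldsymbol{c}}}$, $X=\int_{\Omega_{\boldsymbol{c}}}|\partial^{\boldsymbol{\lambda}}f/\Delta\boldsymbol{x}^{\boldsymbol{\lambda}}|^{\alpha+\beta}\tau\,\Delta\boldsymbol{x}$, $Y=\int_{\bar{\Omega}_{\boldsymbol{c}}}|\partial^{\boldsymbol{\lambda}}f/\Delta\boldsymbol{x}^{\boldsymbol{\lambda}}|^{\alpha+\beta}\tau\,\Delta\boldsymbol{x}$, together with the additivity $X+Y=\int_{\Omega^{\kappa^{\boldsymbol{\lambda-1}}}}|\partial^{\boldsymbol{\lambda}}f/\Delta\boldsymbol{x}^{\boldsymbol{\lambda}}|^{\alpha+\beta}\tau\,\Delta\boldsymbol{x}$ coming from $\Omega^{\kappa^{\boldsymbol{\lambda-1}}}=\Omega_{\boldsymbol{c}}\cup\bar{\Omega}_{\boldsymbol{c}}$; choosing $\boldsymbol{c}$ so that $|L_{\Omega_{\boldsymbol{c}}}-L^{*}_{\bar{\Omega}_{\boldsymbol{c}}}|$ is minimal makes $\max\{L_{\Omega_{\boldsymbol{c}}},L^{*}_{\bar{\Omega}_{\boldsymbol{c}}}\}$ as small as possible.

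There is no real analytic obstacle, as the statement is a direct specialization of Corollary \ref{coG}; the step demanding the most care is the exponent bookkeeping, namely tracking the constant $\frac{\alpha+\beta}{\alpha}$ first through the $\alpha(\alpha+\beta-1)/\beta$ power inside $N_{\Omega}$ and then through the outer $\beta/(\alpha+\beta)$ power, and verifying that the residual exponent collapses exactly to $-\alpha/(\alpha+\beta)$. I would also confirm at the outset that $\beta>0$ forces $(\alpha+\beta)/\alpha>1$, which is what secures both $G\in C^{1}$ and the validity of condition $(iii)$ needed to invoke the corollary.
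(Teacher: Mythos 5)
Your proposal is correct and is exactly the route the paper intends: the paper states Corollary \ref{Co3.16} as the specialization of Corollary \ref{coG} to $G(x)=|x|^{(\alpha+\beta)/\alpha}$ without writing out the computation, and your exponent bookkeeping (in particular $N_{\Omega}=(\tfrac{\alpha+\beta}{\alpha})^{\alpha(\alpha+\beta-1)/(\alpha+\beta)}L_{\Omega}$ and the residual exponent collapsing to $-\alpha/(\alpha+\beta)$) checks out, as does the $\max$-estimate yielding \eqref{ptMIN}. No gaps.
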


\begin{remark}
The results in Corollary \ref{Co3.16} when  $n=1,  \boldsymbol{\lambda} = 1, \Omega = [a, b]_{\mathbb{T}},  f(a) = 0$ or/and $f(b) = 0$ yield many known results. Namely, if we set $a =0, \alpha = \beta = 1, \omega= \tau \equiv 1$, then inequality \eqref{ptmin} becomes
\begin{equation}\label{time1}
\begin{split}
&\int_0^b |f(x)| | f^\Delta(x)| \Delta x\\
 &\leq \frac{\sqrt{2}}{2} \bigg [\bigg(\int_0^c x \Delta x \bigg)^{1/2}  \int_0^c | f^\Delta(x)|^2 \Delta x + \bigg(\int_c^b (b-x) \Delta x \bigg)^{1/2} \int_c ^b | f^\Delta(x)|^2 \Delta x \bigg] \\
\end{split} 
\end{equation}
for all $c \in [0, b]_{\mathbb{T}}.$ Since $x \leq  \frac{1}{2}(x^2)^\Delta$, then inequality  \eqref{time1} reduces to
\begin{equation}\label{time2}
\int_0^b |f(x)| | f^\Delta(x)| \Delta x \leq \frac{c}{2} \int_0^c | f^\Delta(x)|^2 \Delta x + \frac{b-c}{2}\int_c^b | f^\Delta(x)|^2 \Delta x
\end{equation} 
for all $c \in [0, b]_{\mathbb{T}}.$ If $\mathbb{T} =\mathbb{R},$ then we can choose $c =b/2$ and inequality \eqref{time2}  yields inequality \eqref{1.1}.

Next, inequalities \eqref{ptmin1} and \eqref{ptmin2} are the same as the ones given in \cite[Theorems 2.4 and 2.5]{SAK}, respectively, if we take $ n =1.$
\begin{remark}
When $\mathbb{T} =\mathbb{R}$,  from Corollary \ref{Co3.16} we obtain many known results:
\begin{enumerate}
\item If $n=2$ and $\boldsymbol{\lambda} = (1,1)$, then inequalities \eqref{ptmin1} and \eqref{ptmin2} imply the results of Pachpatte given in  \cite[Theorems 1 and 4]{pach1}, respectively.
\item  Let $n=2, \boldsymbol{\lambda} = (\lambda_1,\lambda_2), \alpha=\beta =1,$ and $\omega = \tau \equiv 1$; then inequality \eqref{ptmin1} reduces to   \cite[Theorem 2.1]{Zhao}.
\end{enumerate}
\end{remark}
\end{remark}

The following result  which can be proved in view of Corollary \ref{coG} and the well-known inequality of arithmetic and geometric means:
\begin{equation}\label{mean}
\prod^k_{j =1}a_j \leq \bigg( \frac{1}{k}\sum^k_{j=1}a_j\bigg)^k, \quad a_j \geq 0 \quad \text{for}\quad j \in [1, k]_{\mathbb{N}}.
\end{equation}

\begin{corollary}
Let $k \in \mathbb{N}$, $\alpha_j ,\beta_j >0$ be such that $\alpha_j + \beta_j >1/k$,  $G_j\in \mathcal{G}^{1,1}_R$, and  $\omega_j$, $ \tau_j$  be in  $\mathcal{W}(\Omega)$, 
 and
$f_j : \Omega \to (-R, R)$ be such that 
$\int_{\Omega}|\frac{\partial^{\boldsymbol{\lambda}} f_j(\textbf{x}) }{\Delta \textbf{x}^{\boldsymbol{\lambda}}}|^{k(\alpha+\beta)}\tau_j(\textbf{x})\Delta \textbf{x}<R$ for $j \in [1, k]_{\mathbb{N}}.$

 If  $f_j \in \mathcal{L}^{k(\alpha_j+\beta_j)}_{\textbf{a}}(\Omega, \tau_j, \boldsymbol{\lambda})$ for all  $j \in [1, k]_{\mathbb{N}}$, and 
\[
{T_{\Omega}}_j:= \bigg[\int_{\Omega}[G'_j( \eta_j(\textbf{x}))]^{\frac{\alpha_j(k\alpha_j+k\beta_j-1)}{\beta_j}}\omega^{\frac{k(\alpha_k+\beta_i)}{\beta_j}}_j(\textbf{x})\tau^{-\frac{\alpha_j}{\beta_j}}_j(\textbf{x})  \Delta \textbf{x}\bigg]^{\frac{\beta_j}{\alpha_j+\beta_j}}<\infty,\]
where
\[ \eta_j (\textbf{x})  := \int_{\Omega_\textbf{x}} \bigg(H_{\boldsymbol{\lambda}}(\textbf{x},\textbf{t})\bigg)^{\frac{k(\alpha_j+\beta_j)}{k(\alpha_j+\beta_j)-1}}(\tau_j(\textbf{t}))^{\frac{1}{1-k(\alpha_j+\beta_i)}}\Delta \textbf{t}, \quad \textbf{x} \in \Omega, \]
then 
\begin{equation}\label{phtich1}
\begin{split}
&\int_{\Omega}\prod^{k}_{j=1}{[G'_j(|f_j(\textbf{x})|)]^{\alpha_j}\bigg|\frac{\partial^{ \boldsymbol{\lambda}   }f_j(\textbf{x}) }{\Delta \textbf{x}^{  \boldsymbol{\lambda}  }}\bigg|^{\alpha_j}\omega_j(\textbf{x})} \Delta \textbf{x} \\
&\leq \frac{1}{k}\sum^k_{j=1}{T_{\Omega j} \bigg[G_j\bigg(\int_{\Omega}\bigg|\frac{\partial^{ \boldsymbol{\lambda}  } f_j(\textbf{x}) }{\Delta \textbf{x}^{  \boldsymbol{\lambda}  }}\bigg|^{k(\alpha_j+\beta_j)}\tau_j(\textbf{x})\Delta \textbf{x}  \bigg)\bigg]^{\frac{\alpha_j}{\alpha_j+\beta_j}}}. \\
\end{split}
\end{equation}

If $f_j \in \mathcal{L}^{k(\alpha+\beta)}_{{\boldsymbol\rho}^{\boldsymbol{\lambda -1} }(\textbf{b})}(\Omega^{\kappa^{\boldsymbol{\lambda-1}}}, \tau_j, \boldsymbol{\lambda})$ for all $j \in [1, k]_{\mathbb{N}}$,  and 
\[
T^*_{\Omega j}:= \bigg[\int_{\Omega}[G'_j(\eta^*_j(\textbf{x}))]^{\frac{\alpha_j(k\alpha_j+k\beta_j-1)}{\beta_j}}\omega^{\frac{k(\alpha_k+\beta_i)}{\beta_j}}_j(\textbf{x})\tau^{-\frac{\alpha_j}{\beta_j}}_j(\textbf{x})  \Delta \textbf{x}\bigg]^{\frac{\beta_j}{\alpha_j+\beta_j}}<\infty, \]
where 
\[ \eta^*_j(\textbf{x})  := \int_{\bar{\Omega}_\textbf{x}} |H_{\boldsymbol{\lambda}}(\textbf{x},\textbf{t})|^{\frac{k(\alpha_j+\beta_j)}{k(\alpha_j+\beta_j)-1}}(\tau_j(\textbf{t}))^{\frac{1}{1-k(\alpha_j+\beta_i)}}\Delta \textbf{t}, \quad \textbf{x} \in \Omega^{\kappa^{\boldsymbol{\lambda-1}}}, \]
then
\begin{equation}\label{phtich2}
\begin{split}
&\int_{\Omega^{\kappa^{\boldsymbol{\lambda-1}}}}\prod^{k}_{j=1}{[G'_j(|f_j(\textbf{x})|)]^{\alpha_j}\bigg|\frac{\partial^{ \boldsymbol{\lambda}   }f_j(\textbf{x}) }{\Delta \textbf{x}^{  \boldsymbol{\lambda}  }}\bigg|^{\alpha_j}\omega_j(\textbf{x})} \Delta \textbf{x} \\
&\leq \frac{1}{k}\sum^k_{j=1}{T^*_{\Omega j} \bigg[G_j\bigg(\int_{\Omega^{\kappa^{\boldsymbol{\lambda-1}}}}\bigg|\frac{\partial^{\boldsymbol{\lambda}   } f_j(\textbf{x}) }{\Delta \textbf{x}^{  \boldsymbol{\lambda}  }}\bigg|^{k(\alpha_j+\beta_j)}\tau_j(\textbf{x})\Delta \textbf{x}  \bigg)\bigg]^{\frac{\alpha_j}{\alpha_j+\beta_j}}}. \\
\end{split}
\end{equation}
\end{corollary}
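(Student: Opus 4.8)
The plan is to reduce \eqref{phtich1} to $k$ separate applications of Corollary \ref{coG} — one for each index $j$ — and then glue the resulting estimates together using the arithmetic--geometric mean inequality \eqref{mean}. Writing
\[
P_j(\boldsymbol{x}) := [G'_j(|f_j(\boldsymbol{x})|)]^{\alpha_j}\bigg|\frac{\partial^{\boldsymbol{\lambda}}f_j(\boldsymbol{x})}{\Delta \boldsymbol{x}^{\boldsymbol{\lambda}}}\bigg|^{\alpha_j}\omega_j(\boldsymbol{x}),
\]
the left-hand side of \eqref{phtich1} is $\int_{\Omega}\prod_{j=1}^{k}P_j(\boldsymbol{x})\,\Delta\boldsymbol{x}$, and each summand on the right-hand side has exactly the shape of the right-hand side of \eqref{ptG1}. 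So the first step is to obtain, for every $j$, a clean estimate for $\int_{\Omega}P_j^{k}\,\Delta\boldsymbol{x}$.

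First I would apply inequality \eqref{ptG1} of Corollary \ref{coG} to the single function $f_j$, but with the parameters rescaled by $k$: take $\alpha\mapsto k\alpha_j$, $\beta\mapsto k\beta_j$, the weight $\omega\mapsto\omega_j^{k}$ (which again lies in $\mathcal{W}(\Omega)$ since $\omega_j$ does), $\tau\mapsto\tau_j$, and $G\mapsto G_j$. The condition $\alpha+\beta>1$ required in Corollary \ref{coG} becomes $k(\alpha_j+\beta_j)>1$, which is precisely the hypothesis $\alpha_j+\beta_j>1/k$; the integrability and class assumptions on $f_j$ likewise translate verbatim. Under this substitution the auxiliary function $\vartheta$ of Corollary \ref{coG} becomes exactly $\eta_j$, and a direct simplification of the exponents — using $\frac{k\alpha_j(k\alpha_j+k\beta_j-1)}{k\beta_j}=\frac{\alpha_j(k\alpha_j+k\beta_j-1)}{\beta_j}$, $(\omega_j^{k})^{\frac{k(\alpha_j+\beta_j)}{k\beta_j}}=\omega_j^{\frac{k(\alpha_j+\beta_j)}{\beta_j}}$, and $\tau_j^{-k\alpha_j/(k\beta_j)}=\tau_j^{-\alpha_j/\beta_j}$ — shows that the constant $N_{\Omega}$ produced by Corollary \ref{coG} coincides with $T_{\Omega j}$. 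Since the integrand $[G'_j(|f_j|)]^{k\alpha_j}|\partial^{\boldsymbol{\lambda}}f_j/\Delta\boldsymbol{x}^{\boldsymbol{\lambda}}|^{k\alpha_j}\omega_j^{k}$ equals $P_j^{k}$ and the outer exponent $\frac{k\alpha_j}{k(\alpha_j+\beta_j)}$ simplifies to $\frac{\alpha_j}{\alpha_j+\beta_j}$, inequality \eqref{ptG1} yields
\[
\int_{\Omega}P_j(\boldsymbol{x})^{k}\,\Delta\boldsymbol{x}\leq T_{\Omega j}\bigg[G_j\bigg(\int_{\Omega}\bigg|\frac{\partial^{\boldsymbol{\lambda}}f_j(\boldsymbol{x})}{\Delta\boldsymbol{x}^{\boldsymbol{\lambda}}}\bigg|^{k(\alpha_j+\beta_j)}\tau_j(\boldsymbol{x})\,\Delta\boldsymbol{x}\bigg)\bigg]^{\frac{\alpha_j}{\alpha_j+\beta_j}}
\]
for each $j\in[1,k]_{\mathbb{N}}$.

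Finally I would combine these. Applying \eqref{mean} with $a_j=P_j(\boldsymbol{x})^{k}\geq 0$ gives $\big(\prod_{j=1}^{k}P_j(\boldsymbol{x})\big)^{k}\leq\big(\frac{1}{k}\sum_{j=1}^{k}P_j(\boldsymbol{x})^{k}\big)^{k}$, and taking $k$-th roots produces the pointwise bound $\prod_{j=1}^{k}P_j(\boldsymbol{x})\leq\frac{1}{k}\sum_{j=1}^{k}P_j(\boldsymbol{x})^{k}$; integrating this over $\Omega$ and inserting the per-index estimate above gives exactly \eqref{phtich1}. Inequality \eqref{phtich2} then follows by the identical argument with \eqref{ptG2} in place of \eqref{ptG1}, the domain $\Omega^{\kappa^{\boldsymbol{\lambda-1}}}$ in place of $\Omega$, and $\eta^*_j$, $T^*_{\Omega j}$ in place of $\eta_j$, $T_{\Omega j}$. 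The only genuinely delicate point is the bookkeeping in the first step: one must check that the $k$-fold rescaling of $(\alpha,\beta,\omega)$ is the unique choice that simultaneously turns $\vartheta$ into $\eta_j$, turns $N_{\Omega}$ into $T_{\Omega j}$, and turns the integrand of \eqref{ptG1} into $P_j^{k}$; once this exponent arithmetic is verified, the remainder is routine.
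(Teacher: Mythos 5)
Your proposal is correct and is essentially the paper's own argument: the paper likewise writes the product as $(\prod_j P_j^k)^{1/k}$, invokes the arithmetic–geometric mean inequality \eqref{mean} to get the pointwise bound $\prod_j P_j\le\frac1k\sum_j P_j^k$, and then applies \eqref{ptG1} and \eqref{ptG2} to each summand with the parameters rescaled by $k$. The only difference is presentational — you perform the per-index application of Corollary \ref{coG} first and spell out the exponent bookkeeping that the paper leaves implicit — and your arithmetic checks out.
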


\begin{proof}
By using  inequality \eqref{mean}, we obtain
\[
\begin{split}
&\prod^{k}_{j=1}{[G'_j(|f_j(\boldsymbol{x})|)]^{\alpha_j}\bigg|\frac{\partial^{  \boldsymbol{\lambda}   }f_j(\boldsymbol{x}) }{\Delta \boldsymbol{x}^{  \boldsymbol{\lambda}    }}\bigg|^{\alpha_j}\omega_j(\boldsymbol{x})}   \\
&=\bigg(\prod^{k}_{j=1}{[G'_j(|f_j(\boldsymbol{x})|)]^{\alpha_j k}\bigg|\frac{\partial^{    \boldsymbol{\lambda}  }f_j(\boldsymbol{x}) }{\Delta \boldsymbol{x}^{   \boldsymbol{\lambda}   }}\bigg|^{\alpha_j k}\omega^k_j(\boldsymbol{x})}\bigg)^{\frac{1}{k}}   \\
& \leq \frac{1}{k}\sum^k_{j=1}{{[G'_j(|f_j(\boldsymbol{x})|)]^{\alpha_j k}\bigg|\frac{\partial^{     \boldsymbol{\lambda}  }f_j(\boldsymbol{x}) }{\Delta \boldsymbol{x}^{   \boldsymbol{\lambda}   }}\bigg|^{\alpha_j k}\omega^k_j(\boldsymbol{x})}  }. \\
\end{split}
\]
Applying inequalities \eqref{ptG1} and \eqref{ptG2} we arrive at \eqref{phtich1} and  \eqref{phtich2}. \qed
\end{proof}

The following theorem generalizes Rozanova's  inequality \cite[Theorem 2.5]{AND} to functions of several variables on time scales.
\begin{theorem}\label{THEO5}
Let $F \in \mathcal{H}^m_{\infty}$, $\phi_i$ be convex, non-negative, and increasing on $[0, \infty)$, and $\varphi_i: \Omega \to \mathbb{R}$ be such that $\frac{\partial^{\boldsymbol{1}} \varphi_i(\textbf{x})}{\Delta \textbf{x}^{\boldsymbol{1}}}$ is non-negative with $\frac{\partial^{k_j} \varphi_i(\textbf{x})}{\Delta_jx^{k_j}_j}|_{x_j = a_j} = 0,$ where $ k_j \in [0, \lambda_j-1]_{\mathbb{N}}$,  $  j \in [1, n]_{\mathbb{N}},$ and  $i \in [1, m]_{\mathbb{N}}$.  For any $  f_i \in \text{C}^{n \boldsymbol{\lambda}}_{\text{rd}}(\Omega) $ is such that  $\frac{\partial^{k_j} f_i(\textbf{x})}{\Delta_jx^{k_j}_j}|_{x_j = a_j} = 0$   for all $ k_j \in [0, \lambda_j-1]_{\mathbb{N}}$,  $  j \in [1, n]_{\mathbb{N}},$ and $i \in [1, m]_{\mathbb{N}}$, we have
\begin{equation}\label{ptTHEO5}
\begin{split}
&\int_{\Omega}\bigg[\sum^m_{i=1}D_iF\bigg(\varphi_1(\textbf{x})\phi_1\bigg(\frac{|f_1(\textbf{x})|}{\varphi_1(\textbf{x})}\bigg), ..., \varphi_m(\textbf{x})\phi_m\bigg(\frac{|f_m(\textbf{x})|}{\varphi_m(\textbf{x})}\bigg)\bigg) \\
&\quad \times \frac{\partial^{\boldsymbol{1}} \varphi_i(\textbf{x}) }{\Delta \textbf{x}^{\boldsymbol{1}} } \phi_i\bigg(H_{  \boldsymbol{\lambda}  }(\textbf{b},\textbf{a})\frac{|\frac{\partial^{  \boldsymbol{\lambda} } f_i(\textbf{x})  }{\Delta \textbf{x}^{  \boldsymbol{\lambda}    }}|}{\frac{\partial^{\boldsymbol{1}} \varphi_i(\textbf{x})}{\Delta \textbf{x}^{\boldsymbol{1}} }}   \bigg)\bigg]\Delta \textbf{x} \\
&\leq  F\bigg( \int_{\Omega}   \frac{\partial^{\boldsymbol{1}} \varphi_1(\textbf{x})}{\Delta \textbf{x}^{\boldsymbol{1}}}\phi_1\bigg( H_{  \boldsymbol{\lambda}    }(\textbf{b},\textbf{a})\frac{ |\frac{\partial^{   \boldsymbol{\lambda}  } f_1(\textbf{x}) }{\Delta \textbf{x}^{  \boldsymbol{\lambda}   }}|}{\frac{\partial^{\boldsymbol{1}} \varphi_1(\textbf{x}) }{\Delta \textbf{x}^{\boldsymbol{1}}}}\bigg) \Delta \textbf{x}, ..., \\
&\quad \int_{\Omega}  \frac{\partial^{\boldsymbol{1}} \varphi_m(\textbf{x}) }{\Delta \textbf{x}^{\boldsymbol{1}}}\phi_m\bigg(H_{\boldsymbol{\lambda}}(\textbf{b},\textbf{a})\frac{| \frac{\partial^{ \boldsymbol{\lambda}    }f_m(\textbf{x})}{\Delta \textbf{x}^{   \boldsymbol{\lambda}   }}|}{\frac{\partial^{\boldsymbol{1}} \varphi_m(\textbf{x})}{\Delta \textbf{x}^{\boldsymbol{1}}}} \bigg) \Delta \textbf{x} \bigg).  \\
\end{split}
\end{equation}

\end{theorem}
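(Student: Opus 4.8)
The plan is to reduce \eqref{ptTHEO5} to Theorem \ref{THEO1} by building, for each $i$, an auxiliary function whose first mixed delta derivative is exactly the weight multiplying $D_iF$ under the integral sign. Write $w_i(\boldsymbol{x})=\frac{\partial^{\boldsymbol{1}}\varphi_i(\boldsymbol{x})}{\Delta\boldsymbol{x}^{\boldsymbol{1}}}\ge 0$ and set
$$P_i(\boldsymbol{x})=w_i(\boldsymbol{x})\,\phi_i\!\left(H_{\boldsymbol{\lambda}}(\boldsymbol{b},\boldsymbol{a})\frac{\big|\frac{\partial^{\boldsymbol{\lambda}}f_i(\boldsymbol{x})}{\Delta\boldsymbol{x}^{\boldsymbol{\lambda}}}\big|}{w_i(\boldsymbol{x})}\right),\qquad g_i(\boldsymbol{x})=\varphi_i(\boldsymbol{x})\,\phi_i\!\left(\frac{|f_i(\boldsymbol{x})|}{\varphi_i(\boldsymbol{x})}\right),$$
so that the left-hand side of \eqref{ptTHEO5} is $\int_{\Omega}\sum_i D_iF(g_1,\dots,g_m)P_i\,\Delta\boldsymbol{x}$ and its right-hand side is $F\big(\int_{\Omega}P_1\,\Delta\boldsymbol{x},\dots,\int_{\Omega}P_m\,\Delta\boldsymbol{x}\big)$. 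I would then introduce $G_i(\boldsymbol{x})=\int_{\Omega_{\boldsymbol{x}}}P_i(\boldsymbol{t})\,\Delta\boldsymbol{t}$, which is non-negative, vanishes whenever some $x_j=a_j$, and satisfies $\frac{\partial^{\boldsymbol{1}}G_i}{\Delta\boldsymbol{x}^{\boldsymbol{1}}}=P_i\ge 0$; rd-continuity on $\Omega$ makes $\int_\Omega P_i\,\Delta\boldsymbol{x}$ finite, so $G_i\in\mathcal{L}^1_{\boldsymbol{a}}(\Omega,1,\boldsymbol{1})$.

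The heart of the argument is the pointwise estimate $g_i(\boldsymbol{x})\le G_i(\boldsymbol{x})$. First, because $f_i$ and $\varphi_i$ vanish together with their lower-order derivatives on the faces $x_j=a_j$, Taylor's formula (as used in the proof of Theorem \ref{THEO3}) gives
$$|f_i(\boldsymbol{x})|\le\int_{\Omega_{\boldsymbol{x}}}H_{\boldsymbol{\lambda}}(\boldsymbol{x},\boldsymbol{t})\Big|\tfrac{\partial^{\boldsymbol{\lambda}}f_i(\boldsymbol{t})}{\Delta\boldsymbol{t}^{\boldsymbol{\lambda}}}\Big|\Delta\boldsymbol{t}\le H_{\boldsymbol{\lambda}}(\boldsymbol{b},\boldsymbol{a})\int_{\Omega_{\boldsymbol{x}}}\Big|\tfrac{\partial^{\boldsymbol{\lambda}}f_i(\boldsymbol{t})}{\Delta\boldsymbol{t}^{\boldsymbol{\lambda}}}\Big|\Delta\boldsymbol{t},$$
while $\varphi_i(\boldsymbol{x})=\int_{\Omega_{\boldsymbol{x}}}w_i(\boldsymbol{t})\,\Delta\boldsymbol{t}$. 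Dividing the two and rewriting $H_{\boldsymbol{\lambda}}(\boldsymbol{b},\boldsymbol{a})|\partial^{\boldsymbol{\lambda}}f_i|=w_i\cdot\big(H_{\boldsymbol{\lambda}}(\boldsymbol{b},\boldsymbol{a})|\partial^{\boldsymbol{\lambda}}f_i|/w_i\big)$ shows that $|f_i(\boldsymbol{x})|/\varphi_i(\boldsymbol{x})$ does not exceed the $w_i$-weighted average over $\Omega_{\boldsymbol{x}}$ of the quantity inside $\phi_i$. Since $\phi_i$ is increasing and convex, applying $\phi_i$ and then the multidimensional Jensen inequality on time scales (with weight $w_i$) yields
$$\phi_i\!\left(\frac{|f_i(\boldsymbol{x})|}{\varphi_i(\boldsymbol{x})}\right)\le\frac{1}{\varphi_i(\boldsymbol{x})}\int_{\Omega_{\boldsymbol{x}}}w_i(\boldsymbol{t})\,\phi_i\!\left(H_{\boldsymbol{\lambda}}(\boldsymbol{b},\boldsymbol{a})\frac{|\frac{\partial^{\boldsymbol{\lambda}}f_i(\boldsymbol{t})}{\Delta\boldsymbol{t}^{\boldsymbol{\lambda}}}|}{w_i(\boldsymbol{t})}\right)\Delta\boldsymbol{t},$$
which is precisely $g_i(\boldsymbol{x})\le G_i(\boldsymbol{x})$.

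To finish, I would invoke monotonicity of $F$: since $0\le g_k\le G_k$ for every $k$ and each $D_iF$ is non-negative and increasing in every variable, $D_iF(g_1,\dots,g_m)\le D_iF(G_1,\dots,G_m)$, so with $P_i\ge 0$,
$$\int_{\Omega}\sum_{i=1}^m D_iF(g_1,\dots,g_m)P_i\,\Delta\boldsymbol{x}\le\int_{\Omega}\sum_{i=1}^m D_iF(G_1,\dots,G_m)\frac{\partial^{\boldsymbol{1}}G_i}{\Delta\boldsymbol{x}^{\boldsymbol{1}}}\,\Delta\boldsymbol{x}.$$
Applying Theorem \ref{THEO1} to $G_1,\dots,G_m$ (with $|G_i|=G_i$) bounds the right-hand side by $F\big(\int_{\Omega}P_1\,\Delta\boldsymbol{x},\dots,\int_{\Omega}P_m\,\Delta\boldsymbol{x}\big)$, which is exactly the right-hand side of \eqref{ptTHEO5}. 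The main obstacle I anticipate is the middle paragraph: making the weighted Jensen step rigorous on a product time scale, together with the bookkeeping needed where $w_i$ vanishes, where the quotient under $\phi_i$ should be read through the same zero-denominator convention employed in Lemma \ref{chainp}.
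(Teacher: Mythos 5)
Your proposal is correct and follows essentially the same route as the paper: Taylor's formula plus the bound $H_{\boldsymbol{\lambda}}(\boldsymbol{x},\boldsymbol{t})\le H_{\boldsymbol{\lambda}}(\boldsymbol{b},\boldsymbol{a})$, the weighted Jensen step to obtain $\varphi_i\phi_i(|f_i|/\varphi_i)\le\int_{\Omega_{\boldsymbol{x}}}W_i\,\Delta\boldsymbol{t}$ (your $P_i$ is the paper's $W_i$), monotonicity of the $D_iF$, and then Theorem \ref{THEO1}. The only cosmetic difference is that the paper routes the estimate through the auxiliary majorant $y_i(\boldsymbol{x})=\int_{\Omega_{\boldsymbol{x}}}H_{\boldsymbol{\lambda}}(\boldsymbol{x},\boldsymbol{t})\big|\frac{\partial^{\boldsymbol{\lambda}}f_i(\boldsymbol{t})}{\Delta\boldsymbol{t}^{\boldsymbol{\lambda}}}\big|\Delta\boldsymbol{t}$ rather than working with $\big|\frac{\partial^{\boldsymbol{\lambda}}f_i}{\Delta\boldsymbol{t}^{\boldsymbol{\lambda}}}\big|$ directly.
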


\begin{proof}
As in  the proof of Theorem \ref{THEO3}, for all $\boldsymbol{x} \in \Omega$, and all $i \in [1, m]_{\mathbb{N}},$ we have
\[
|f_i(\boldsymbol{x})| \leq \int_{\Omega_{\boldsymbol{x}}} H_{    \boldsymbol{\lambda}  }(\boldsymbol{x},\boldsymbol{t})\bigg|\frac{\partial^{    \boldsymbol{\lambda}  } f_i(\boldsymbol{t})}{\Delta \boldsymbol{t}^{   \boldsymbol{\lambda}   }}\bigg| \Delta \boldsymbol{t} := y_i(\boldsymbol{x}).
\]
We see that
\[
y_i(\boldsymbol{x}) =\int_{\Omega_{\boldsymbol{x}}} H_{    \boldsymbol{\lambda}  }(\boldsymbol{x},\boldsymbol{t})\frac{\partial^{    \boldsymbol{\lambda}  } y_i(\boldsymbol{t})}{\Delta \boldsymbol{t}^{   \boldsymbol{\lambda}   }} \Delta \boldsymbol{t} \leq   H_{    \boldsymbol{\lambda}  }(\boldsymbol{b},\boldsymbol{a})  \int_{\Omega_{\boldsymbol{x}}}\frac{\partial^{    \boldsymbol{\lambda}  } y_i(\boldsymbol{t})}{\Delta \boldsymbol{t}^{   \boldsymbol{\lambda}   }}\Delta \boldsymbol{t}, \quad \boldsymbol{x} \in \Omega.
\]
By  using the fact that  $\phi_i, i \in [1, m]_{\mathbb{N}},$ are non-negative, and increasing on $[0, \infty)$, we get
\[
\begin{split}
\phi_i\bigg(\frac{|f_i(\boldsymbol{x})|}{\varphi_i(\boldsymbol{x})}\bigg) &\leq \phi_i\bigg(\frac{y_i(\boldsymbol{x})}{\varphi_i(\boldsymbol{x})}\bigg)\\
&\leq  \phi_i\bigg(H_{   \boldsymbol{\lambda}    }(\boldsymbol{b},\boldsymbol{a})\frac{\int_{\Omega_{\boldsymbol{x}}}\frac{\partial^{     \boldsymbol{\lambda}   } y_i(\boldsymbol{x}) }{\Delta \boldsymbol{t}^{    \boldsymbol{\lambda}   }}\Delta \boldsymbol{t}}{\int_{\Omega_{\boldsymbol{x}}}\frac{\partial^{\boldsymbol{1}}\varphi_i(\boldsymbol{t})}{\Delta \boldsymbol{t}^{\boldsymbol{1}}}\Delta \boldsymbol{t}}\bigg)  \\
&=  \phi_i\bigg(H_{     \boldsymbol{\lambda}   }(\boldsymbol{b},\boldsymbol{a}) \int_{\Omega_{\boldsymbol{x}}}\frac{\frac{\partial^{{\boldsymbol{1}}} \varphi_i(\boldsymbol{t}) }{\Delta \boldsymbol{t}^{\boldsymbol{1}}}\frac{\partial^{    \boldsymbol{\lambda}    }y_i(\boldsymbol{t})}{\Delta \boldsymbol{t}^{     \boldsymbol{\lambda}   }} }{\frac{\partial^{{\boldsymbol{1}}}\varphi_i(\boldsymbol{t}) }{\Delta \boldsymbol{t}^{\boldsymbol{1}}}} \Delta \boldsymbol{t} \bigg),   \\
\end{split}
\]
which, in view of Jensen's inequality \cite{Jen}, we obtain
\begin{equation}\label{ptphi}
 \phi_i\bigg(\frac{|f_i(\boldsymbol{x})|}{\varphi_i(\boldsymbol{x})}\bigg) \leq \frac{1}{\varphi_i(\boldsymbol{x})}\int_{\Omega_{\boldsymbol{x}}}\frac{\partial^{\boldsymbol{1}} \varphi_i(\boldsymbol{t})}{\Delta \boldsymbol{t}^{\boldsymbol{1}}}\phi_i\bigg( H_{    \boldsymbol{\lambda}   }(\boldsymbol{b},\boldsymbol{a})\frac{ \frac{\partial^{     \boldsymbol{\lambda}    } y_i(\boldsymbol{t})}{\Delta \boldsymbol{t}^{     \boldsymbol{\lambda}   }}}{\frac{\partial^{\boldsymbol{1}}\varphi_i(\boldsymbol{t}) }{\Delta \boldsymbol{t}^{\boldsymbol{1} }}}\bigg) \Delta \boldsymbol{t}
\end{equation}
for  $\boldsymbol{x} \in \Omega$ and $i \in [1, m]_{\mathbb{N}}.$
 
Setting
\[
W_i(\boldsymbol{t}) :=\frac{\partial^{\boldsymbol{1}} \varphi_i(\boldsymbol{t}) }{\Delta \boldsymbol{t}^{\boldsymbol{1} }}\phi_i\bigg(H_{      \boldsymbol{\lambda} }(\boldsymbol{b},\boldsymbol{a})\frac{ \frac{\partial^{     \boldsymbol{\lambda}   } y_i(\boldsymbol{t})  }{\Delta \boldsymbol{t}^{     \boldsymbol{\lambda}   }}}{\frac{\partial^{\boldsymbol{1}}\varphi_i(\boldsymbol{t}) }{\Delta \boldsymbol{t}^{\boldsymbol{1}}}}\bigg), \quad \boldsymbol{t} \in \Omega_{\boldsymbol{x}},  \quad i \in [1, m]_{\mathbb{N}}.
\]
Thus, \eqref{ptphi} implies that
\begin{equation}\label{eqW}
\varphi_i(\boldsymbol{x})\phi_i\bigg(\frac{|f_i(\boldsymbol{x})|}{\varphi_i(\boldsymbol{x})}\bigg) \leq \int_{\Omega_{\boldsymbol{x}}}W_i(\boldsymbol{t})\Delta \boldsymbol{t},  \quad \boldsymbol{x} \in \Omega, \quad i \in [1, m]_{\mathbb{N}}.
\end{equation}

Since $D_iF$ for $i \in [1, m]_{\mathbb{N}}$, are non-negative,  increasing in each variable on $(0,  \infty)$ and \eqref{eqW}, we get
\[
\begin{split}
&\int_{\Omega}\bigg[\sum^m_{i=1}D_iF\bigg(\varphi_1(\boldsymbol{x})\phi_1\bigg(\frac{|f_1(\boldsymbol{x})|}{\varphi_1(\boldsymbol{x})}\bigg), ..., \varphi_m(\boldsymbol{x})\phi_m\bigg(\frac{|f_m(\boldsymbol{x})|}{\varphi_m(\boldsymbol{x})}\bigg)\bigg)\\
&\quad \times \frac{\partial^{\boldsymbol{1}}\varphi_i(\boldsymbol{x}) }{\Delta \boldsymbol{x}^{\boldsymbol{1}}} \phi_i\bigg(H_{      \boldsymbol{\lambda} }(\boldsymbol{b},\boldsymbol{a}) \frac{|\frac{\partial^{  \boldsymbol{\lambda}  } f_i(\boldsymbol{x})}{\Delta \boldsymbol{x}^{     \boldsymbol{\lambda}   }}|}{\frac{\partial^{\boldsymbol{1}}\varphi_i(\boldsymbol{x}) }{\Delta \boldsymbol{x}^{\boldsymbol{1}}}}   \bigg)\bigg]\Delta \boldsymbol{x} \\
&\leq \int_{\Omega}\bigg[\sum^m_{i=1}D_iF\bigg(\int_{\Omega_{\boldsymbol{x}}}W_1(\boldsymbol{t})\Delta \boldsymbol{t}, ...,\int_{\Omega_{\boldsymbol{x}}}W_m(\boldsymbol{t})\Delta \boldsymbol{t}   \bigg)W_i(\boldsymbol{x})\bigg]\Delta \boldsymbol{x}. \\
\end{split}
\]
Using \eqref{ptTHEO1}, we obtain \eqref{ptTHEO5}. \qed
\end{proof}

\begin{remark}
\cite[Theorem 2.5]{AND} is a special case of Theorem \ref{THEO5} when $\mathbb{T} = \mathbb{R}$.
\end{remark}

The following theorem is an interesting generalization of Theorem \ref{THEO4}.
\begin{theorem}\label{THEO6}
Let $ G \in  \mathcal{G}^{1, m}_R$ and $\omega_i, \tau_i  \in \mathcal{W}(\Omega)$ for $\in [1, m]_{\mathbb{N}} $ be  such that
\[ P:=\bigg[ \int_{\Omega}{\bigg( \sum^m_{i=1}{D_iG(\nu_1(\textbf{x}), ..., \nu_m(\textbf{x}))\omega^{\frac{\alpha+\beta}{\beta}}_i(\textbf{x})\tau^{-\frac{\alpha}{\beta}}_i(\textbf{x})} \bigg)\Delta \textbf{x}}    \bigg]^{\frac{\beta}{\alpha+\beta}}<\infty, \]
where 
\[ \nu_i(\textbf{x}) = \bigg(\int_{\Omega_\textbf{x}}(\tau_i(\textbf{t}))^{\frac{1}{1-\alpha-\beta}}\Delta \textbf{t} \bigg)^{\frac{\alpha(\alpha+\beta-1)}{\beta}}, \quad \textbf{x} \in \Omega.  \]
Further, for  $i \in [1, m]_{\mathbb{N}}$,  let $\phi_i$ be convex, non-negative, and increasing on $[0, \infty)$, and  $\varphi_i: \Omega \to \mathbb{R}$ be such that $\frac{\partial^{\boldsymbol{1}} \varphi_i (\textbf{x})}{\Delta \textbf{x}^{\boldsymbol{1}}}$ is non-negative with $\frac{\partial^{k_j} \varphi_i(\textbf{x})}{\Delta_jx^{k_j}_j}|_{x_j = a_j} = 0,$ where  $ k_j \in [0, \lambda_j-1]_{\mathbb{N}}$,  $  j \in [1, n]_{\mathbb{N}}.$  If $  f_i \in \text{C}^{n \boldsymbol{\lambda}}_{\text{rd}}(\Omega) $ is such that  $\frac{\partial^{k_j} f_i(\textbf{x})}{\Delta_jx^{k_j}_j}|_{x_j = a_j} = 0$ for all $ k_j \in [0, \lambda_j-1]_{\mathbb{N}}$,  $  j \in [1, n]_{\mathbb{N}},$ and $i \in [1, m]_{\mathbb{N}}$,    and
\[  \int_{\Omega} \bigg|  \frac{\partial^{\boldsymbol{1}} \varphi_i(\textbf{x})}{\Delta \textbf{x}^{\boldsymbol{1}}}\phi_i\bigg( H_{ \boldsymbol{\lambda}    }(\textbf{b},\textbf{a})\frac{|\frac{\partial^{  \boldsymbol{\lambda}    } f_i(\textbf{x}) }{\Delta \textbf{x}^{\boldsymbol{\lambda}     }}|}{\frac{\partial^{\boldsymbol{1}} \varphi_i(\textbf{x}) }{\Delta \textbf{x}^{\boldsymbol{1}} }}\bigg)\bigg|^{\alpha+\beta}\tau_i(\textbf{x}) \Delta \textbf{x}<R   \]
 for all $i \in [1, m]_{\mathbb{N}}$, then we have
\begin{equation}\label{pt37}
\begin{split}
&\int_{\Omega}\bigg[\sum^m_{i=1}D_iG\bigg(\varphi^\alpha_1(\textbf{x})\phi^\alpha_1\bigg(\frac{|f_1(\textbf{x})|}{\varphi_1(\textbf{x})}\bigg), ..., \varphi^\alpha_m(\textbf{x})\phi^\alpha_m\bigg(\frac{|f_m(\textbf{x})|}{\varphi_m(\textbf{x})}\bigg)\bigg)\\
&\quad \times   \bigg|\frac{\partial^{\boldsymbol{1}} \varphi_i(\textbf{x}) }{\Delta \textbf{x}^{\boldsymbol{1}} } \phi_i\bigg(H_{ \boldsymbol{\lambda}      }(\textbf{b},\textbf{a})\frac{|\frac{\partial^{  \boldsymbol{\lambda}   }f_i(\textbf{x})  }{\Delta \textbf{x}^{  \boldsymbol{\lambda}    }}|}{\frac{\partial^{\boldsymbol{1}} \varphi_i(\textbf{x})}{\Delta \textbf{x}^{\boldsymbol{1}} }}   \bigg)\bigg|^\alpha\omega_i(\textbf{x})\bigg]\Delta \textbf{x} \\
&\leq P\bigg[G\bigg( \int_{\Omega} \bigg|  \frac{\partial^{\boldsymbol{1}} \varphi_1(\textbf{x})}{\Delta \textbf{x}^{\boldsymbol{1}}}\phi_1\bigg( H_{ \boldsymbol{\lambda}    }(\textbf{b},\textbf{a})\frac{ |\frac{\partial^{  \boldsymbol{\lambda}    } f_1(\textbf{x}) }{\Delta \textbf{x}^{\boldsymbol{\lambda}     }}|}{\frac{\partial^{\boldsymbol{1}} \varphi_1(\textbf{x}) }{\Delta \textbf{x}^{\boldsymbol{1}} }}\bigg)\bigg|^{\alpha+\beta}\tau_1(\textbf{x}) \Delta \textbf{x}, ...,\\
 &\quad \int_{\Omega} \bigg|  \frac{\partial^{\boldsymbol{1}} \varphi_m(\textbf{x}) }{\Delta \textbf{x}^{\boldsymbol{1}}}\phi_m\bigg(H_{   \boldsymbol{\lambda}  }(\textbf{b},\textbf{a})\frac{| \frac{\partial^{   \boldsymbol{\lambda}  }f_m(\textbf{x})}{\Delta  \textbf{x}^{  \boldsymbol{\lambda}   }} |}{\frac{\partial^{\boldsymbol{1}} \varphi_m(\textbf{x})}{\Delta \textbf{x}^{\boldsymbol{1}}}} \bigg)\bigg|^{\alpha+\beta}\tau_m(\textbf{x}) \Delta \textbf{x} \bigg)\bigg]^{\frac{\alpha}{\alpha+\beta}}. \\
\end{split}
\end{equation}
\end{theorem}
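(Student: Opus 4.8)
The plan is to mirror the proof of Theorem \ref{THEO5} but to conclude by invoking the weighted inequality of Theorem \ref{THEO4}, rather than Theorem \ref{THEO1}, specialized to the multi-index $\boldsymbol{1}$. The underlying idea is that the composition $\varphi_i(\boldsymbol{x})\phi_i(|f_i(\boldsymbol{x})|/\varphi_i(\boldsymbol{x}))$ plays the role of an auxiliary function whose first-order partial delta derivative is the weight $W_i$ built from $\phi_i$, so that the whole statement reduces to Theorem \ref{THEO4} with $\boldsymbol{\lambda}=\boldsymbol{1}$.

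First I would reproduce, unchanged from the proof of Theorem \ref{THEO5}, the pointwise estimate culminating in \eqref{eqW}. Setting
\[
y_i(\boldsymbol{x}) = \int_{\Omega_{\boldsymbol{x}}} H_{\boldsymbol{\lambda}}(\boldsymbol{x},\boldsymbol{t})\bigg|\frac{\partial^{\boldsymbol{\lambda}} f_i(\boldsymbol{t})}{\Delta \boldsymbol{t}^{\boldsymbol{\lambda}}}\bigg|\,\Delta \boldsymbol{t},
\]
Taylor's formula gives $|f_i(\boldsymbol{x})|\le y_i(\boldsymbol{x})$ together with $\partial^{\boldsymbol{\lambda}} y_i/\Delta\boldsymbol{x}^{\boldsymbol{\lambda}}=|\partial^{\boldsymbol{\lambda}} f_i/\Delta\boldsymbol{x}^{\boldsymbol{\lambda}}|$; then the monotonicity of $\phi_i$, the crude bound $H_{\boldsymbol{\lambda}}(\boldsymbol{x},\boldsymbol{t})\le H_{\boldsymbol{\lambda}}(\boldsymbol{b},\boldsymbol{a})$, and Jensen's inequality for the convex $\phi_i$ yield exactly \eqref{eqW}, namely
\[
\varphi_i(\boldsymbol{x})\phi_i\bigg(\frac{|f_i(\boldsymbol{x})|}{\varphi_i(\boldsymbol{x})}\bigg) \le \int_{\Omega_{\boldsymbol{x}}} W_i(\boldsymbol{t})\,\Delta\boldsymbol{t} =: \psi_i(\boldsymbol{x}),
\]
with $W_i$ the same composed-derivative weight as in Theorem \ref{THEO5}. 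This transfers verbatim, since nothing in \eqref{eqW} used the exponents $\alpha,\beta$ or the conjugacy structure of $G$.

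Next I would recast the inequality as an instance of Theorem \ref{THEO4}. The function $\psi_i$ is nonnegative, vanishes whenever some $x_j=a_j$ (as $\Omega_{\boldsymbol{x}}$ degenerates), and satisfies $\partial^{\boldsymbol{1}}\psi_i/\Delta\boldsymbol{x}^{\boldsymbol{1}}=W_i$; hence $\psi_i\in\mathcal{L}^{\alpha+\beta}_{\boldsymbol{a}}(\Omega,\tau_i,\boldsymbol{1})$, the integrability being supplied by the hypothesis on $\int_{\Omega}|W_i|^{\alpha+\beta}\tau_i\,\Delta\boldsymbol{x}$. Because $G\in\mathcal{G}^{1,m}_R\subset\mathcal{H}^m_R$, each $D_iG$ is nonnegative and increasing in every variable, so from $\varphi_i\phi_i(|f_i|/\varphi_i)\le\psi_i$ the entire integrand on the left of \eqref{pt37} is dominated by the corresponding integrand in which each $\varphi_i\phi_i(|f_i|/\varphi_i)$ is replaced by $\psi_i$ and $|\partial^{\boldsymbol{\lambda}} f_i/\Delta\boldsymbol{x}^{\boldsymbol{\lambda}}|$ by $W_i$. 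Applying Theorem \ref{THEO4} to $\psi_1,\dots,\psi_m$ with $\boldsymbol{\lambda}=\boldsymbol{1}$ then finishes the argument: since $h^{(j)}_0\equiv1$ forces $H_{\boldsymbol{1}}\equiv1$, the kernel disappears, the auxiliary functions $V_i$ of \eqref{k2} collapse to the kernel-free expression $\int_{\Omega_{\boldsymbol{x}}}(\tau_i(\boldsymbol{t}))^{1/(1-\alpha-\beta)}\Delta\boldsymbol{t}$ appearing inside $\nu_i$, the constant $K_{\Omega}$ becomes $P$, and the right-hand integrals $\int_{\Omega}W_i^{\alpha+\beta}\tau_i\,\Delta\boldsymbol{x}$ reproduce the arguments of $G$ on the right of \eqref{pt37}.

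The main obstacle is not this final bookkeeping but the first step together with the domain conditions needed to invoke Theorem \ref{THEO4}. Concretely, one must justify the Jensen estimate \eqref{eqW} carefully and, above all, check that $\psi_i$ takes values in $[0,R)$ so that $D_iG(\psi_1,\dots,\psi_m)$ is defined, and that the specialized constant $P$ is finite. Once these technical points are secured, the nonnegativity and monotonicity of the $D_iG$ make the passage from $f_i$ to $\psi_i$ automatic, and the reduction to Theorem \ref{THEO4} delivers \eqref{pt37}.
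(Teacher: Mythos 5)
Your first step---the derivation of the pointwise bound \eqref{eqW} via Taylor's formula, the monotonicity of $\phi_i$, and Jensen's inequality---matches the paper exactly. The gap is in the second step: the theorem is \emph{not} an instance of Theorem \ref{THEO4} applied to $\psi_i=\int_{\Omega_{\boldsymbol{x}}}W_i\,\Delta\boldsymbol{t}$ with $\boldsymbol{\lambda}=\boldsymbol{1}$, because the two statements place the exponent $\alpha$ in different positions. The left-hand side of \eqref{ptTHEO4} carries $[D_iG(|f_1|,\dots,|f_m|)]^{\alpha}$, i.e.\ the $\alpha$-th power \emph{of} $D_iG$, whereas the left-hand side of \eqref{pt37} carries $D_iG(\varphi_1^{\alpha}\phi_1^{\alpha}(\cdot),\dots)$, i.e.\ $D_iG$ evaluated \emph{at} $\alpha$-th powers; likewise the constant $K_{\Omega}$ of \eqref{k2} involves $[D_iG(V_1,\dots,V_m)]^{\alpha(\alpha+\beta-1)/\beta}$ while $P$ involves $D_iG(\nu_1,\dots,\nu_m)$ with that exponent moved inside the arguments, $\nu_i=V_i^{\alpha(\alpha+\beta-1)/\beta}$. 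For a general $G\in\mathcal{G}^{1,m}_R$ there is no inequality between $D_iG(u^{\alpha})$ and $[D_iG(u)]^{\alpha}$ (already for $m=1$ and $G(x)=x^2$ one has $G'(u^{\alpha})=2u^{\alpha}$ versus $[G'(u)]^{\alpha}=2^{\alpha}u^{\alpha}$, and the comparison flips according as $\alpha<1$ or $\alpha>1$), so your domination step yields neither the left-hand side of \eqref{pt37} nor the constant $P$.

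What the paper does instead is rerun the Theorem-\ref{THEO4}-type argument from scratch with the correct placement of exponents: from \eqref{eqW} and H\"{o}lder's inequality with indices $(\alpha+\beta)/(\alpha+\beta-1)$ and $\alpha+\beta$ one obtains
$\varphi_i^{\alpha}(\boldsymbol{x})\phi_i^{\alpha}(\cdot)\le (\nu_i(\boldsymbol{x}))^{\beta/(\alpha+\beta)}\big(\int_{\Omega_{\boldsymbol{x}}}(W_i(\boldsymbol{t}))^{\alpha+\beta}\tau_i(\boldsymbol{t})\,\Delta\boldsymbol{t}\big)^{\alpha/(\alpha+\beta)}$ (this is \eqref{ww}); then condition (iii) of the class $\mathcal{G}^{1,m}_R$ is applied with the conjugate pair $(\alpha+\beta)/\beta$ and $(\alpha+\beta)/\alpha$ to split $D_iG$ at these powered arguments, H\"{o}lder's inequality for sums and then for integrals with the same pair of indices is applied, and the proof is closed by \eqref{ptTHEO1} (Theorem \ref{THEO1}), not by Theorem \ref{THEO4}. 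To repair your argument you would have to carry out exactly this use of property (iii) with the $(\alpha+\beta)/\beta$, $(\alpha+\beta)/\alpha$ splitting, at which point you are reproducing the paper's proof rather than invoking Theorem \ref{THEO4} as a black box.
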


\begin{proof}
As in the proof of Theorem \ref{THEO5}, using \eqref{eqW} and  H\"{o}lder's inequality with  indices $(\alpha+\beta)/(\alpha+\beta-1)$ and $(\alpha+\beta)$, we obtain
\[
\begin{split}
\varphi_i(\boldsymbol{x})\phi_i\bigg(\frac{|f_i(\boldsymbol{x})|}{\varphi_i(\boldsymbol{x})}\bigg) &\leq \int_{\Omega_{\boldsymbol{x}}}W_i(\boldsymbol{t})\Delta \boldsymbol{t}\\
& \leq \bigg(\int_{\Omega_{\boldsymbol{x}}}(\tau_i(\boldsymbol{t}))^{\frac{1}{1-\alpha-\beta}}\Delta \boldsymbol{t} \bigg)^{\frac{\alpha+\beta-1}{\alpha+\beta}}  \bigg( \int_{\Omega_{\boldsymbol{x}}}(W_i(\boldsymbol{t}))^{\alpha+\beta} \tau_i(\boldsymbol{t})\Delta \boldsymbol{t}\bigg)^{\frac{1}{\alpha+\beta}}\\
\end{split}
\]
for  $\boldsymbol{x} \in \Omega$ and $i \in [1, m]_{\mathbb{N}}.$ Hence
\begin{align}
\notag&\varphi^\alpha_i(\boldsymbol{x})\phi^\alpha_i\bigg(\frac{|f_i(\boldsymbol{x})|}{\varphi_i(\boldsymbol{x})}\bigg)\\
 \notag&\leq  \bigg(\int_{\Omega_{\boldsymbol{x}}}(\tau_i(\boldsymbol{t}))^{\frac{1}{1-\alpha-\beta}}\Delta \boldsymbol{t} \bigg)^{\frac{\alpha(\alpha+\beta-1)}{\alpha+\beta}}    \bigg( \int_{\Omega_{\boldsymbol{x}}}(W_i(\boldsymbol{t}))^{\alpha+\beta}\tau_i(\boldsymbol{t})\Delta \boldsymbol{t}\bigg)^{\frac{\alpha}{\alpha+\beta}}\\
&  = (\nu_i(\boldsymbol{x}))^{\frac{\beta}{\alpha+\beta}} \bigg( \int_{\Omega_{\boldsymbol{x}}}(W_i(\boldsymbol{t}))^{\alpha+\beta}\tau_i(\boldsymbol{t})\Delta \boldsymbol{t}\bigg)^{\frac{\alpha}{\alpha+\beta}}, \quad \boldsymbol{x} \in \Omega, \quad i \in [1, m]_{\mathbb{N}}. 
\label{ww}
\end{align}

Thus, by $ G \in  \mathcal{G}^{1, m}_R$ and \eqref{ww}, we get 
\[
\begin{split}
&\int_{\Omega}\bigg[\sum^m_{i=1}D_iG\bigg(\varphi^\alpha_1(\boldsymbol{x})\phi^\alpha_1\bigg(\frac{|f_1(\boldsymbol{x})|}{\varphi_1(\boldsymbol{x})}\bigg), ..., \varphi^\alpha_m(\boldsymbol{x})\phi^\alpha_m\bigg(\frac{|f_m(\boldsymbol{x})|}{\varphi_m(\boldsymbol{x})}\bigg)\bigg)\\
&\quad \times \bigg| \frac{\partial^{\boldsymbol{1}}\varphi_i(\boldsymbol{x}) }{\Delta \boldsymbol{x}^{\boldsymbol{1} }}   \phi_i\bigg(H_{ \boldsymbol{\lambda}   }(\boldsymbol{b},\boldsymbol{a}) \frac{|\frac{\partial^{    \boldsymbol{\lambda}    } f_i(\boldsymbol{x})}{\Delta \boldsymbol{x}^{   \boldsymbol{\lambda}   }}|}{\frac{\partial^{\boldsymbol{1}}\varphi_i(\boldsymbol{x}) }{\Delta \boldsymbol{x}^{\boldsymbol{1}}}}  \bigg)\bigg|^\alpha \omega_i(\boldsymbol{x})  \bigg]\Delta \boldsymbol{x}, \\
&\leq \int_{\Omega}\bigg[\sum^m_{i=1}(D_iG(\nu_1(\boldsymbol{x}), ..., \nu_m(\boldsymbol{x})))^{\frac{\beta}{\alpha+\beta}}\bigg(D_iG\bigg(\int_{\Omega_{\boldsymbol{x}}}(W_1(\boldsymbol{t}))^{\alpha+\beta}\tau_1(\boldsymbol{t})\Delta \boldsymbol{t}, ...,\\
&\quad  \int_{\Omega_{\boldsymbol{x}}}(W_m(\boldsymbol{t}))^{\alpha+\beta}\tau_m(\boldsymbol{t})\Delta \boldsymbol{t} \bigg)\bigg)^{\frac{\alpha}{\alpha+\beta}} \bigg| \frac{\partial^{\boldsymbol{1}}\varphi_i(\boldsymbol{x}) }{\Delta \boldsymbol{x}^{\boldsymbol{1}} }   \phi_i\bigg(H_{\boldsymbol{\lambda}   }(\boldsymbol{b},\boldsymbol{a}) \frac{|\frac{\partial^{   \boldsymbol{\lambda}  } f_i(\boldsymbol{x})}{\Delta \boldsymbol{x}^{   \boldsymbol{\lambda}   }}|}{\frac{\partial^{\boldsymbol{1}}\varphi_i(\boldsymbol{x}) }{\Delta \boldsymbol{x}^{\boldsymbol{1}}}}  \bigg)\bigg|^\alpha \omega_i(\boldsymbol{x})  \bigg]\Delta \boldsymbol{x}, \\
\end{split}
\]
which, applying H\"{o}lder's inequality with  indices $(\alpha+\beta)/\beta$ and $(\alpha+\beta)/\alpha$, yields
\[
\begin{split}
& \leq \bigg[  \int_{\Omega}{\bigg( \sum^m_{i=1}{D_iG(\nu_1(\boldsymbol{x}), ..., \nu_m(\boldsymbol{x}))\omega^{\frac{\alpha+\beta}{\beta}}_i(\boldsymbol{x})\tau^{-\frac{\alpha}{\beta}}_i(\boldsymbol{x})} \bigg)\Delta \boldsymbol{x}}    \bigg]^{\frac{\beta}{\alpha+\beta}}  \\
 &\quad \times \bigg[   \int_{\Omega}\bigg(\sum^m_{i=1}D_iG\bigg(\int_{\Omega_{\boldsymbol{x}}}(W_1(\boldsymbol{t}))^{\alpha+\beta}\tau_1(\boldsymbol{t})\Delta \boldsymbol{t}, ...,\\
&\qquad \int_{\Omega_{\boldsymbol{x}}}(W_m(\boldsymbol{t}))^{\alpha+\beta}\tau_m(\boldsymbol{t})\Delta \boldsymbol{t}  \bigg)(W_i(\boldsymbol{x}))^{\alpha+\beta}\tau_i(\boldsymbol{x})\bigg)\Delta \boldsymbol{x} \bigg]^{\frac{\alpha}{\alpha+\beta}},    \\
\end{split}
\]
which completes the proof  by applying \eqref{ptTHEO1}. \qed
\end{proof}

\section{Applications}
 In this section, we use Opial-type inequalities to establish  Lyapunov-type inequalities for the following half-linear dynamic equation
\begin{equation}\label{Lya}
\frac{\partial^{\boldsymbol{1}}}{\Delta \boldsymbol{x}^{\boldsymbol{1}}} \bigg(r(\boldsymbol{x})\dfrac{\partial^{\boldsymbol{1}} y(\boldsymbol{x})}{\Delta\boldsymbol{x}^{\boldsymbol{1}}} \bigg)  + s(\boldsymbol{x})y^{\boldsymbol\sigma}(\boldsymbol{x}) =0 \quad \text{on } \quad D = [a_1,b_1]_{\mathbb{T}_1}\times[a_2,b_2]_{\mathbb{T}_2},
\end{equation}
where $\mathbb{T}_1, \mathbb{T}_2$  be arbitrary time scales, $r(\boldsymbol{x}), s(\boldsymbol{x})$ be weights on $D$, and $y^{\boldsymbol\sigma}(\boldsymbol{x}) :=y(\sigma_1(x_1),\sigma_2(x_2))$ for all $\boldsymbol{x} = (x_1, x_2)\in D.$

We define 
\[ D_{\boldsymbol{x}} = \{\boldsymbol{t} \in D: \boldsymbol{a} \leq \boldsymbol{t} \leq \boldsymbol{x}\}, \quad \boldsymbol{a }=(a_1, a_2),   \]
\[ \bar{D}_{\boldsymbol{x}} = \{\boldsymbol{t} \in D: \boldsymbol{x} \leq \boldsymbol{t} \leq \boldsymbol{b}\}, \quad \boldsymbol{b }=(b_1, b_2),    \]
\[S(\boldsymbol{x}) = \int_{\bar{D}_{\boldsymbol{x}}}s(\boldsymbol{t})  \Delta \boldsymbol{t},\]
 and 
 \[S^*(x_1) = (b_2-a_2)[\sup_{\boldsymbol{x} \in D}S(\boldsymbol{x})](b_1-x_1) \quad \text{for }\quad \boldsymbol{x} = (x_1, x_2)\in D. \] 
For $\textbf{c} =(c_1, c_2)\in D$, we set
\[   
K_1(\boldsymbol{c}) =\bigg[ \int_{D_{\boldsymbol{c}}} \bigg(  \int_{D_{\boldsymbol{x}}}r^{-1}(\boldsymbol{x}) \Delta \boldsymbol{t} \bigg)(S^*(x_1)+S(\boldsymbol{x}))^2r^{-1}(\boldsymbol{x})  \Delta \boldsymbol{x}\bigg]^{\frac{1}{2}},
\]

\[   
L_1(\boldsymbol{c}) =\bigg[ \int_{\bar{D}_{\boldsymbol{c}}} \bigg(  \int_{\bar{D}_{\boldsymbol{x}}}r^{-1}(\boldsymbol{x}) \Delta \boldsymbol{t} \bigg)(S^*(x_1)+S(\boldsymbol{x}))^2r^{-1}(\boldsymbol{x})  \Delta \boldsymbol{x}\bigg]^{\frac{1}{2}},
\]
and
\[ M = \frac{\sqrt{2}}{2}\max\{K_1(\boldsymbol{c}), L_1(\boldsymbol{c})  \},  \]
where  $\boldsymbol{c} \in D$ is such that $D =D_{\boldsymbol{c}} \cup \bar{D}_{\boldsymbol{c}} $ and $|K_1(\boldsymbol{c}) -L_1(\boldsymbol{c})| \to \min$.

Now, fix $x_1 \in  [a_1,b_1]_{\mathbb{T}_1},$ and put
\[   
N_1(x_1,c'_2) =\bigg[ \int^{c'_2}_{a_2} \bigg(  \int^{x_2}_{a_2}r^{-1}(\boldsymbol{x}) \Delta t_2 \bigg)(S^*(x_1)+S(\boldsymbol{x}))^2r^{-1}(\boldsymbol{x})  \Delta  x_2\bigg]^{\frac{1}{2}},
\]

\[   
N_2(x_1,c'_2) =\bigg[ \int^{b_2}_{c'_2} \bigg(  \int^{b_2}_{x_2}r^{-1}(\boldsymbol{x}) \Delta  t_2 \bigg)(S^*(x_1)+S(\boldsymbol{x}))^2r^{-1}(\boldsymbol{x})  \Delta x_2\bigg]^{\frac{1}{2}},
\]
and
\[ N(x_1) = \frac{\sqrt{2}}{2}\max\{N_1(x_1,c'_2), N_2(x_1,c'_2)  \},  \]
where  $c'_2 \in [a_2,b_2]_{\mathbb{T}}$ is such that $|N_1(x_1,c'_2) -N_2(x_1,c'_2)| \to \min$.
\begin{theorem}\label{ap}
Suppose that  $y$ is a nontrivial solution of \eqref{Lya} such that 
\begin{equation}\label{boundary}
y(\textbf{x})|_{x_j =a_j}=y(\textbf{x})|_{x_j =b_j} = \frac{\partial y(\textbf{x})}{\Delta_i x_i}\bigg|_{x_j = a_j} =  \frac{\partial y(\textbf{x})}{\Delta_i x_i}\bigg|_{x_j = b_j}=0, \quad  i, j=1, 2,
\end{equation}
 then
\begin{equation}\label{eq:}
2M+\sup_{x_1 \in [a_1,b_1]_{\mathbb{T}_1}} [\mu_1(x_1)N(x_1)] \geq 1.
\end{equation}
\end{theorem}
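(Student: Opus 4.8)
The plan is to test the equation \eqref{Lya} against $y^{\boldsymbol\sigma}$, reduce the energy to a single integral of the squared mixed derivative, and then bound that integral by a multiple of itself through the Opial estimate \eqref{ptMIN}. Throughout, write $u(\boldsymbol{x}) := \frac{\partial^{\boldsymbol 1}y(\boldsymbol{x})}{\Delta\boldsymbol{x}^{\boldsymbol 1}}$ for the mixed partial delta derivative. Since the boundary data \eqref{boundary} give $y(\boldsymbol{x})|_{x_j=a_j}=0$, Taylor's formula with $\boldsymbol\lambda=\boldsymbol 1$ (so that $H_{\boldsymbol 1}\equiv 1$) yields the representation $y(\boldsymbol{x})=\int_{D_{\boldsymbol{x}}}u(\boldsymbol{t})\,\Delta\boldsymbol{t}$; in particular, since $r$ is a weight, $y$ is nontrivial if and only if $\int_D r(\boldsymbol{x})|u(\boldsymbol{x})|^2\,\Delta\boldsymbol{x}>0$. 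I would argue by comparison: establishing
\[
\int_D r|u|^2\,\Delta\boldsymbol{x}\ \le\ \Big(2M+\sup_{x_1\in[a_1,b_1]_{\mathbb T_1}}[\mu_1(x_1)N(x_1)]\Big)\int_D r|u|^2\,\Delta\boldsymbol{x}
\]
and then cancelling the strictly positive common factor produces the asserted inequality at once.

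To manufacture this self-improving inequality, I would first multiply \eqref{Lya} by $y^{\boldsymbol\sigma}$ and integrate over $D$. Two successive integrations by parts, one in $x_1$ and one in $x_2$, in which every boundary contribution is annihilated by the conditions $y=\frac{\partial y}{\Delta_i x_i}=0$ on $\partial D$ from \eqref{boundary}, should collapse the flux term into $\int_D r(\boldsymbol{x})|u(\boldsymbol{x})|^2\,\Delta\boldsymbol{x}$ and leave the identity $\int_D r|u|^2\,\Delta\boldsymbol{x}=\int_D s(\boldsymbol{x})\,|y^{\boldsymbol\sigma}(\boldsymbol{x})|^2\,\Delta\boldsymbol{x}$. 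Next I would write $s=\frac{\partial^{\boldsymbol 1}S}{\Delta\boldsymbol{x}^{\boldsymbol 1}}$, which is legitimate because $S(\boldsymbol{x})=\int_{\bar D_{\boldsymbol{x}}}s$ satisfies $\frac{\partial^2 S}{\Delta_1 x_1\Delta_2 x_2}=s$, and integrate by parts once more to transfer the derivatives off $S$ and onto $|y^{\boldsymbol\sigma}|^2$. On a time scale the delta derivative of $|y^{\boldsymbol\sigma}|^2$ unfolds (exactly as in the prototype \eqref{eq1.1}) into a factor of the shape $(y+y^{\boldsymbol\sigma})u$; this is what generates the constant $2$, while the jump contributions at right-scattered points of $\mathbb T_1$ produce the correction weight $S^*$ and the graininess factor $\mu_1$.

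The outcome of the previous step is an estimate of the form
\[
\int_D r|u|^2\,\Delta\boldsymbol{x}\ \le\ 2\int_D |S^*(x_1)+S(\boldsymbol{x})|\,|y(\boldsymbol{x})|\,|u(\boldsymbol{x})|\,\Delta\boldsymbol{x}\ +\ \sup_{x_1\in[a_1,b_1]_{\mathbb T_1}}[\mu_1(x_1)N(x_1)]\int_D r|u|^2\,\Delta\boldsymbol{x}.
\]
To the first integral on the right I apply Corollary \ref{Co3.16}, namely inequality \eqref{ptMIN} with $\alpha=\beta=1$, $\boldsymbol\lambda=\boldsymbol 1$, $f=y$, $\tau=r$ and $\omega=|S^*+S|$: here $\big(\tfrac{\alpha}{\alpha+\beta}\big)^{\alpha/(\alpha+\beta)}=\tfrac{\sqrt2}{2}$ and $\max\{L_{D_{\boldsymbol c}},L^*_{\bar D_{\boldsymbol c}}\}=\max\{K_1(\boldsymbol c),L_1(\boldsymbol c)\}$, so the resulting constant is precisely $M$, giving $\int_D|S^*+S|\,|y|\,|u|\,\Delta\boldsymbol{x}\le M\int_D r|u|^2\,\Delta\boldsymbol{x}$. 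Substituting and dividing by $\int_D r|u|^2>0$ yields $2M+\sup_{x_1}[\mu_1(x_1)N(x_1)]\ge 1$, as required. The hard part will be the second integration by parts in the middle step: on a two-dimensional time scale $\frac{\partial^{\boldsymbol 1}}{\Delta\boldsymbol{x}^{\boldsymbol 1}}(|y^{\boldsymbol\sigma}|^2)$ does not factor cleanly, so the scattered-point remainders have to be tracked carefully, the one-dimensional constant $N(x_1)$ must be produced by applying the $x_2$-direction Opial estimate for each fixed right-scattered $x_1$, and $S^*$ has to absorb the residual boundary defect uniformly through $\sup_{\boldsymbol{x}\in D}S(\boldsymbol{x})$.
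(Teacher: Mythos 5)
Your plan follows essentially the same route as the paper's proof: multiply \eqref{Lya} by $y^{\boldsymbol\sigma}$, integrate by parts using \eqref{boundary} to reduce the flux term to $\int_D r\,|\partial^{\boldsymbol 1}y/\Delta\boldsymbol{x}^{\boldsymbol 1}|^2$, rewrite $s$ via $S$, expand the product rule for $\partial^{\boldsymbol 1}(y^2)/\Delta\boldsymbol{x}^{\boldsymbol 1}$ to generate the factor $2$ together with the $S^*$ and $\mu_1$ correction terms, and then control the two resulting integrals by \eqref{ptMIN} applied in two dimensions (yielding $M$) and in one dimension for each fixed $x_1$ (yielding $N(x_1)$), before cancelling the positive energy. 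This matches the paper's argument step for step, so the proposal is correct in approach.
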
 

\begin{proof}
Multiplying \eqref{Lya}  by $y^{\boldsymbol\sigma}(\boldsymbol{x})$ and integrating both sides of \eqref{Lya}  with respect to $\boldsymbol{x}$ over $D$  we get
\begin{equation}\label{Mul}
\int_{D}\frac{\partial^{\boldsymbol{1}}}{\Delta \boldsymbol{x}^{\boldsymbol{1}}}   \bigg(r(\boldsymbol{x})\dfrac{\partial^{\boldsymbol{1}} y(\boldsymbol{x})}{\Delta\boldsymbol{x}^{\boldsymbol{1}}} \bigg)  y^{\boldsymbol\sigma}(\boldsymbol{x})  \Delta \boldsymbol{x} = -\int_{D} s(\boldsymbol{x})(y^{\boldsymbol\sigma}(\boldsymbol{x}))^{2}\Delta \boldsymbol{x}. 
\end{equation}
By  integrating by parts each side of \eqref{Mul}, we have
\[
\begin{split}
&\int_{D}\frac{\partial^{\boldsymbol{1}}}{\Delta \boldsymbol{x}^{\boldsymbol{1}}}   \bigg(r(\boldsymbol{x})\dfrac{\partial^{\boldsymbol{1}} y(\boldsymbol{x})}{\Delta\boldsymbol{x}^{\boldsymbol{1}}} \bigg) y^{\boldsymbol\sigma}(\boldsymbol{x})  \Delta \boldsymbol{x}\\
& = - \int^{b_1}_{a_1} \int^{b_2}_{a_2}
\frac{\partial}{\Delta_1 x_1} \bigg(r(\boldsymbol{x})\dfrac{\partial^{\boldsymbol{1}} y(\boldsymbol{x})}{\Delta\boldsymbol{x}^{\boldsymbol{1}}} \bigg) \frac{\partial y(\sigma_1(x_1),x_2)}{\Delta_2x_2}  \Delta x_2 \Delta x_1\\
&=\int_{D}r(\boldsymbol{x})\bigg(\frac{\partial^{\boldsymbol{1}} y(\boldsymbol{x})}{\Delta\boldsymbol{x}^{\boldsymbol{1}}} \bigg)^2 \Delta \boldsymbol{x}, \\
\end{split}
  \]
where, we have used  boundary conditions \eqref{boundary}, and
\[
\int_{D} s(\boldsymbol{x})(y^{\boldsymbol\sigma}(\boldsymbol{x}))^{2}\Delta \boldsymbol{x} = - \int_{D}\frac{\partial^{\boldsymbol{1}}S(\boldsymbol{x})}{\Delta \boldsymbol{x}^{\boldsymbol{1}}}(y^{\boldsymbol\sigma}(\boldsymbol{x}))^{2}\Delta \boldsymbol{x}= -\int_{D} S(\boldsymbol{x}) \frac{\partial^{\boldsymbol{1}} y^2(\boldsymbol{x})}{\Delta\boldsymbol{x}^{\boldsymbol{1}}} \Delta \boldsymbol{x},
\]
where, we have used $$y^2(\sigma_1(x_1),a_2) = y^2(\sigma_1(x_1),b_2) =y(\sigma_1(x_1),b_2)= \frac{\partial y^2(a_1,x_2)}{\Delta_2x_2}=\frac{\partial y^2(b_1,x_2)}{\Delta_2x_2}= 0.$$
Therefore, we obtain
\begin{equation}\label{A0}
\int_{D}r(\boldsymbol{x})\bigg(\frac{\partial^{\boldsymbol{1}} y(\boldsymbol{x})}{\Delta\boldsymbol{x}^{\boldsymbol{1}}} \bigg)^2 \Delta \boldsymbol{x}=\bigg|\int_{D} S(\boldsymbol{x}) \frac{\partial^{\boldsymbol{1}} y^2(\boldsymbol{x})}{\Delta\boldsymbol{x}^{\boldsymbol{1}}} \Delta \boldsymbol{x}\bigg|.
\end{equation}
Since
\[  
\frac{\partial^{\boldsymbol{1}} y^2(\boldsymbol{x})}{\Delta\boldsymbol{x}^{\boldsymbol{1}}} \Delta \boldsymbol{x} = \bigg(y^{\boldsymbol\sigma}(\boldsymbol{x})+ y(x_1, \sigma_2(x_2) \bigg) \frac{\partial y(\boldsymbol{x})}{\Delta_1 x_1}  +\bigg( \mu_1(x_1)\frac{\partial y(\boldsymbol{x})}{\Delta_1 x_1}  + 2y(\boldsymbol{x}) \bigg) \frac{\partial^{\boldsymbol{1}} y(\boldsymbol{x})}{\Delta\boldsymbol{x}^{\boldsymbol{1}}}   
\]
it follows that
\begin{equation}\label{A1}
\begin{split}
 \bigg|\int_{D} S(\boldsymbol{x}) \frac{\partial^{\boldsymbol{1}} y^2(\boldsymbol{x})}{\Delta\boldsymbol{x}^{\boldsymbol{1}}} \Delta \boldsymbol{x}\bigg|& \leq 
 \bigg|\int_{D} S(\boldsymbol{x})y^{\boldsymbol\sigma}(\boldsymbol{x})\frac{\partial y(\boldsymbol{x})}{\Delta_1 x_1}\Delta \boldsymbol{x}\bigg|\\
&\quad +\bigg|\int_{D} S(\boldsymbol{x})y(x_1, \sigma_2(x_2))\frac{\partial y(\boldsymbol{x})}{\Delta_1 x_1}\Delta \boldsymbol{x}\bigg|  \\
&\quad + \int_{D} S(\boldsymbol{x})\mu_1(x_1)\bigg|\frac{\partial y(\boldsymbol{x})}{\Delta_1 x_1}\bigg|\bigg|\frac{\partial^{\boldsymbol{1}} y(\boldsymbol{x})}{\Delta\boldsymbol{x}^{\boldsymbol{1}}}\bigg|\Delta \boldsymbol{x}  \\
&\quad + 2\int_{D} S(\boldsymbol{x})|y(\boldsymbol{x})|\bigg|\frac{\partial^{\boldsymbol{1}} y(\boldsymbol{x})}{\Delta\boldsymbol{x}^{\boldsymbol{1}}}\bigg|\Delta \boldsymbol{x}.  \\
\end{split}
\end{equation}
We have 
\begin{align}
\notag& \bigg|\int_{D} S(\boldsymbol{x})y(x_1, \sigma_2(x_2))\frac{\partial y(\boldsymbol{x})}{\Delta_1 x_1}\Delta \boldsymbol{x}\bigg|\\
 \notag&\leq  \int^{b_1}_{a_1}S^*(x_1) \bigg| \int^{b_2}_{a_2} y(x_1, \sigma_2(x_2))\frac{\partial y(\boldsymbol{x})}{\Delta_1 x_1}\Delta x_2\bigg| \Delta x_1  \\ 
\notag& =\int^{b_1}_{a_1} S^*(x_1) \bigg| \int^{b_2}_{a_2} y(x_1, x_2)\frac{\partial^{\boldsymbol{1}} y(\boldsymbol{x})}{\Delta\boldsymbol{x}^{\boldsymbol{1}}}  \Delta x_2\bigg| \Delta x_1    \\
&  =\int_{D}S^*(x_1)| y(\boldsymbol{x})|\bigg|\frac{\partial^{\boldsymbol{1}} (\boldsymbol{x})}{\Delta\boldsymbol{x}^{\boldsymbol{1}}}\bigg|\Delta \boldsymbol{x}, 
\label{A2}
\end{align}
where, we have used  
\[
\begin{split}
\int^{b_2}_{a_2} y(x_1, \sigma_2(x_2))\frac{\partial y(\boldsymbol{x})}{\Delta_1 x_1}\Delta x_2& =y(x_1, x_2)\frac{\partial y(\boldsymbol{x})}{\Delta_1 x_1}\bigg|^{x_2 = b_2}_{x_2=a_2} -  \int^{b_2}_{a_2} y(x_1, x_2)\frac{\partial^{\boldsymbol{1}} y(\boldsymbol{x})}{\Delta\boldsymbol{x}^{\boldsymbol{1}}}  \Delta x_2\\
&= -  \int^{b_2}_{a_2} y(x_1, x_2)\frac{\partial^{\boldsymbol{1}} y(\boldsymbol{x})}{\Delta \boldsymbol{x}^{\boldsymbol{1}}}  \Delta x_2.  \\  
\end{split}
\]

Similarly, we have
\begin{align}
\notag& \bigg|\int_{D} S(\boldsymbol{x})y^{\boldsymbol\sigma}(\boldsymbol{x})\frac{\partial y(\boldsymbol{x})}{\Delta_1 x_1}\Delta \boldsymbol{x}\bigg| \\ 
\notag&\leq \int_{D}S^*(x_1) |y(\sigma_1(x_1), x_2)|\bigg| \frac{\partial^{\boldsymbol{1}}y(\boldsymbol{x})}{\Delta\boldsymbol{x}^{\boldsymbol{1}}}\bigg|\Delta \boldsymbol{x}  \\
\notag& \leq \int_{D}S^*(x_1) \bigg| y(\boldsymbol{x}+ \mu_1(x_1) \frac{\partial y(\boldsymbol{x})}{\Delta_1 x_1}\bigg|\bigg|\frac{\partial^{\boldsymbol{1}}y(\boldsymbol{x})}{\Delta\boldsymbol{x}^{\boldsymbol{1}}}\bigg| \Delta \boldsymbol{x}   \\
\label{A3}& \leq\int_{D} S^*(x_1) |y(\boldsymbol{x})|\bigg|\frac{\partial^{\boldsymbol{1}}y(\boldsymbol{x})}{\Delta\boldsymbol{x}^{\boldsymbol{1}}}\bigg| \Delta \boldsymbol{x}\\
\notag&+ \int_{D}S^*(x_1)\mu_1(x_1)\bigg| \frac{\partial y(\boldsymbol{x})}{\Delta_1 x_1}\bigg|\bigg| \frac{\partial^{\boldsymbol{1}}y(\boldsymbol{x})}{\Delta\boldsymbol{x}^{\boldsymbol{1}}}\bigg| \Delta \boldsymbol{x}. 
\end{align}
From \eqref{A1}, \eqref{A2}, and \eqref{A3}, we get
\[   
\begin{split}
&\bigg| \int_{D} S(\boldsymbol{x}) \frac{\partial^{\boldsymbol{1}} y^2(\boldsymbol{x})}{\Delta\boldsymbol{x}^{\boldsymbol{1}}} \Delta \boldsymbol{x}\bigg|\\
& \leq 2 \int_{D}( S^*(x_1)+S(\boldsymbol{x}))|y(\boldsymbol{x})|\bigg|\frac{\partial^{\boldsymbol{1}} y(\boldsymbol{x})}{\Delta\boldsymbol{x}^{\boldsymbol{1}}}\bigg| \Delta \boldsymbol{x}\\
&\quad +  \int_{D}(S^*(x_1)+S(\boldsymbol{x}))\mu_1(x_1) \bigg|\frac{\partial y(\boldsymbol{x})}{\Delta_1 x_1}\bigg|\bigg| \frac{\partial^{\boldsymbol{1}}y(\boldsymbol{x})}{\Delta\boldsymbol{x}^{\boldsymbol{1}}}\bigg| \Delta \boldsymbol{x}.   \\
\end{split}
 \]
Applying inequality \eqref{ptMIN} with $n=2, \alpha =\beta =1, \boldsymbol{\lambda} =\boldsymbol{1}$, $\omega(\boldsymbol{x}) = S^*(x_1)+ S(\boldsymbol{x})$, and $\tau (\boldsymbol{x}) = r(\boldsymbol{x})$, we get
\begin{equation}\label{B1}
 \int_{D}(  S^*(x_1)+S(\boldsymbol{x}))|y(\boldsymbol{x})|\bigg|\frac{\partial^{\boldsymbol{1}} y(\boldsymbol{x})}{\Delta\boldsymbol{x}^{\boldsymbol{1}}}\bigg| \Delta \boldsymbol{x}\leq M \int_{D}r(\boldsymbol{x})\bigg(\frac{\partial^{\boldsymbol{1}} y(\boldsymbol{x})}{\Delta\boldsymbol{x}^{\boldsymbol{1}}}\bigg)^2 \Delta \boldsymbol{x}. 
 \end{equation}
Applying inequality \eqref{ptMIN} with $n=1, \alpha =\beta =1, \boldsymbol{\lambda} =1$, $\omega(\boldsymbol{x}) = S^*(x_1)+ S(\boldsymbol{x})$, and $\tau (\boldsymbol{x}) = r(\boldsymbol{x})$,   we have
\[
    \int^{b_2}_{a_2}(S^*(x_1)+S(\boldsymbol{x})) \bigg|\frac{\partial y(\boldsymbol{x})}{\Delta_1 x_1}\bigg|\bigg| \frac{\partial^{\boldsymbol{1}}y(\boldsymbol{x})}{\Delta\boldsymbol{x}^{\boldsymbol{1}}}\bigg| \Delta x_2  \leq N(x_1)\int^{b_2}_{a_2}r(\boldsymbol{x}) \bigg( \frac{\partial^{\boldsymbol{1}}y(\boldsymbol{x})}{\Delta\boldsymbol{x}^{\boldsymbol{1}}}\bigg)^2\Delta x_2.
\]
This implies that
\begin{align}
 \notag&\int^{b_1}_{a_1} \int^{b_2}_{a_2}(S^*(x_1)+S(\boldsymbol{x}))\mu_1(x_1) \bigg|\frac{\partial y(\boldsymbol{x})}{\Delta_1 x_1}\bigg|\bigg| \frac{\partial^{\boldsymbol{1}}y(\boldsymbol{x})}{\Delta\boldsymbol{x}^{\boldsymbol{1}}}\bigg| \Delta x_2 \Delta x_1 \\
\notag& \leq \int^{b_1}_{a_1} \mu_1(x_1)N(x_1)\int^{b_2}_{a_2}r(\boldsymbol{x}) \bigg( \frac{\partial^{\boldsymbol{1}}y(\boldsymbol{x})}{\Delta\boldsymbol{x}^{\boldsymbol{1}}}\bigg)^2\Delta x_2 \Delta x_1\\
\label{B2}& \leq \bigg(\sup_{x_1 \in [a_1,b_1]_{\mathbb{T}_1}} [\mu_1(x_1)N(x_1)] \bigg)\int_{D}r(\boldsymbol{x})\bigg(\frac{\partial^{\boldsymbol{1}} y(\boldsymbol{x})}{\Delta\boldsymbol{x}^{\boldsymbol{1}}}\bigg)^2 \Delta \boldsymbol{x}.
\end{align}
Substituting \eqref{B1} and \eqref{B2} into \eqref{A0}, we have 
\[    
\int_{D}r(\boldsymbol{x})\bigg(\frac{\partial^{\boldsymbol{1}} y(\boldsymbol{x})}{\Delta\boldsymbol{x}^{\boldsymbol{1}}} \bigg)^2 \Delta \boldsymbol{x} \leq \bigg(2M+\sup_{x_1 \in [a_1,b_1]_{\mathbb{T}_1}} [\mu_1(x_1)N(x_1)] \bigg)
\int_{D}r(\boldsymbol{x})\bigg(\frac{\partial^{\boldsymbol{1}} y(\boldsymbol{x})}{\Delta\boldsymbol{x}^{\boldsymbol{1}}} \bigg)^2 \Delta \boldsymbol{x}.
\]
Then, we get
\[2M + \sup_{x_1 \in [a_1,b_1]_{\mathbb{T}_1}} [\mu_1(x_1)N(x_1)] \geq 1.  \]
\qed
\end{proof}

In Theorem \ref{ap}  if $r(\boldsymbol{x})  \equiv 1$, then we have the following result.

\begin{corollary}\label{Co1}
Suppose that  $y$ is a nontrivial solution of
\[
\frac{\partial^{\boldsymbol{1}}}{\Delta \textbf{x}^{\boldsymbol{1}}}   \bigg(\dfrac{\partial^{\boldsymbol{1}} y(\textbf{x})}{\Delta\textbf{x}^{\boldsymbol{1}}} \bigg)  +s(\textbf{x}) y^{\boldsymbol\sigma}(\textbf{x}) =0, \quad \boldsymbol{x} \in D
\]
which satisfies boundary conditions \eqref{boundary}, then
\[
2 \max\{K_2(\textbf{c}), L_2(\textbf{c})  \} +\sup_{x_1 \in [a_1,b_1]_{\mathbb{T}_1}}\bigg( \mu_1(x_1) \max\{N_3(x_1,c'_2), N_4(x_1,c'_2)  \} \bigg) \geq \sqrt{2},
\]
where
\[   
K_2(\textbf{c}) =\bigg[ \int_{D_{\textbf{c}}} (x_1-a_1)(x_2-a_2)[S^*(x_1)+S(\textbf{x})]^2 \Delta \textbf{x}\bigg]^{\frac{1}{2}},
\]
\[   
L_2(\textbf{c}) =\bigg[ \int_{\bar{D}_{\textbf{c}}} (b_1-x_1)(b_2-x_2)[S^*(x_1)+ S(\textbf{x})]^2    \Delta \textbf{x}\bigg]^{\frac{1}{2}},
\]
with $\textbf{c} \in D$ is such that $D = D_{\textbf{c}} \cup \bar{D}_{\textbf{c}}$ and $|K_2(\textbf{c}) -L_2(\textbf{c})| \to \min,$ and 
\[   
N_3(x_1,c'_2) =\bigg[ \int^{c'_2}_{a_2} (x_2-a_2) [S^*(x_1)+ S(\textbf{x})]^2  \Delta  x_2\bigg]^{\frac{1}{2}},
\]
\[   
N_4(x_1,c'_2) =\bigg[ \int^{b_2}_{c'_2} (b_2-x_2)  [S^*(x_1)+ S(\textbf{x})]^2  \Delta x_2\bigg]^{\frac{1}{2}},
\]
for   $x_1 \in  [a_1,b_1]_{\mathbb{T}_1}$,  $c'_2 \in [a_2,b_2]_{\mathbb{T}}$ is such that $|N_3(x_1,c'_2) -N_4(x_1,c'_2)| \to \min$.
\end{corollary}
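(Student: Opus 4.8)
The plan is to obtain this corollary as a direct specialization of Theorem~\ref{ap} to the case $r(\boldsymbol{x}) \equiv 1$. First I would invoke the conclusion of Theorem~\ref{ap}, which is valid for the general half-linear equation \eqref{Lya} with arbitrary weight $r$ and reads
\[
2M + \sup_{x_1 \in [a_1,b_1]_{\mathbb{T}_1}} [\mu_1(x_1)N(x_1)] \geq 1,
\]
where $M = \frac{\sqrt{2}}{2}\max\{K_1(\boldsymbol{c}), L_1(\boldsymbol{c})\}$ and $N(x_1) = \frac{\sqrt{2}}{2}\max\{N_1(x_1,c'_2), N_2(x_1,c'_2)\}$. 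The task is then purely computational: to evaluate the four constants $K_1, L_1, N_1, N_2$ under the assumption $r \equiv 1$ and identify them with $K_2, L_2, N_3, N_4$.

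The key computation uses the elementary time scales identity $\int_s^t \Delta u = t-s$, valid on any time scale since the identity map has delta derivative equal to $1$. Applying this to the inner iterated integrals when $r^{-1}\equiv 1$ gives
\[
\int_{D_{\boldsymbol{x}}} \Delta\boldsymbol{t} = (x_1-a_1)(x_2-a_2), \qquad \int_{\bar{D}_{\boldsymbol{x}}} \Delta\boldsymbol{t} = (b_1-x_1)(b_2-x_2),
\]
and likewise $\int_{a_2}^{x_2}\Delta t_2 = x_2-a_2$, $\int_{x_2}^{b_2}\Delta t_2 = b_2-x_2$ for the one-dimensional constants. Substituting these into the definitions of $K_1(\boldsymbol{c})$, $L_1(\boldsymbol{c})$, $N_1(x_1,c'_2)$, $N_2(x_1,c'_2)$ and setting the remaining factor $r^{-1}(\boldsymbol{x}) = 1$ yields exactly $K_2(\boldsymbol{c})$, $L_2(\boldsymbol{c})$, $N_3(x_1,c'_2)$, $N_4(x_1,c'_2)$, respectively.

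Finally I would assemble the pieces: with $r\equiv 1$ we have $M = \frac{\sqrt{2}}{2}\max\{K_2(\boldsymbol{c}),L_2(\boldsymbol{c})\}$ and $N(x_1)=\frac{\sqrt{2}}{2}\max\{N_3(x_1,c'_2),N_4(x_1,c'_2)\}$, so the inequality of Theorem~\ref{ap} becomes
\[
\sqrt{2}\,\max\{K_2(\boldsymbol{c}),L_2(\boldsymbol{c})\} + \tfrac{\sqrt{2}}{2}\sup_{x_1}\big[\mu_1(x_1)\max\{N_3(x_1,c'_2),N_4(x_1,c'_2)\}\big] \geq 1,
\]
and multiplying through by $\sqrt{2}$ gives the claimed bound with right-hand side $\sqrt{2}$. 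There is no genuine obstacle here beyond verifying that the choices $\omega(\boldsymbol{x}) = S^*(x_1)+S(\boldsymbol{x})$ and $\tau = r = 1$ made in Theorem~\ref{ap} are precisely the ones appearing in the constants, and that the minimizing points $\boldsymbol{c}$ and $c'_2$ carry over unchanged; the entire content is the reduction of the nested delta integrals to lengths.
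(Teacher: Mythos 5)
Your proposal is correct and follows exactly the paper's (implicit) argument: the paper derives Corollary~\ref{Co1} simply by setting $r(\boldsymbol{x})\equiv 1$ in Theorem~\ref{ap}, so that the inner integrals collapse to $(x_1-a_1)(x_2-a_2)$, etc., turning $K_1,L_1,N_1,N_2$ into $K_2,L_2,N_3,N_4$, and then rescaling the inequality by $\sqrt{2}$. The computation checks out, so nothing further is needed.
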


As a special case of Corollary \ref{Co1}, when $\mathbb{T}_1 = \mathbb{T}_2 = \mathbb{R}$,  we see that if $\boldsymbol{c}= (\frac{a_1+b_1}{2}, b_2) $ and  $\boldsymbol{c}= (a_1, \frac{a_2+b_2}{2})$ are in $  \in [a_1,b_1]\times[a_2,b_2]$, then $|K_2(\boldsymbol{c}) -L_2(\boldsymbol{c})| = 0$, and $\mu_1(x_1) = 0$, we have the following result.
\begin{corollary}
Suppose that  $y$ is a nontrivial solution of
\[
\frac{\partial^{\boldsymbol{1}}}{\partial \textbf{x}^{\boldsymbol{1}}}   \bigg(\dfrac{\partial^{\boldsymbol{1}} y(\textbf{x})}{\partial\textbf{x}^{\boldsymbol{1}}} \bigg) + s(\textbf{x})y(\textbf{x}) =0, \quad  \boldsymbol{x} \in [a_1,b_1]\times[a_2,b_2]
\]
which satisfies \eqref{boundary}, then
\[
   \int_{D} (x_1-a_1)(x_2-a_2)[S^*(x_1)+S(\textbf{x})]^2  d\textbf{x}  \geq 1.
\]
\end{corollary}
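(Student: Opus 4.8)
The plan is to read this statement off as the $\mathbb{T}_1=\mathbb{T}_2=\mathbb{R}$ specialization of Corollary \ref{Co1}. On $\mathbb{R}$ every point is right-dense, so the forward jump is the identity, $\sigma_j(x_j)=x_j$, and this has three effects. First, $y^{\boldsymbol\sigma}=y$, so the half-linear dynamic equation of Corollary \ref{Co1} collapses to the ordinary equation in the statement. Second, the graininess vanishes, $\mu_1(x_1)=\sigma_1(x_1)-x_1\equiv 0$, so the whole term $\sup_{x_1}\bigl(\mu_1(x_1)\max\{N_3,N_4\}\bigr)$ in the conclusion of Corollary \ref{Co1} disappears. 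Third, the delta integrals become ordinary Lebesgue integrals, so with $r\equiv 1$ the inner integrals reduce to areas, $\int_{D_{\boldsymbol x}}\Delta\boldsymbol t=(x_1-a_1)(x_2-a_2)$ and $\int_{\bar D_{\boldsymbol x}}\Delta\boldsymbol t=(b_1-x_1)(b_2-x_2)$; this is precisely why $K_1,L_1$ of Theorem \ref{ap} turn into the polynomially weighted $K_2,L_2$ recorded in Corollary \ref{Co1}. No new analytic input is needed: the Opial-type estimate has already been spent in the chain ending at \eqref{ptMIN}.

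With these reductions, Corollary \ref{Co1} gives, for an admissible splitting point $\boldsymbol c$,
\[
2\max\{K_2(\boldsymbol c),L_2(\boldsymbol c)\}\ge\sqrt2,\qquad\text{equivalently}\qquad \max\{K_2(\boldsymbol c)^2,\,L_2(\boldsymbol c)^2\}\ge\tfrac12 .
\]
The remaining task is to turn this into one inequality for the integral over all of $D$ against the \emph{forward} weight $(x_1-a_1)(x_2-a_2)$. I would do this by choosing $\boldsymbol c$ so that the two corner contributions balance, $K_2(\boldsymbol c)=L_2(\boldsymbol c)$, which forces $K_2(\boldsymbol c)^2+L_2(\boldsymbol c)^2=2\max\{K_2^2,L_2^2\}\ge 1$.

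It then remains to dominate this sum by the target integral. Taking $\boldsymbol c$ beyond the midpoint of each edge (the symmetric choices $(\tfrac{a_1+b_1}{2},b_2)$ and $(a_1,\tfrac{a_2+b_2}{2})$ flagged just before the statement) guarantees that on $\bar D_{\boldsymbol c}$ one has $x_1-a_1\ge b_1-x_1$ and $x_2-a_2\ge b_2-x_2$, so the backward weight is pointwise dominated by the forward weight there. Since $D_{\boldsymbol c}$ and $\bar D_{\boldsymbol c}$ are disjoint subboxes of $D$, adding the two integrals yields
\[
K_2(\boldsymbol c)^2+L_2(\boldsymbol c)^2\le\int_{D}(x_1-a_1)(x_2-a_2)\bigl(S^*(x_1)+S(\boldsymbol x)\bigr)^2\,d\boldsymbol x,
\]
and combining with $K_2^2+L_2^2\ge 1$ gives the claim.

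The delicate point is entirely the choice of $\boldsymbol c$: one must simultaneously balance the two corner boxes ($K_2=L_2$), keep $\boldsymbol c$ past the midpoints so the backward weight stays controlled by the forward one, and respect the admissibility $D=D_{\boldsymbol c}\cup\bar D_{\boldsymbol c}$ that Corollary \ref{Co1} requires — constraints that pull against one another in two variables. Reconciling them (or, equivalently, arguing that the two symmetric midpoint splittings together furnish the needed estimate) and carefully carrying the constant $\tfrac{\sqrt2}{2}$ of $M$ through to the final $1$ is where the real bookkeeping lies; the inequality machinery itself contributes nothing beyond the specialization and the elementary weight comparison.
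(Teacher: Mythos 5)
Your overall strategy is exactly the paper's: specialize Corollary \ref{Co1} to $\mathbb{T}_1=\mathbb{T}_2=\mathbb{R}$, use $\mu_1\equiv 0$ to kill the graininess term, and convert $2\max\{K_2(\boldsymbol{c}),L_2(\boldsymbol{c})\}\ge\sqrt2$ into the stated integral bound. The weight comparison you add --- that $(b_1-x_1)(b_2-x_2)\le(x_1-a_1)(x_2-a_2)$ on $\bar{D}_{\boldsymbol{c}}$ once $c_j\ge\tfrac{a_j+b_j}{2}$, so that $K_2^2+L_2^2$ is dominated by the forward-weighted integral over all of $D$ --- is the right (and in the paper unstated) bridge from the two-piece bound to the single integral.

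However, the step you defer to ``bookkeeping'' is a genuine gap, and as set up it cannot be closed. Corollary \ref{Co1} requires $D=D_{\boldsymbol{c}}\cup\bar{D}_{\boldsymbol{c}}$; in two variables this forces $\boldsymbol{c}$ to be a corner of $D$ (test the points $(a_1,b_2)$ and $(b_1,a_2)$: each must lie in one of the two boxes, which pins $\boldsymbol{c}$ to $(a_1,a_2)$ or $(b_1,b_2)$). At a corner one of $K_2,L_2$ is an integral over a degenerate region and vanishes, so the balanced choice $K_2=L_2$ you need is unavailable, and the admissible choice $\boldsymbol{c}=(b_1,b_2)$ yields only $K_2^2\ge\tfrac12$, i.e.\ the target integral bounded below by $\tfrac12$ rather than by $1$. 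The ``two symmetric midpoint splittings'' do not rescue this: for $\boldsymbol{c}=(\tfrac{a_1+b_1}{2},b_2)$ the set $\bar{D}_{\boldsymbol{c}}$ is a segment of measure zero, so again $L_2=0$ and $|K_2-L_2|=K_2\neq 0$ for nontrivial $s$. This defect is inherited from the paper, whose entire justification is the sentence preceding the corollary and which asserts $|K_2(\boldsymbol{c})-L_2(\boldsymbol{c})|=0$ for those same $\boldsymbol{c}$; that assertion fails, so neither your argument nor the paper's actually reaches the constant $1$. To repair the proof one would need a genuine multi-piece analogue of \eqref{ptMIN} adapted to the four sub-rectangles cut out by an interior $\boldsymbol{c}$ (so that a balanced, past-the-midpoint $\boldsymbol{c}$ becomes admissible), or else settle for the weaker constant $\tfrac12$.
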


\begin{remark}
We see that the sufficient condition that equation \eqref{Lya} does not have a nontrivial solution $y$ such that
\[
y(\boldsymbol{x})|_{x_j =a_j}=y(\boldsymbol{x})|_{x_j =b_j} = \frac{\partial y(\boldsymbol{x})}{\Delta_i x_i}\bigg|_{x_j = a_j} =  \frac{\partial y(\boldsymbol{x})}{\Delta_i x_i}\bigg|_{x_j= b_j}=0, \quad i, j=1, 2,
\]
can be obtained from Theorem \ref{ap}.
\end{remark}

Now, we consider the following half-linear delay dynamic equation
\begin{equation}\label{delay}
\frac{\partial^{\boldsymbol{1}}} {\Delta \boldsymbol{x}^{\boldsymbol{1}}}  \bigg(r(\boldsymbol{x})\dfrac{\partial^{\boldsymbol{1}} y(\boldsymbol{x})}{\Delta\boldsymbol{x}^{\boldsymbol{1}}} \bigg) + s(\boldsymbol{x})y(\theta(\boldsymbol{x})) =0 \quad \text{on } \quad  D,
\end{equation}
where $\theta(\boldsymbol{x}) = (\theta_1(x_1), \theta_2(x_2))$ for all $\boldsymbol{x} = (x_1,x_2) \in D, $ $\theta_j : \mathbb{T}_j \to \mathbb{T}_j$, $\theta_j(x_j) \leq x_j$, and $\lim_{x_j \to \infty}\theta_j(x_j) = \infty$ for  $j =1, 2.$
\begin{corollary}
Suppose that  $y$ is a nontrivial solution of equation \eqref{delay} which satisfies boundary conditions \eqref{boundary} and $\frac{\partial y(\boldsymbol{x})}{\Delta_1 x_1}, $ $ \frac{\partial y(\boldsymbol{x})}{\Delta_2 x_2},$   do not change sign in $(a_1,b_1)_{\mathbb{T}_1}\times(a_2,b_2)_{\mathbb{T}_2}$, then 
\[
2M+\sup_{x_1 \in [a_1,b_1]_{\mathbb{T}_1}}[ \mu_1(x_1)N(x_1)] \geq 1.
\]

\end{corollary}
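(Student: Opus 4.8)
The plan is to follow the proof of Theorem \ref{ap} essentially verbatim, the only genuinely new ingredient being the treatment of the delay term $s(\boldsymbol{x})y(\theta(\boldsymbol{x}))$. First I would multiply \eqref{delay} by $y^{\boldsymbol\sigma}(\boldsymbol{x})$ and integrate over $D$. Since the multiplier is the same $y^{\boldsymbol\sigma}$ as in Theorem \ref{ap}, the integration by parts on the left-hand side is unchanged, and the boundary conditions \eqref{boundary} annihilate all boundary terms, so that left-hand side again collapses to $\int_{D}r(\boldsymbol{x})(\frac{\partial^{\boldsymbol{1}} y(\boldsymbol{x})}{\Delta\boldsymbol{x}^{\boldsymbol{1}}})^2\Delta\boldsymbol{x}$. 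This gives, in place of \eqref{A0},
\[
\int_{D}r(\boldsymbol{x})\bigg(\frac{\partial^{\boldsymbol{1}} y(\boldsymbol{x})}{\Delta\boldsymbol{x}^{\boldsymbol{1}}}\bigg)^2\Delta\boldsymbol{x}=\bigg|\int_{D}s(\boldsymbol{x})y(\theta(\boldsymbol{x}))y^{\boldsymbol\sigma}(\boldsymbol{x})\Delta\boldsymbol{x}\bigg|\le \int_{D}s(\boldsymbol{x})|y(\theta(\boldsymbol{x}))|\,|y^{\boldsymbol\sigma}(\boldsymbol{x})|\Delta\boldsymbol{x}.
\]

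The decisive step is to remove the delay via the pointwise bound $|y(\theta(\boldsymbol{x}))|\le|y^{\boldsymbol\sigma}(\boldsymbol{x})|$. Because $\theta_j(x_j)\le x_j\le\sigma_j(x_j)$ for $j=1,2$, the point $\theta(\boldsymbol{x})$ lies below $(\sigma_1(x_1),\sigma_2(x_2))$ in the componentwise order; and since each partial derivative $\frac{\partial y(\boldsymbol{x})}{\Delta_j x_j}$ is assumed not to change sign on $(a_1,b_1)_{\mathbb{T}_1}\times(a_2,b_2)_{\mathbb{T}_2}$, the function $y$ is monotone in each variable with a fixed sign pattern dictated by the vanishing of $y$ on the boundary. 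A routine monotonicity argument then yields $|y(\theta(\boldsymbol{x}))|\le|y^{\boldsymbol\sigma}(\boldsymbol{x})|$. Substituting this reduces the right-hand side above to $\int_{D}s(\boldsymbol{x})(y^{\boldsymbol\sigma}(\boldsymbol{x}))^2\Delta\boldsymbol{x}$, which is exactly the quantity appearing immediately after \eqref{Mul} in the proof of Theorem \ref{ap}.

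From here the argument is an unaltered transcription of Theorem \ref{ap}: integrate by parts to introduce $S(\boldsymbol{x})=\int_{\bar{D}_{\boldsymbol{x}}}s(\boldsymbol{t})\Delta\boldsymbol{t}$, expand $\frac{\partial^{\boldsymbol{1}}y^2(\boldsymbol{x})}{\Delta\boldsymbol{x}^{\boldsymbol{1}}}$ and bound the four resulting terms as in \eqref{A1}--\eqref{A3}, and then apply the Opial-type inequality \eqref{ptMIN} on $D_{\boldsymbol{c}}$ and $\bar{D}_{\boldsymbol{c}}$ (taking $n=2$, $\alpha=\beta=1$, $\boldsymbol{\lambda}=\boldsymbol{1}$, $\omega(\boldsymbol{x})=S^*(x_1)+S(\boldsymbol{x})$, $\tau(\boldsymbol{x})=r(\boldsymbol{x})$) together with its one-dimensional counterpart for the $\mu_1$ term. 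This produces
\[
\int_{D}r(\boldsymbol{x})\bigg(\frac{\partial^{\boldsymbol{1}} y(\boldsymbol{x})}{\Delta\boldsymbol{x}^{\boldsymbol{1}}}\bigg)^2\Delta\boldsymbol{x}\le\bigg(2M+\sup_{x_1\in[a_1,b_1]_{\mathbb{T}_1}}[\mu_1(x_1)N(x_1)]\bigg)\int_{D}r(\boldsymbol{x})\bigg(\frac{\partial^{\boldsymbol{1}} y(\boldsymbol{x})}{\Delta\boldsymbol{x}^{\boldsymbol{1}}}\bigg)^2\Delta\boldsymbol{x}.
\]
As $y$ is nontrivial and vanishes on the boundary, $\int_{D}r(\boldsymbol{x})(\frac{\partial^{\boldsymbol{1}} y(\boldsymbol{x})}{\Delta\boldsymbol{x}^{\boldsymbol{1}}})^2\Delta\boldsymbol{x}>0$, so dividing through yields the claimed inequality.

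I expect the only real obstacle to be the monotonicity bound $|y(\theta(\boldsymbol{x}))|\le|y^{\boldsymbol\sigma}(\boldsymbol{x})|$: one must carefully track the common sign of $y$ forced by the non-vanishing of the two partial derivatives together with the boundary conditions, so that moving from $\theta(\boldsymbol{x})$ up to $(\sigma_1(x_1),\sigma_2(x_2))$ genuinely does not decrease $|y|$. Once this is in hand, every remaining estimate is identical to the corresponding step in the proof of Theorem \ref{ap}.
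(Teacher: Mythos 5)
Your proposal follows the paper's proof exactly: the paper likewise assumes (without loss of generality) that both partial derivatives are positive, deduces from $\theta(\boldsymbol{x})\leq\boldsymbol{x}$ and monotonicity that $y(\theta(\boldsymbol{x}))\leq y^{\boldsymbol\sigma}(\boldsymbol{x})$, and then declares the remainder identical to the proof of Theorem \ref{ap}. Your version with absolute values, $|y(\theta(\boldsymbol{x}))|\leq|y^{\boldsymbol\sigma}(\boldsymbol{x})|$, is if anything slightly more careful than the paper's, which only records the one-sided inequality.
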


\begin{proof}
Since $\frac{\partial y(\boldsymbol{x})}{\Delta_1 x_1},  \frac{\partial y(\boldsymbol{x})}{\Delta_2 x_2}$ do not change sign in $(a_1,b_1)_{\mathbb{T}_1}\times(a_2,b_2)_{\mathbb{T}_2}$ we can assume that \eqref{delay} has a solution $y$ satisfying $\frac{\partial y(\boldsymbol{x})}{\Delta_1 x_1}>0,  \frac{\partial y(\boldsymbol{x})}{\Delta_2 x_2}>0$. Therefore, since $\theta(\boldsymbol{x}) \leq \boldsymbol{x}$, that $y(\theta(\boldsymbol{x})) \leq y^{\boldsymbol\sigma}(\boldsymbol{x})$.
The remainder of the proof is similar to the proof of Theorem \ref{ap}  and hence  omitted.\qed
\end{proof}

Finally, we give an upper bound of the solutions of the following integro-partial dynamic equation
\begin{equation}\label{par}
\dfrac{\partial^{\boldsymbol{1}} y(\boldsymbol{x})}{\Delta  \boldsymbol{x}^{\boldsymbol{1}}} = \zeta\bigg(\boldsymbol{x}, \frac{\partial^{\boldsymbol{1}} y(\boldsymbol{x})}{\Delta  \boldsymbol{x}^{\boldsymbol{1}}}, I(y(\boldsymbol{x}))\bigg), \quad \boldsymbol{x} \in  \Omega,
\end{equation}
with the initial conditions $y(\boldsymbol{x})|_{x_j =a_j} = 0$ for  all  $j \in [1, n]_{\mathbb{N}}$, where 
\[I(y(\boldsymbol{x})) = \int_{\Omega_{\boldsymbol{x}}}\Phi \bigg(\boldsymbol{t}, y(\boldsymbol{t}),\frac{\partial^{\boldsymbol{1}} y(\boldsymbol{t})}{\Delta  \boldsymbol{t}^{\boldsymbol{1}}} \bigg) \Delta \boldsymbol{t}.  \]

\begin{theorem}
Suppose that  $\Phi (\textbf{x}, y(\textbf{x}),\frac{\partial^{\boldsymbol{1}} y(\textbf{x})}{\Delta  \textbf{x}^{\boldsymbol{1}}}) \leq \omega(\textbf{x}) |y(\textbf{x})|^\beta | \frac{\partial^{\boldsymbol{1}} y(\textbf{x})}{\Delta  \textbf{x}^{\boldsymbol{1}}}|^\alpha$,
where $\omega \in \mathcal{W}(\Omega)$, and
\begin{equation}\label{ptzeta}
\bigg|\zeta\bigg(\textbf{x}, \frac{\partial^{\boldsymbol{1}   } y(\textbf{x})}{\Delta  \textbf{x}^{\boldsymbol{1}}}, I(y(\textbf{x}))\bigg)\bigg| \leq w_1(\textbf{x})+ w_2(\textbf{x})\bigg| \frac{\partial^{ \boldsymbol{1} } y(\textbf{x})}{\Delta  \textbf{x}^{\boldsymbol{1}}} \bigg|^\gamma + w_3(\textbf{x})|I(y(\textbf{x}))|, 
\end{equation}
where $\gamma \in (0; 1)$, $w_1, w_2,$ and $w_3$ are non-negative on $\Omega$, $w_2(\textbf{x}) < 1$ for all $\textbf{x}\in \Omega$. If  equation \eqref{par} has a 
solution  $y \in \mathcal{L}^{\alpha+\beta}_{\textbf{a}}(\Omega, 1, \boldsymbol{1})$, then
\begin{equation}\label{dgy}
y(\textbf{x}) \leq \int_{\Omega_{\textbf{x}}}[A^{1-\alpha-\beta}(\textbf{t}) +(1-\alpha-\beta)B(\textbf{t}) \Vol(\Omega_{\textbf{t}}) ]^{\frac{1}{1-\alpha-\beta}}   \Delta \textbf{t}
\end{equation}
as long as the right-hand side integral exists, where  
\[B(\textbf{x})  = \sup_{\textbf{t }\in\Omega_{\textbf{x}}}{\dfrac{w_3(\textbf{t})K(\textbf{t})}{1-w_2(\textbf{t})} },\quad A(\textbf{x})  = \sup_{\textbf{t }\in \Omega_{\textbf{x}}}{\dfrac{w_1(\textbf{t}) + (1-\gamma)\gamma^{\frac{\gamma}{1-\gamma}}}{1-w_2(\textbf{t})} },\] 
 $\Vol(\Omega_{\textbf{t}})$ is the volume of a rectangular region $\Omega_{\textbf{t}},$ and
 \[
K(\textbf{x}) =\bigg(\frac{\alpha}{\alpha+\beta}\bigg)^{\frac{\alpha}{\alpha+\beta}}  \bigg(\int_{\Omega_{\textbf{x}}} (\Vol(\Omega_{\textbf{t}}))^{\alpha+\beta -1}\omega^{\frac{\alpha+\beta}{\beta}}(\textbf{t})  \Delta \textbf{t}\bigg)^{\frac{\beta}{\alpha+\beta}}\quad \text{for} \quad \textbf{x} \in \Omega.
 \]
\end{theorem}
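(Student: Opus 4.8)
The plan is to convert the integro-partial dynamic equation into a pointwise nonlinear integral inequality for $u:=\partial^{\boldsymbol1}y/\Delta\boldsymbol x^{\boldsymbol1}$ and then to resolve that inequality by a Bihari-type comparison. Since the initial data $y(\boldsymbol x)|_{x_j=a_j}=0$ annihilate all lower faces, the $\boldsymbol\lambda=\boldsymbol1$ representation used in the proof of Theorem \ref{THEO1} gives $y(\boldsymbol x)=\int_{\Omega_{\boldsymbol x}}u(\boldsymbol t)\,\Delta\boldsymbol t$, so it suffices to bound $|u|$ pointwise and integrate, using $y\le\int_{\Omega_{\boldsymbol x}}|u|\,\Delta\boldsymbol t$. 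Reading the equation as $u(\boldsymbol x)=\zeta(\boldsymbol x,u(\boldsymbol x),I(y(\boldsymbol x)))$ and invoking \eqref{ptzeta} yields $|u(\boldsymbol x)|\le w_1(\boldsymbol x)+w_2(\boldsymbol x)|u(\boldsymbol x)|^\gamma+w_3(\boldsymbol x)|I(y(\boldsymbol x))|$.

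The next step controls the integral term by the Opial-type estimate \eqref{ptmin1} of Corollary \ref{Co3.16}. Using the growth hypothesis in the form $|\Phi|\le\omega|y|^\beta|u|^\alpha$ and then applying \eqref{ptmin1} to the rectangle $\Omega_{\boldsymbol x}$ with $\boldsymbol\lambda=\boldsymbol1$ and $\tau\equiv1$ gives $|I(y(\boldsymbol x))|\le\int_{\Omega_{\boldsymbol x}}\omega|y|^\beta|u|^\alpha\,\Delta\boldsymbol t\le K(\boldsymbol x)\int_{\Omega_{\boldsymbol x}}|u|^{\alpha+\beta}\,\Delta\boldsymbol t$; here one uses $H_{\boldsymbol1}\equiv1$, so the inner integral defining $L_{\Omega_{\boldsymbol x}}$ collapses to $\Vol(\Omega_{\boldsymbol t})$ and the whole constant becomes exactly the $K(\boldsymbol x)$ of the statement. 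To dispose of the sublinear term I would use the scaled Young inequality $t^\gamma\le t+(1-\gamma)\gamma^{\gamma/(1-\gamma)}$ for $t\ge0$ (the $\varepsilon=1$ case), so that $w_2|u|^\gamma\le w_2|u|+(1-\gamma)\gamma^{\gamma/(1-\gamma)}$ since $w_2<1$. Moving $w_2|u|$ to the left, dividing by $1-w_2>0$, and bounding the resulting coefficients by their suprema over $\Omega_{\boldsymbol x}$ produces the clean inequality $|u(\boldsymbol x)|\le A(\boldsymbol x)+B(\boldsymbol x)\int_{\Omega_{\boldsymbol x}}|u|^{\alpha+\beta}\,\Delta\boldsymbol t$, with $A,B$ precisely as defined and nondecreasing in $\boldsymbol x$.

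It then remains to solve this Bihari-type inequality with exponent $p:=\alpha+\beta>1$. Fixing $\boldsymbol x$ and exploiting the monotonicity of $A,B$, I would set $R(\boldsymbol s):=A(\boldsymbol x)+B(\boldsymbol x)\int_{\Omega_{\boldsymbol s}}|u|^{p}\,\Delta\boldsymbol t$ for $\boldsymbol s\le\boldsymbol x$, so that $|u|\le R$, $R$ is positive and nondecreasing, $R$ reduces to $A(\boldsymbol x)$ on every lower face $\{s_j=a_j\}$, and $\partial^{\boldsymbol1}R/\Delta\boldsymbol s^{\boldsymbol1}=B(\boldsymbol x)|u|^{p}\le B(\boldsymbol x)R^{p}$. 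The candidate majorant is $\Psi(\boldsymbol s):=[A(\boldsymbol x)^{1-p}+(1-p)B(\boldsymbol x)\Vol(\Omega_{\boldsymbol s})]^{1/(1-p)}$; since $\Vol(\Omega_{\boldsymbol s})=\prod_j(s_j-a_j)$ has mixed delta derivative $1$, the power $\Psi^{1-p}$ has constant mixed derivative $(1-p)B(\boldsymbol x)$, and $\Psi^{1-p}=A(\boldsymbol x)^{1-p}$ on the lower faces. Thus, setting $W:=R^{1-p}-\Psi^{1-p}$ (which vanishes on all lower faces), the bound $R\le\Psi$ reduces to showing $\partial^{\boldsymbol1}(R^{1-p})/\Delta\boldsymbol s^{\boldsymbol1}\ge(1-p)B(\boldsymbol x)$: this makes $\partial^{\boldsymbol1}W/\Delta\boldsymbol s^{\boldsymbol1}\ge0$, whence the $\boldsymbol\lambda=\boldsymbol1$ representation $W(\boldsymbol x)=\int_{\Omega_{\boldsymbol x}}\partial^{\boldsymbol1}W/\Delta\boldsymbol t^{\boldsymbol1}\,\Delta\boldsymbol t\ge0$ gives $R^{1-p}\ge\Psi^{1-p}$, i.e. $|u(\boldsymbol x)|\le R(\boldsymbol x)\le\Psi(\boldsymbol x)$ (using that $t\mapsto t^{1/(1-p)}$ is decreasing). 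Integrating over $\Omega_{\boldsymbol x}$ then delivers \eqref{dgy}.

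The main obstacle is exactly this last differential inequality $\partial^{\boldsymbol1}(R^{1-p})/\Delta\boldsymbol s^{\boldsymbol1}\ge(1-p)B(\boldsymbol x)$. Unlike the single-variable case, the mixed $n$-th delta derivative of the composition $R^{1-p}$ is not governed by a one-line P\"otzsche chain rule but generates many cross terms involving lower-order partial delta derivatives of $R$ and the graininess functions $\mu_j$. In the continuous model these cross terms carry the favorable sign: for $n=2$ one computes $\partial_x\partial_y R^{1-p}=(1-p)R^{-p-1}[RR_{xy}-pR_xR_y]$, and since $R^{-p}R_{xy}\le B(\boldsymbol x)$, $pR^{-p-1}R_xR_y\ge0$, and $1-p<0$, the required inequality follows. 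The real work is to verify that this sign pattern survives on a general time scale, which I would carry out by expanding the mixed derivative one coordinate at a time and repeatedly invoking $R\ge A(\boldsymbol x)>0$, the nonnegativity of the partial delta derivatives of $R$, and $\partial^{\boldsymbol1}R/\Delta\boldsymbol s^{\boldsymbol1}\le B(\boldsymbol x)R^{p}$.
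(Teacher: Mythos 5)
Your proposal is correct and follows essentially the same route as the paper: the same reduction to $|u(\boldsymbol{x})|\le A(\boldsymbol{s})+B(\boldsymbol{s})\int_{\Omega_{\boldsymbol{x}}}|u|^{\alpha+\beta}\Delta\boldsymbol{t}$ via \eqref{ptmin1} and the elementary bound $t^\gamma\le t+(1-\gamma)\gamma^{\gamma/(1-\gamma)}$, followed by the same power-function comparison of $R$ with $[A^{1-\alpha-\beta}+(1-\alpha-\beta)B\Vol(\Omega_{\boldsymbol{z}})]^{1/(1-\alpha-\beta)}$. The mixed-derivative chain-rule inequality you single out as the main obstacle is precisely the step the paper itself asserts without justification (the line beginning ``Since $\int_{\Omega_{\boldsymbol{z}}}R^{-(\alpha+\beta)}\,\partial^{\boldsymbol{1}}R/\Delta\boldsymbol{t}^{\boldsymbol{1}}\,\Delta\boldsymbol{t}\ge\cdots$''), so your sketch is, if anything, more explicit than the paper about where the remaining work lies.
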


\begin{proof}
By applying inequality \eqref{ptmin1} with $\boldsymbol{\lambda} = \boldsymbol{1}, \tau \equiv 1$, we have
\begin{equation}\label{ptO1}
\int_{\Omega_{\boldsymbol{x}}} \omega(\boldsymbol{t})|y(\boldsymbol{t})|^\beta\bigg| \dfrac{\partial^{\boldsymbol{1}} y(\boldsymbol{t})}{\Delta  \boldsymbol{t}^{\boldsymbol{1}}} \bigg|^\alpha   \Delta \boldsymbol{t} \leq K(\boldsymbol{x}) \int_{\Omega_{\boldsymbol{x}}}\bigg| \dfrac{\partial^{\boldsymbol{1}} y(\boldsymbol{t})}{\Delta  \boldsymbol{t}^{\boldsymbol{1}}} \bigg|^{\alpha+\beta}   \Delta \boldsymbol{t}
\end{equation}
for $ \boldsymbol{x} \in \Omega.$
We consider the function $f(x) = x^\gamma- x -(1-\gamma)\gamma^{\frac{\gamma}{1-\gamma}}$ for $\gamma \in (0,1)$ and $x\geq 0$. We see that $f(x)$ obtains its maximum at $x =\gamma^{\frac{1}{1-\gamma}}$ and $f_{\text{max}} = 0.$ Then, we have
\begin{equation}\label{ptO2}
 \bigg| \dfrac{\partial^{\boldsymbol{1}} y(\boldsymbol{x})}{\Delta  \boldsymbol{x}^{\boldsymbol{1}}} \bigg|^{\gamma} \leq \bigg|\dfrac{\partial^{\boldsymbol{1}} y(\boldsymbol{x})}{\Delta  \boldsymbol{x}^{\boldsymbol{1}}}\bigg| + (1-\gamma)\gamma^{\frac{\gamma}{1-\gamma}}, \quad \boldsymbol{x} \in \Omega. 
\end{equation}
Substituting \eqref{ptO1} and \eqref{ptO2} into \eqref{ptzeta}, we obtain
\[(1-w_2(\boldsymbol{x}))  \bigg| \dfrac{\partial^{\boldsymbol{1}} y(\boldsymbol{x})}{\Delta  \boldsymbol{x}^{\boldsymbol{1}}} \bigg| \leq  w_1(\boldsymbol{x}) + (1-\gamma)\gamma^{\frac{\gamma}{1-\gamma}} +  w_3(\boldsymbol{x})K(\boldsymbol{x}) \int_{\Omega_{\boldsymbol{x}}}\bigg| \dfrac{\partial^{\boldsymbol{1}} y(\boldsymbol{t})}{\Delta  \boldsymbol{t}^{\boldsymbol{1}}} \bigg|^{\alpha+\beta}   \Delta \boldsymbol{t}     \]
for $ \boldsymbol{x} \in \Omega.$
This implies that
\begin{equation}\label{ptss}
\bigg| \dfrac{\partial^{\boldsymbol{1}} y(\boldsymbol{x})}{\Delta  \boldsymbol{x}^{\boldsymbol{1}}} \bigg| \leq \frac{w_1(\boldsymbol{x}) + (1-\gamma)\gamma^{\frac{\gamma}{1-\gamma}}}{1-w_2(\boldsymbol{x})}+ \frac{w_3(\boldsymbol{x})K(\boldsymbol{x})}{1-w_2(\boldsymbol{x})}\int_{\Omega_{\boldsymbol{x}}}\bigg| \dfrac{\partial^{\boldsymbol{1}} y(\boldsymbol{t})}{\Delta  \boldsymbol{t}^{\boldsymbol{1}}} \bigg|^{\alpha+\beta}   \Delta \boldsymbol{t}
\end{equation} 
for $ \boldsymbol{x} \in \Omega.$
Let $\boldsymbol{s} \in \Omega$  be arbitrary, but fixed; then inequality \eqref{ptss} gives
\begin{equation}\label{ptRR}
\bigg| \dfrac{\partial^{\boldsymbol{1} }y(\boldsymbol{t})}{\Delta  \boldsymbol{t}^{\boldsymbol{1}}} \bigg| \leq A(\boldsymbol{s}) + B(\boldsymbol{s})\int_{\Omega_{\boldsymbol{t}}}\bigg| \dfrac{\partial^{\boldsymbol{1}} y(\boldsymbol{u})}{\Delta  \boldsymbol{u}}^{\boldsymbol{1}} \bigg|^{\alpha+\beta}   \Delta \boldsymbol{u}, \quad \boldsymbol{t} \in \Omega_{\boldsymbol{s}}.   
\end{equation}
Next, let $R(\boldsymbol{t})$ be the right-hand side of  \eqref{ptRR}; then
\[ \dfrac{\partial^{\boldsymbol{1}} R(\boldsymbol{t})}{\Delta  \boldsymbol{t}^{\boldsymbol{1}}} = B(\boldsymbol{s}) \bigg| \dfrac{\partial^{\boldsymbol{1}} y(\boldsymbol{t})}{\Delta  \boldsymbol{t}^{\boldsymbol{1}}} \bigg|^{\alpha+\beta} \leq B(\boldsymbol{s}) R^{\alpha+\beta}(\boldsymbol{t}), \quad \boldsymbol{t} \in \Omega_{\boldsymbol{s}},   \]
where $ R|_{t_j = a_j} = A(\boldsymbol{s})$ for all $j \in [1, n]_{\mathbb{N}}$. Since
\[ \int_{\Omega_{\boldsymbol{z}}} \frac{1}{ {R^{\alpha+\beta}(\boldsymbol{t})} }\frac{\partial^{\boldsymbol{1}} R(\boldsymbol{t})}{\Delta  \boldsymbol{t}^{\boldsymbol{1}}} \Delta \boldsymbol{t} \geq \frac{1}{1-\alpha-\beta}[ R^{1-\alpha -\beta}(\boldsymbol{z})- A^{1-\alpha-\beta}(\boldsymbol{s}) ], \quad \boldsymbol{z} \in \Omega_{\boldsymbol{s}},    \]
then
\[ \frac{1}{1-\alpha-\beta}[ R^{1-\alpha -\beta}(\boldsymbol{z})- A^{1-\alpha-\beta}(\boldsymbol{s})] \leq B(\boldsymbol{s}) \Vol(\Omega_{\boldsymbol{z}}), \quad \boldsymbol{z} \in \Omega_{\boldsymbol{s}}.    \]
Therefore,
\[ \bigg|\dfrac{\partial^{\boldsymbol{1}} y(\boldsymbol{z})}{\Delta  \boldsymbol{z}^{\boldsymbol{1}}}\bigg| \leq  R(\boldsymbol{z}) \leq   [A^{1-\alpha-\beta}(\boldsymbol{s}) +(1-\alpha-\beta)B(\boldsymbol{s}) \Vol(\Omega_{\boldsymbol{z}}) ]^{\frac{1}{1-\alpha-\beta}}, \quad \boldsymbol{z} \in \Omega_{\boldsymbol{s}}. \]
In the above inequality replacing $\boldsymbol{z}$ by $\boldsymbol{s}$ and integrating both sides with respect
to $\boldsymbol{s}$ over $\Omega_{\boldsymbol{x}}$ for $\boldsymbol{x} \in \Omega$  we obtain \eqref{dgy}.\qed
\end{proof}

\begin{acknowledgements}
The author would like to express his deepest gratitude to  Assoc. Prof. Dinh Thanh Duc,  Prof. Vu Kim Tuan and Nguyen Du Vi Nhan  for their comments and suggestions  to improve this paper. 
\end{acknowledgements}



\end{document}